\documentclass[10pt]{article}

\usepackage{amsmath}
\usepackage{amsfonts}
\usepackage{amsthm}
\usepackage[english]{babel}
\usepackage{graphicx}
\usepackage[all]{xy}

\setlength{\topmargin}{0cm}
\setlength{\textheight}{22.5cm}
\setlength{\oddsidemargin}{0cm}
\setlength{\evensidemargin}{0cm}
\setlength{\textwidth}{16cm}

\thispagestyle{empty}

%%%%%%%%%%%%%%%%%%%%%%%%%%%%%%%%%%%%%%

\newtheorem{theorem}{Theorem}
\newtheorem{lemma}{Lemma}
\newtheorem{definition}{Definition}
\newtheorem{corollary}{Corollary}

\newtheorem{remark}{Remark}

\newcommand{\mR}{\mathbb{R}}
\newcommand{\mC}{\mathbb{C}}
\newcommand{\mN}{\mathbb{N}}

\newcommand{\mZ}{\mathbb{Z}}
\newcommand{\mS}{\mathbb{S}}

\newcommand{\mF}{\mathbb{F}}

\newcommand{\mg}{\mathfrak{g}}
%Calligraphic symbols are compactly denoted 
\newcommand{\cD}{\mathcal{D}}

\newcommand{\cC}{\mathcal{C}}

\newcommand{\cU}{\mathcal{U}}

\newcommand{\mn}{\mathfrak{n}}

%We use underlined symbols to denote vectors

%All commands with regards to derivatives

\newcommand{\osp}{\mathfrak{osp}(m|2n)}

\begin{document}
%%%%%%%%%%%%%%%%%%%%%%%%%%%%%%%%%%%%%%%%%%%%%%%%%%%%%%%%
\title{On a class of tensor product representations for the orthosymplectic superalgebra}

\author{Kevin\ Coulembier\thanks{Ph.D. Fellow of the Research Foundation - Flanders (FWO), E-mail: {\tt Coulembier@cage.ugent.be}}}

\date{\small{Department of Mathematical Analysis}\\
\small{Faculty of Engineering -- Ghent University\\ Krijgslaan 281, 9000 Gent,
Belgium}\\
}

\maketitle

\begin{abstract}
The spinor representations for $\osp$ are introduced. These generalize the spinors for $\mathfrak{so}(m)$ and the symplectic spinors for $\mathfrak{sp}(2n)$ and correspond to representations of the supergroup with supergroup pair $(Spin(m)\times Mp(2n),\osp)$. These spinor spaces are proved to be uniquely characterized as the completely pointed $\osp$-modules. The main aim is to study the tensor product of these representations with irreducible finite dimensional $\osp$-modules. Therefore a criterion for complete reducibility of tensor product representations of semisimple Lie superalgebras is derived. Finally the decomposition into irreducible $\osp$-representations of the tensor product of the super spinor space with an extensive class of such representations is calculated and also cases where the tensor product is not completely reducible are studied. 
\end{abstract}

\noindent
\textbf{MSC 2010 :}   17B10\\
\noindent
\textbf{Keywords :} orthosymplectic algebra, tensor product, complete reducibility, primitive vector

%\tableofcontents

\section{Introduction}

%notation $\odot$ supersymmetric tensor product

Tensor product representations of (super)groups play an important role in theoretical physics. See e.g. \cite{MR1632811, BG, MR1297597, MR2028498, MR0482879, MR1738448, MR2286881} for important results on such tensor products. In this paper we investigate the complete reducibility of tensor products of irreducible highest weight representations for Lie superalgebras and in particular the complete decomposition of a certain class of tensor product representations for the orthosymplectic superalgebra $\osp$. These generalize the tensor product representations of the spinor representations $\mS_m$ of $Spin(m)$ with finite dimensional $SO(m)$-representations. We introduce the spinor representation for $\osp$, $\mS_{m|2n}$, which is related to the oscillator realization of $\osp$ in \cite{MR1037401} or chapter 29 in \cite{MR1773773}. This space $\mS_{m|2n}$ corresponds to a representation of the Lie supergroup with supergroup pair $(Spin(m)\times Mp(2n),\osp)$ and generalizes both the $\mathfrak{so}(m)$-spinors and the symplectic spinors for $\mathfrak{sp}(2n)$ introduced in \cite{MR0400304}. As in the case of $\mathfrak{sp}(2n)$, the spinor spaces are the only completely pointed highest weight $\osp$-modules. This representation also appears in \cite{OSpHarm} in the context of Howe dualities for Lie superalgebras and as is proven in \cite{Joseph}, the annihilator ideal in the universal enveloping algebra $\cU(\mathfrak{osp}(m|2n))$ of this representation is a Joseph-type ideal. Therefore this representation has a certain interpretation as a minimal representation.

One specific motivation to study the decomposition of such tensor products comes from the study of first order invariant differential operators on supermanifolds. For the ungraded case, see e.g. \cite{MR2782791, MR0482879, MR1738448, MR2286881, MR0223492}. In the unified construction of $Spin(m)$-invariant generalized Cauchy-Riemann operators of Stein and Weiss (see \cite{MR0223492}), the tensor product of the fundamental representation of $\mathfrak{so}(m)$ with other $\mathfrak{so}(m)$-representations needs to be calculated. The same decomposition of tensor products is needed in the construction of Fegan of conformally invariant differential operators in \cite{MR0482879}. The case of the spinor representation leads to the Dirac operator and higher spinor representations lead to Rarita-Schwinger (or higher spin)-type operators, see e.g. \cite{MR2782791}. These constructions have already been made in case $\mathfrak{so}(m)$ is replaced by $\mathfrak{sp}(2n)$ in \cite{MR2286881}, using the symplectic (higher) spinor spaces from \cite{MR1615943}. To generalize the Dirac operator to superspace the tensor product of the spinor space $\mS_{m|2n}$ with the fundamental $\osp$-representation $L^{m|2n}_{\delta_1}$ is needed, which is a specific case of the results in this paper. In order to explicitly realize the higher spin operators on $\mR^m$ from \cite{MR0482879, MR0223492}, such as in \cite{MR2782791}, the tensor product of spaces of (simplicial) harmonics with spinors needs to be calculated. Based on the representation-theoretical results on spherical harmonics in superspace in \cite{OSpHarm, MR2395482} we find that the corresponding tensor products for $\osp$, which are needed to construct graded version of higher spinor operators, again correspond to special cases of the tensor product investigated in this paper. These tensor products are also necessary to study the $\osp$-representation structure of the kernel of the aforementioned super Dirac operator. 

Another reason to study these tensor products comes from the classification of representations with bounded dimensions of the weight spaces, see \cite{MR1297597, MR1615943, MR1013330}. In the non-graded case the tensor product of finite dimensional representations with completely pointed modules plays the essential role in the classification. As is shown in \cite{MR1615943}, the modules with bounded weight-multiplicities for $\mathfrak{sp}(2n)$ correspond exactly to the aforementioned symplectic higher spinor spaces, see \cite{MR2286881}. The higher spinor spaces for $\osp$, which are representations with bounded multiplicities, will appear in the tensor product decompositions in the current paper. 

In Section 9 of the paper \cite{MR2028498} the tensor product of certain infinite dimensional unitarizable $\osp$-representations was obtained from the theory of Howe duality. The paper \cite{MR1632811} is devoted to the tensor product of finite dimensional $\osp$-representations $\left(L^{m|2n}_{\delta_1}\right)^{\otimes k}$. There, the decomposition was studied based on the action of the Brauer algebra. Finite dimensional $\osp$-representations are never unitarizable (star-representations) except for $C(n)=\mathfrak{osp}(2,2n-2)$, see \cite{MR0424886}, so their tensor products are not necessarily completely reducible. In the current paper we investigate tensor products of a combination of these two types of $\osp$-representations. So we decompose the tensor product of an infinite dimensional unitarizable representation with finite dimensional representations into irreducible pieces. Again by lack of unitarizablility, it is not a priori known if the tensor products will be completely reducible. In Section \ref{criterion} we therefore derive a useful criterion for complete reducibility of tensor product representations of semisimple Lie superalgebras. This is based on insight into the structure of primitive vectors which can be obtained by using a notion which generalizes unitarizability. The tensor products of irreducible highest weight representations always satisfy this generalized unitarizability, which helps to determine their complete reducibility. Some of the tensor products we study do turn out to be not completely reducible. For these cases we obtain the complete decomposition series.

The paper is organized as follows. In Section \ref{sectionroot} we introduce our root system for $\osp$ which corresponds to the one in \cite{OSpHarm, MR2395482} and differs from the standard one in \cite{MR1773773, MR051963}. To make a distinction we use the notation $K^{m|2n}_\mu$ for the irreducible representation with highest weight $\mu$ in our root system and $L^{m|2n}_\lambda$ for the irreducible representation with highest weight $\lambda$ in the standard root system. All calculations and proofs will be performed in our choice but important results will be stated in both systems. Going from one system to the other uses the technique of odd reflections from \cite{MR1327543, MR1201236}, which will be explained in Section \ref{sectionrefl}. In section 3 we state the results of the spinor spaces and tensor products for the classical cases, these can be obtained from the results in \cite{MR1297597, MR1738448, MR1401053}. In Section \ref{criterion} we derive a criterion for complete reducibility of tensor products of semisimple Lie superalgebras. In Section \ref{sectionsuperspin}, we define the $\osp$-spinor spaces and show that they are uniquely determined as the completely pointed $\osp$-modules. Then the tensor products are studied. In Section \ref{sectionrestr} we find important restrictions on the possible primitive vectors. In Section \ref{secDec} we obtain the explicit decomposition for an extensive class of representations, but also investigate cases that are not completely reducible. Finally, in Section \ref{concl}, a brief overview of the main results is given and some logical directions for extensions of these results are discussed.

\section{Root systems for $\osp$}
\label{sectionroot}
In this section we introduce the orthosymplectic Lie superalgebra $\osp$ as the subalgebra of $\mathfrak{gl}(m|2n)$ which fixes the orthosymplectic metric. We always consider the complex algebra, so $\osp=\mathfrak{osp}(m|2n;\mC)$. In this section we will also introduce a non-standard choice of positive roots and simple roots and the corresponding Chevalley basis.

The orthosymplectic metric $g$ is given in block-matrix form by
\begin{eqnarray*}
g=\left( \begin{array}{cc}h&0\\  \vspace{-3.5mm} \\0&J \end{array} \right)&\mbox{with}& J=\left( \begin{array}{cc}0&I_n\\  \vspace{-3.5mm} \\-I_n&0\end{array} \right)
\end{eqnarray*}
where $h$ is
\begin{eqnarray}
\label{metricom}
\left( \begin{array}{cc}0&I_d\\  \vspace{-3.5mm} \\I_d&0 \end{array} \right)\quad\mbox{if}\quad m=2d &\mbox{and}& \left( \begin{array}{ccc}0&I_d&0\\  \vspace{-3.5mm} \\I_d&0&0\\  \vspace{-3.5mm} \\ 0&0&1 \end{array} \right)\quad\mbox{if}\quad m=2d+1.
\end{eqnarray}
The Lie superalgebra $\osp$ is given by the matrices $A\in\mC^{(m+2n)\times(m+2n)}=\mathfrak{gl}(m|2n;\mC)$ satisfying
\begin{eqnarray*}
A^{sT} g+gA=0&\mbox{with}& \left(\begin{array}{cc}a&b\\  \vspace{-3.5mm} \\c&d \end{array} \right)^{sT}=\left(\begin{array}{cc}a^T&-c^T\\  \vspace{-3.5mm} \\b^T&d^T \end{array} \right).
\end{eqnarray*}
The $\mZ_2$-gradation is induced by the embedding in $\mathfrak{gl}(m|2n)$, so matrices of the form $\left(\begin{array}{cc}a&0\\  \vspace{-3.5mm} \\0&d \end{array} \right)$ are homogeneous even elements and matrices of the form $\left(\begin{array}{cc}0&b\\  \vspace{-3.5mm} \\c&0 \end{array} \right)$ homogeneous odd elements. These two subspaces are denoted respectively by $\osp_0$ and $\osp_1$.

For a Lie superalgebra $\mathfrak{g}=\mathfrak{g}_0\oplus \mathfrak{g}_1$ with even part the underlying Lie algebra $\mathfrak{g}_0$ and odd part $\mathfrak{g}_1$, we use the notation $|X|=i\in \mZ_2$, if $X\in\mathfrak{g}_i$. We will always use the notation $[\cdot,\cdot]$ for the super Lie bracket. This super Lie bracket is super anti-symmetric, $[X,Y]=-(-1)^{|X||Y|}[Y,X]$ for $X,Y$ homogeneous, and satisfies that super-Jacobi identity, see e.g. \cite{MR1773773, MR051963}. This is modeled after the graded commutator on a general $\mZ_2$-graded algebra: $[X,Y]=XY-(-1)^{|X||Y|}YX$ for $X$ and $Y$ homogeneous elements of $\mathfrak{osp}(m|2n)\subset\mathfrak{gl}(m|2n)= \mC^{(m+2n)\times (m+2n)}$ and $XY$ seen as matrix multiplication.

A representation of a Lie superalgebra $\mathfrak{g}$ on a super vector space $V$ is a morphism of superalgebras between $\mathfrak{g}$ and the Lie superalgebra $End(V)$, see \cite{MR1773773}. Here $End(V)$ has a natural gradation induced by $V$ and the Lie superbracket is the graded commutator. The tensor product of two $\mathfrak{g}$-representations becomes a representation with action given by
\begin{eqnarray*}
X(a\otimes b)&=&Xa\otimes b+(-1)^{|X||a|}a\otimes Xb,
\end{eqnarray*}
for $X\in\mathfrak{g}$ homogeneous and $a$ and $b$ homogeneous vectors in the two representations.

The Cartan subalgebra generated by the diagonal matrices in $\osp$ will be denoted by $\mathfrak{h}$. The weights for $\osp$ (the space $\mathfrak{h}^\ast$) can be expressed in terms of $\epsilon_j$, $j=1,\cdots,d=\lfloor m/2\rfloor$ and $\delta_i$, $i=1,\cdots,n$. These are linear functionals on the space of diagonal matrices $\cD\subset\mC^{(m+2n)\times (m+2n)}$, defined by
\begin{eqnarray*}
\epsilon_j(E_{kk})=\delta_{k,j}&\mbox{for}&j=1,\cdots d\quad\mbox{and}\quad k=1,\cdots,m+2n\\
\delta_i(E_{kk})=\delta_{k,i+m}&\mbox{for}&i=1,\cdots n\quad\mbox{and}\quad k=1,\cdots,m+2n.
\end{eqnarray*} 
The matrix with notation $E_{pq}$ is the matrix satisfying $(E_{pq})_{st}=\delta_{sp}\delta_{tq}$ for all $1\le s,t\le m+2n$.

By restricting the definition from general diagonal matrices to $\mathfrak{h}$ we obtain elements of $\mathfrak{h}^\star$, which we also denote by $\epsilon_j$ and $\delta_i$. The (symmetric but non-definite) inner product on $\mathfrak{h}^\ast$ is given by $\langle\cdot,\cdot\rangle$
\begin{eqnarray}
\label{weightinprod}
\langle \epsilon_j,\epsilon_k\rangle=\frac{1}{2}\delta_{jk},\quad\langle \epsilon_j,\delta_i\rangle=0,\quad \langle \delta_i,\delta_l\rangle=-\frac{1}{2}\delta_{il}.
\end{eqnarray}

The fundamental weights of $\mathfrak{so}(m)$ and $\mathfrak{sp}(2n)$ are given in the following definition.
\begin{definition}
\label{classweights}
The fundamental weights of $\mathfrak{so}(2d+1)$ are given by $\omega_k=\sum_{j=1}^k\epsilon_j$ for $1\le k\le d-1$ and $\omega_d=\frac{1}{2}(\sum_{j=1}^d\epsilon_j)$. The fundamental weights of $\mathfrak{so}(2d)$ with $d>1$ are given by $\omega_k=\sum_{j=1}^k\epsilon_j$ for $k\le d-2$, $\omega_{d-1}=\frac{1}{2}(\sum_{j=1}^{d-1}\epsilon_j-\epsilon_d)$ and $\omega_d=\frac{1}{2}(\sum_{j=1}^d\epsilon_j)$. For $\mathfrak{sp}(2n)$ the fundamental weights are given by $\nu_l=\sum_{i=1}^l\delta_i$ for $1\le l\le n$. 
\end{definition}
We also use the notations $\omega_0=\nu_0=0$.

Next we introduce a non-standard Chevalley basis for $\osp$, this does not correspond to the distinguished basis of simple roots. We need to make a distinction between $m$ even and odd. 

\subsection{The case $\mathfrak{osp}(2d+1|2n)$}

The even roots of $B(d|n)=\mathfrak{osp}(2d+1|2n)$ are the roots of $\mathfrak{so}(2d+1)\oplus \mathfrak{sp}(2n)$. The odd roots are given by $\pm (\epsilon_j-\delta_i), \pm \delta_i, \pm (\epsilon_j+\delta_i)$ for $1\le j\le d$ and $1\le i\le n$. The standard choice of positive roots (see \cite{MR1773773, MR051963}) corresponds to the standard choice for $\mathfrak{so}(2d+1)\oplus \mathfrak{sp}(2n)$ and $\delta_i-\epsilon_j,\delta_i,\epsilon_j+\delta_i$ for the odd roots. %The distinguished choice of simple positive roots is then given by
%\begin{eqnarray*}
%\alpha_i=\delta_i-\delta_{i+1} \quad i=1,\cdots,n-1 &&\alpha_n=\delta_n-\epsilon_1 \\
%\alpha_{n+j}=\epsilon_j-\epsilon_{j+1}\quad j=1,\cdots,d-1&&\alpha_{n+d}=\epsilon_d.
%\end{eqnarray*}

Another useful choice of positive roots corresponds to the one above, except $\epsilon_j-\delta_i$ is chosen instead of $\delta_i-\epsilon_j$, see e.g. \cite{OSpHarm, MR2395482}. The choice of simple positive roots is then given by
\begin{eqnarray*}
\alpha_j=\epsilon_j-\epsilon_{j+1} \quad j=1,\cdots,d-1 &&\alpha_d=\epsilon_d-\delta_1 \\
\alpha_{d+i}=\delta_i-\delta_{i+1}\quad i=1,\cdots,n-1&&\alpha_{d+n}=\delta_n.
\end{eqnarray*}
Remark that here two simple positive roots are odd, $\alpha_d$ and $\alpha_{d+n}$.

The Chevalley basis corresponding to this root system is:
\begin{eqnarray}
\nonumber
X_{\alpha_j}&=&E_{j,j+1}-E_{d+j+1,d+j}\qquad \qquad j=1,\cdots,d-1\\
\label{posrootsbeta}
X_{\alpha_d}&=&E_{d,m+1}-E_{m+n+1,2d}\qquad \qquad Y_{\alpha_d}=-\left(E_{m+1,d}+E_{2d,m+n+1}\right)\\
\nonumber
X_{\alpha_{d+i}}&=&E_{m+i,m+i+1}-E_{m+n+i+1,m+n+i}\qquad i=1,\cdots,n-1\\
\nonumber
X_{\alpha_{d+n}}&=&E_{m,m+2n}+E_{m+n,m}\qquad \qquad Y_{\alpha_{d+n}}=-E_{m+2n,m}+E_{m,m+n}
\end{eqnarray}
and $Y_{\alpha_k}=X_{\alpha_k}^{T}$ for the even roots. The basis for $\mathfrak{h}$ than becomes
\begin{eqnarray*}
H_{\alpha_j}&=&E_{j,j}-E_{j+1,j+1}-E_{j+d,j+d}+E_{j+d+1,j+d+1} \quad \qquad j=1,\cdots,d-1\\
H_{\alpha_d}&=&E_{m+n+1,m+n+1}-E_{m+1,m+1}-E_{d,d}+E_{2d,2d}\\
H_{\alpha_{d+i}}&=&E_{m+i,m+i}-E_{m+i+1,m+i+1}-E_{m+n+i,m+n+i}+E_{m+n+i+1,m+n+i+1}\quad i=1,\cdots,n-1\\
H_{\alpha_{d+n}}&=&E_{m+n,m+n}-E_{m+2n,m+2n}.
\end{eqnarray*}
They satisfy the following relations:
\begin{eqnarray}
\nonumber
[X_{\alpha_k},Y_{\alpha_l}]&=&\delta_{kl}H_{\alpha_k}\qquad\qquad k,l=1,\cdots,d+n\\
\label{Cartanrel}
\,[H,X_{\alpha_k}]&=&\alpha_k(H)X_{\alpha_k}\qquad k=1,\cdots,d+n \qquad\mbox{and}\quad H\in\mathfrak{h}\\
\nonumber
\,[H,Y_{\alpha_k}]&=&-\alpha_k(H)Y_{\alpha_k}\qquad k=1,\cdots,d+n \qquad\mbox{and}\quad H\in\mathfrak{h}.
\end{eqnarray}

For $B(0|n)=\mathfrak{osp}(1|2n)$ with the identification $\mathfrak{o}(1)=\emptyset$, this choice of positive roots and simple roots $(\alpha_1,\cdots,\alpha_n)$ is identical to the standard distinguished basis, so $L^{1|2n}_\Lambda=K^{1|2n}_\Lambda$. 

We also give the positive simple roots for the standard positive root system:
\begin{eqnarray*}
\beta_i=\delta_i-\delta_{i+1} \quad i=1,\cdots,n-1 &&\beta_n=\delta_n-\epsilon_1 \\
\beta_{n+j}=\epsilon_j-\epsilon_{j+1}\quad j=1,\cdots,d-1&&\beta_{n+d}=\epsilon_d.
\end{eqnarray*}
\subsection{The case $\mathfrak{osp}(2d|2n)$}

The even roots of $D(d|n)=\mathfrak{osp}(2d|2n)$ are the roots of $\mathfrak{so}(2d)\oplus \mathfrak{sp}(2n)$. The odd roots are given by $\pm (\epsilon_j-\delta_i), \pm (\epsilon_j+\delta_i)$. The standard choice of positive roots (see \cite{MR1773773, MR051963}) corresponds to the standard choice for $\mathfrak{so}(2d)\oplus \mathfrak{sp}(2n)$ and $\delta_i-\epsilon_j,\epsilon_j+\delta_i$ for the odd roots. %The distinguished choice of simple positive roots is then given by
%\begin{eqnarray*}
%\alpha_i=\delta_i-\delta_{i+1} \quad i=1,\cdots,n-1 &&\alpha_n=\delta_n-\epsilon_1 \\
%\alpha_{n+j}=\epsilon_j-\epsilon_{j+1}\quad j=1,\cdots,d-1&&\alpha_{n+d}=\epsilon_{d-1}+\epsilon_d.
%\end{eqnarray*}
%Only the last simple root differs symbolically from the corresponding root for $\mathfrak{osp}(2d+1|2n)$.

Another useful choice of positive roots corresponds to the one above except again $\epsilon_j-\delta_i$ is chosen instead of $\delta_i-\epsilon_j$, see e.g. \cite{OSpHarm, MR2395482}. The choice of simple positive roots is then given by
\begin{eqnarray*}
\alpha_j=\epsilon_j-\epsilon_{j+1} \quad j=1,\cdots,d-1 &&\alpha_d=\epsilon_d-\delta_1 \\
\alpha_{d+i}=\delta_i-\delta_{i+1}\quad i=1,\cdots,n-1&&\alpha_{d+n}=2\delta_n.
\end{eqnarray*}
Only the last simple root differs symbolically from the corresponding root for $\mathfrak{osp}(2d+1|2n)$ and only one positive simple root is odd.

The positive and negative root vectors for the case $m=2d$ are given by the corresponding expressions as in the case $m=2d+1$ \eqref{posrootsbeta} with now identification $m=2d$, except for
\begin{eqnarray*}
X_{\alpha_{d+n}}=E_{m+n,m+2n}&&Y_{\alpha_{d+n}}=E_{m+2n,m+n}.
\end{eqnarray*}
Also the corresponding bases for the Cartan subalgebra are of the exact same form as in the case $m=2d+1$. The Chevalley-basis again satisfies relations \eqref{Cartanrel}.

For the case $C(n+1)=\mathfrak{osp}(2|2n)$, the standard distinguished choice of positive roots and simple roots in \cite{MR1773773, MR051963} is equal to the one made here. So there is no need to pay attention to the choice where $\delta_i-\epsilon$ would be considered as a positive root. The notation `$L^{2|2n}_{\Lambda}$' will therefore not be used, representations will always be denoted by $K^{2|2n}_{\Lambda}$.

We also give the positive simple roots for the standard positive root system:
\begin{eqnarray*}
\beta_i=\delta_i-\delta_{i+1} \quad i=1,\cdots,n-1 &&\beta_n=\delta_n-\epsilon_1 \\
\beta_{n+j}=\epsilon_j-\epsilon_{j+1}\quad j=1,\cdots,d-1&&\beta_{n+d}=\epsilon_{d-1}+\epsilon_d.
\end{eqnarray*}

\section{Spinor representations for $\mathfrak{so}(m)$ and $\mathfrak{sp}(2n)$}
\label{classicalspinor}

In this section we recall some facts about the spinor representations for the complex Lie algebras $\mathfrak{so}(m)$ and $\mathfrak{sp}(2n)$. The explicit realizations of these representations will not be repeated since they can be deduced from the spinor representations for $\osp$ in Section \ref{sectionsuperspin}.

The irreducible representation with highest weight $\lambda$ will be denoted by $L^{m|0}_\lambda$ for $\mathfrak{so}(m)$ and by $L_\lambda^{0|2n}$ for $\mathfrak{sp}(2n)$, in correspondence with the notation $L^{m|2n}_\Lambda$ for $\osp$-representations.

The spinor representations for $\mathfrak{so}(m)$ with $m>2$ are realizations of $\mathfrak{so}(m)$ as differential operators on the Grassmann algebra $\Lambda_{d}$ generated by $d=\lfloor m/2\rfloor$ anti-commuting variables. For $m=2d+1$ this Grassmann algebra is an irreducible module with highest weight $\omega_d$ (see Definition \ref{classweights}) and we write
\begin{eqnarray*}
\mS_{2d+1|0}\cong L^{2d+1|0}_{\omega_d}&\cong&\Lambda_{d}.
\end{eqnarray*}
For $\mathfrak{so}(2d)$ the Grassmann algebra splits up into the spaces of elements with even and odd number of generators, then
\begin{eqnarray*}
\mS_{2d|0}=\mS_{2d|0}^+\oplus\mS_{2d|0}^-\cong L_{\omega_d}^{2d|0}\oplus L_{\omega_{d-1}}^{2d|0}\cong\Lambda_d.
\end{eqnarray*}
All of these representations satisfy the fact that all weight spaces have dimension one, see the subsequent equation \eqref{weightmonomials}. They exponentiate to representations of the spin group $Spin(m)$, the double cover of the special orthogonal group $SO(m)$.

An $\mathfrak{so}(2d+1)$-weight $\lambda =\sum_{j=1}^dk_j\epsilon_j$ is integral if either all $k_j$ are integers or half integers. In order to be a dominant weight
\begin{eqnarray*}
k_1\ge k_2\ge\cdots\ge k_{d-1}\ge k_d\ge 0
\end{eqnarray*}
needs to hold.
%k_1\ge k_2\ge\cdots\ge k_{d-1} \ge |k_d|&\mbox{if}&m=2d

We can write a weight in terms of the fundamental weights, see Definition \ref{classweights}. For an integral (not necessarily dominant) $\mathfrak{so}(2d+1)$-weight $\lambda=\sum_{j=1}^dk_j\epsilon_j$ we introduce the (not necessarily positive) integers $\lambda_j$, $j=1,\cdots,d$ defined by $\lambda=\sum_{j=1}^d\lambda_j\omega_j$. This implies
\begin{eqnarray*}
\lambda_j=k_j-k_{j-1}\quad\mbox{for}\quad 1\le j\le d-1\quad\mbox{and}\quad\lambda_d=2k_d.
\end{eqnarray*}
For an integral dominant $\mathfrak{so}(2d+1)$-weight $\lambda$, the $\lambda_j$ are non-negative and we define the following set of weights
\begin{eqnarray}
\label{setweightsclass}
I_\lambda&=&\{\sum_{j=1}^di_j\epsilon_j=\sum_{j=1}^d\mu_j\omega_j\,|\,0\le i_j\le 1\quad\mbox{and}\quad\mu_j\le \lambda_j\quad\mbox{for}\quad 1\le j\le d\}.
\end{eqnarray}

The condition on an $\mathfrak{so}(2d)$-weight $\lambda=\sum_{j=1}^dk_j\epsilon_j$ to be dominant is different, the property $k_1\ge k_2\ge \cdots \ge k_{d-1}\ge |k_d|$ must hold. This corresponds to the different fundamental weights in Remark \ref{classweights}. The equality $\sum_{j=1}^dk_j\epsilon_j=\sum_{j=1}^d\lambda_j\omega_j$ then leads to the same expression for $\lambda_j$ as in the case $\mathfrak{so}(2d+1)$ except $\lambda_{d}=k_{d-1}+k_d$. However, the weights which are relevant in this paper will satisfy $k_d\ge 0$ and also the property that $k_{d}=0$ implies $k_{d-1}=0$. For those weights $\lambda$, the set \eqref{setweightsclass} is identical if $\lambda_{d}$ is defined as $2k_d$ or $k_{d-1}+k_d$. We will always assume that integral dominant $\mathfrak{so}(2d)$-weight satisfies these properties.

%\label{setweightsclass2}
%J_\lambda&=&\{\sum_{j=1}^di_j\epsilon_j=\sum_{j=1}^d\mu_j\omega_j\,|\,0\le i_j\le 1\mbox{ for } 1\le j\le d,\,\, \mu_j\le \lambda_j\mbox{ for }1\le j\le d-1,\mbox{ } i_{d-1}\le k_{d-1}\}.
%\end{eqnarray}
%We also define $J_\lambda^{0}=\{\sum_{j=1}^di_j\epsilon_j\in J_\lambda|\,(\sum_{j=1}^di_j)\mbox{ is even}\}$ and $J_\lambda^{1}=\{\sum_{j=1}^di_j\epsilon_j\in J_\lambda|\,(\sum_{j=1}^di_j)\mbox{ is odd}\}$.

The decomposition of the tensor product of the spinor representations with finite dimensional modules $L^{m|0}_{\lambda}$, $\lambda=\sum_j k_j\epsilon_j$ with $k_j$ integers, can be calculated using Klimyk's formula, as is done in detail in theorem 1 in \cite{MR2782791} for $L_{k\epsilon_1+l\epsilon_2}^{m|0}$. However, the spinor representations are exactly the miniscule representations for $\mathfrak{so}(m)$. This implies that the decomposition of the tensor product of $L_{\lambda}^{m|0}$ with a spinor representation is given by a multiplicity free decomposition with highest weights given by the sum of $\lambda$ with the weights appearing in the spinor representation, such that the resulting weight is dominant. A written proof of this can be found in  lemma 11 in \cite{MR1401053} for the case $\mathfrak{gl}(l,\mF_p)$, see also proposition 3.7 in \cite{MR1738448}. This approach is used in the following theorem for $\mathfrak{so}(2d+1)$.

\begin{theorem}
\label{classdecomp}
For an integral dominant $\mathfrak{so}(m)$-highest weight $\lambda=\sum_{j=1}^dk_j\epsilon_j$, with $d=\lfloor m/2\rfloor$, $k_d$ an integer (and $k_{d-1}=0$ if $k_{d}=0$ in case $m=2d$), the decomposition
\begin{eqnarray*}
\mS_{m|0}\otimes L^{m|0}_{\lambda}&=&\bigoplus_{\mu \in I_\lambda} L^{m|0}_{\lambda-\mu+\omega_d}
\end{eqnarray*}
holds for $I_\lambda$ in equation \eqref{setweightsclass}.
%For an integral dominant $\mathfrak{so}(2d)$-highest weight $\lambda=\sum_{j=1}^dk_j\epsilon_j$, with $k_d$ an integer and $k_d\ge 0$, the decompositions
%\begin{eqnarray*}
%L^{2d|0}_{\omega_d}\otimes L^{2d|0}_{\lambda}&=&\bigoplus_{\mu \in J^0_\lambda} L^{2d|0}_{\lambda-\mu+\omega_d}\\
%L^{2d|0}_{\omega_{d-1}}\otimes L^{2d|0}_{\lambda}&=&\bigoplus_{\mu \in J^1_\lambda} L^{2d|0}_{\lambda-\mu+\omega_d}
%\end{eqnarray*}
%hold for $J_\lambda$ in equation \eqref{setweightsclass2}.
\end{theorem}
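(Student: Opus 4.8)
The plan is to exploit the fact, stated just before the theorem, that the spinor representation $\mS_{m|0}$ is \emph{minuscule} for $\mathfrak{so}(m)$: all its weights lie in a single Weyl group orbit, each with multiplicity one, and they are precisely the $\sum_{j=1}^d i_j\epsilon_j$ with $i_j\in\{-\tfrac12,+\tfrac12\}$ (for $m=2d+1$ the full Grassmann algebra $\Lambda_d$, for $m=2d$ the two half-spinors together). The key general principle I would invoke is that tensoring any finite dimensional irreducible $L^{m|0}_\lambda$ with a minuscule representation produces a multiplicity-free decomposition whose highest weights are exactly those $\lambda+\kappa$ that happen to be dominant, where $\kappa$ runs over the weights of the minuscule module. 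This is the content of the cited results (lemma 11 in \cite{MR1401053}, proposition 3.7 in \cite{MR1738448}), so I would begin by recording it as the input lemma rather than reproving it.

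First I would parametrize the spinor weights. Writing each weight of $\mS_{m|0}$ as $\omega_d-\eta$ where $\eta=\sum_{j=1}^d i_j\epsilon_j$ with $i_j\in\{0,1\}$ (since $\omega_d=\tfrac12\sum_j\epsilon_j$ and subtracting $\epsilon_j$ flips the sign of that coordinate from $+\tfrac12$ to $-\tfrac12$), the candidate highest weights of the tensor product are $\lambda+\omega_d-\eta$. So the decomposition becomes a sum over those $\eta\in\{0,1\}^d$ for which $\lambda-\eta+\omega_d$ is dominant, and the claim is precisely that this dominance condition is equivalent to $\eta\in I_\lambda$ as defined in \eqref{setweightsclass}. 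The core of the proof is therefore a purely combinatorial translation.

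Next I would carry out that translation. The weight $\lambda-\eta+\omega_d=\sum_j(k_j-i_j+\tfrac12)\epsilon_j$ is $\mathfrak{so}(2d+1)$-dominant iff its $\epsilon$-coordinates are weakly decreasing and the last is nonnegative; the last condition is automatic since $k_d\ge 0$ and $i_d\le 1$ give $k_d-i_d+\tfrac12\ge-\tfrac12$, which for a half-integer coordinate forces it to be $\ge\tfrac12>0$. The monotonicity condition $k_j-i_j\ge k_{j+1}-i_{j+1}$ for all $j$ is exactly what I must show matches the condition ``$\mu_j\le\lambda_j$ for all $j$'' after converting $\eta=\sum_j i_j\epsilon_j$ to fundamental-weight coordinates via $\mu_j=i_j-i_{j-1}$ (and $\mu_d=2i_d$), using the formulas $\lambda_j=k_j-k_{j+1}$ recorded before the theorem. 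Since $\mu_j\le\lambda_j$ rearranges to $k_j-i_j\ge k_{j+1}-i_{j+1}$ (and the $j=d$ inequality becomes the nonnegativity already handled), the two descriptions of the index set coincide. I would then note that multiplicity-freeness is inherited directly from the minuscule input lemma, so no weight appears twice.

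The main obstacle will be the boundary/endpoint bookkeeping rather than any deep idea: correctly handling the half-integral shift by $\omega_d$, the conversion between the $\epsilon$-basis and the fundamental-weight basis at the two ends $j=1$ and $j=d$, and in the $m=2d$ case checking that the hypotheses ($k_d\ge0$, and $k_{d-1}=0$ whenever $k_d=0$) guarantee the resulting weights are genuinely $\mathfrak{so}(2d)$-dominant (i.e.\ $|k_d-i_d+\tfrac12|$ does not spoil the last inequality) and that the set $I_\lambda$ is insensitive to whether $\lambda_d$ is read as $2k_d$ or $k_{d-1}+k_d$, exactly as the paragraph preceding the theorem asserts. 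Once that combinatorial equivalence $\{\eta:\lambda-\eta+\omega_d\ \mathrm{dominant}\}=I_\lambda$ is pinned down, the theorem follows immediately from the minuscule tensor-product lemma.
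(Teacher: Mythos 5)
Your overall strategy is the same as the paper's: invoke the minuscule tensor-product lemma (with the same citations) and then translate dominance of $\lambda+\omega_d-\eta$ into the defining conditions of $I_\lambda$. The translation of the monotonicity conditions for $j<d$ is fine. The genuine error is in your treatment of the last coordinate. You assert that nonnegativity of the $d$-th coordinate is automatic because $k_d-i_d+\tfrac12\ge-\tfrac12$ and ``for a half-integer coordinate'' this forces it to be $\ge\tfrac12$. That deduction is false: a half-integer that is $\ge-\tfrac12$ can equal $-\tfrac12$, and this value is attained precisely when $k_d=0$ and $i_d=1$. Concretely, for $m=3$ and $\lambda=0$ your argument leaves no constraint at all on $\eta$ (there are no $j<d$ conditions when $d=1$), and therefore yields two ``summands'' $L^{3|0}_{\omega_1}$ and $L^{3|0}_{-\frac12\epsilon_1}$ in $\mS_{3|0}\otimes L^{3|0}_0$, the second of which is not even dominant; the correct answer is the single summand $L^{3|0}_{\omega_1}$, and $I_0=\{0\}$ rules out $\eta=\epsilon_1$ exactly through the $j=d$ condition $\mu_d\le\lambda_d$ that you dismissed as vacuous.

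The repair is the parity argument the paper uses: dominance at $j=d$ reads $k_d-i_d+\tfrac12\ge0$, and since the left-hand side is a half-integer this is equivalent to $k_d-i_d+\tfrac12\ge\tfrac12$, i.e.\ to $k_d\ge i_d$, i.e.\ to $\mu_d=2i_d\le 2k_d=\lambda_d$ (in the paper's phrasing, $2k_d+1\ge 2i_d$ is equivalent to $2k_d\ge 2i_d$ for integers $k_d,i_d$). So the $j=d$ constraint in $I_\lambda$ is a genuine, non-vacuous constraint, and it is exactly equivalent to the dominance condition; neither side is automatic. With this one step corrected (and analogous care taken for $m=2d$, where the last dominance condition is $k_{d-1}-i_{d-1}+\tfrac12\ge|k_d-i_d+\tfrac12|$), your argument coincides with the paper's proof. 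A smaller slip: your conversion $\mu_j=i_j-i_{j-1}$ should read $\mu_j=i_j-i_{j+1}$, consistently with $\lambda_j=k_j-k_{j+1}$, as your own rearrangement one line later implicitly uses.
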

\begin{proof}
We consider $m=2d+1$, which implies $\mS_{m|0}\cong L^{m|0}_{\omega_d}$. As argued above, the weights appearing in the summation of the decomposition are those of the form $\lambda+\nu$ such that 
\begin{itemize}
\item$\nu$ is a weight appearing in the spinor space $L^{2d+1|0}_{\omega_d}$
\item $\lambda+\nu$ is dominant.
\end{itemize}
The weights appearing in $L^{2d+1|0}_{\omega_d}$ are those of the form $\omega_d-\sum_{j=1}^d i_j\epsilon_j$ with $0\le i_j\le 1$. This can for instance be concluded immediately from the subsequent equation \eqref{weightmonomials}. Now $\lambda+\omega_d-\mu$ with $\mu=\sum_{j=1}^di_j\epsilon_j=\sum_{j=1}^d\mu_j\omega_j$ is dominant if and only if
\begin{eqnarray}
\label{domconditie}
\lambda_j+\delta_{j,d}-\mu_j&\ge&0\qquad\mbox{for}\quad 1\le j\le d.
\end{eqnarray}
This corresponds to the conditions in equation \eqref{setweightsclass} when taking into account that the last relation $2k_d+1\ge 2 i_d$ for $k_d$ and $i_d$ integers is equivalent with $\lambda_d=2k_d\ge 2 i_d=\mu_d$.

The proof for $m=2d$ is similar.
\end{proof}
%For the case it is more elegant to consider the tensor product $(L^{2d|0}_{\omega_d}\oplus L^{2d|0}_{\omega_{d-1}})\otimes L^{2d|0}_{\lambda}$ first. The weights appearing in $L^{2d|0}_{\omega_d}\oplus L^{2d|0}_{\omega_{d-1}}$ are again those of the form $\omega_d-\sum_{j=1}^d i_j\epsilon_j$ with $0\le i_j\le 1$. This is again dominant if and only if equation \eqref{domconditie} holds. The last condition now becomes
%\begin{eqnarray*}
%i_{d-1}+i_d\le k_{d-1}+k_d +1.
%\end{eqnarray*}
%Taking into account the second to last condition in equation \eqref{domconditie} this is equivalent with $k_{d-1}\le i_{d-1}$. The proof is then completed by identifying which weights appear in $L^{2d|0}_{\omega_d}$ and which in $L^{2d|0}_{\omega_{d-1}}$.

Similarly, the symplectic spinors can be realized as the space of polynomials in $n$ commuting variables, $\mR[t_1,\cdots,t_n]$, see \cite{MR0400304} or \cite{MR1297597}. The symplectic algebra $\mathfrak{sp}(2n)$ is then realized as differential operators in $n$ commuting variables. This corresponds to the Segal-Shale-Weil representation of the metaplectic group $Mp(2n)$, the double cover of the symplectic group $Sp(2n)$. The algebra of polynomials $\mR[t_1,\cdots,t_n]$ decomposes into two irreducible highest weight representations, corresponding to the even and odd polynomials:
\begin{eqnarray*}
\mS_{0|2n}=\mS_{0|2n}^+\oplus\mS_{0|2n}^-\cong L^{0|2n}_{-\frac{1}{2}\nu_{n}}\oplus L^{0|2n}_{\nu_{n-1}-\frac{3}{2}\nu_n}&\cong&\mR[t_1,\cdots,t_n].
\end{eqnarray*}
Contrary to the orthogonal spinor representations, these are infinite dimensional. Theorem 3 in \cite{MR1297597} states that these representations are the only completely pointed (infinite dimensional with all weight spaces having dimension one) highest weight modules for $\mathfrak{sp}(2n)$. For $\mathfrak{so}(m)$ no such completely pointed modules exist.
\begin{theorem}
\label{classcompp}
The only irreducible completely pointed highest weight modules for $\mathfrak{sp}(2n)$ are given by $L^{0|2n}_{-\frac{1}{2}\nu_{n}}$ and $L^{0|2n}_{\nu_{n-1}-\frac{3}{2}\nu_n}$. There are no irreducible completely pointed highest weight modules for $\mathfrak{so}(m)$.
\end{theorem}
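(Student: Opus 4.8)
The plan is to treat the two assertions separately, since the symplectic case is a direct application of an existing classification while the orthogonal case is a nonexistence statement. For $\mathfrak{sp}(2n)$ I would first check the easy direction, that the two symplectic spinor modules really are completely pointed, and then invoke the classification of completely pointed modules to get that they are the only ones. For $\mathfrak{so}(m)$ I would argue that a completely pointed highest weight module would in particular be an infinite-dimensional weight module with uniformly bounded weight multiplicities, and then rule this out using the general classification of such modules.

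For the forward direction I would use the explicit realization $\mS_{0|2n}\cong\mR[t_1,\cdots,t_n]$ recalled above, in which the Cartan acts diagonally on monomials so that $t_1^{a_1}\cdots t_n^{a_n}$ is a weight vector of weight $-\sum_{i=1}^n(a_i+\tfrac12)\delta_i$. Since the $\delta_i$ are linearly independent, the map $(a_1,\cdots,a_n)\mapsto -\sum_i(a_i+\tfrac12)\delta_i$ is injective, so distinct monomials carry distinct weights and every weight space is one-dimensional; as the space is infinite-dimensional this already shows it is completely pointed. Splitting $\mR[t_1,\cdots,t_n]$ into its even and odd parts according to the parity of $\sum_i a_i$ yields two submodules with disjoint weight supports, which are exactly the irreducibles $L^{0|2n}_{-\frac{1}{2}\nu_n}$ and $L^{0|2n}_{\nu_{n-1}-\frac{3}{2}\nu_n}$ of the statement (the constant $1$ and the variable $t_n$ being the respective highest weight vectors, with weights matching $-\tfrac12\nu_n$ and $\nu_{n-1}-\tfrac32\nu_n$). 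That these are the \emph{only} completely pointed highest weight $\mathfrak{sp}(2n)$-modules is precisely Theorem 3 of \cite{MR1297597}, which I would cite rather than reprove.

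For the orthogonal statement the crucial observation is that ``completely pointed'' forces every weight multiplicity to equal $1$, hence to be uniformly bounded. The classification of irreducible weight modules with finite and uniformly bounded weight multiplicities (\cite{MR1013330, MR1297597, MR1615943}) shows that a simple Lie algebra carries an infinite-dimensional module of this kind only in types $A$ and $C$; for genuine type $B$ ($m=2d+1\ge 7$) and type $D$ ($m=2d\ge 8$) none exists, so a fortiori there is no completely pointed highest weight module. The genuinely hard input is this classification; everything else is bookkeeping with the explicit model. The point I would be most careful about — and what I expect to be the real obstacle — is the behaviour under the low-rank accidental isomorphisms $\mathfrak{so}(3)\cong\mathfrak{sp}(2)$, $\mathfrak{so}(4)\cong\mathfrak{sl}(2)\oplus\mathfrak{sl}(2)$, $\mathfrak{so}(5)\cong\mathfrak{sp}(4)$ and $\mathfrak{so}(6)\cong\mathfrak{sl}(4)$, where $\mathfrak{so}(m)$ is not honestly of type $B$ or $D$. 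For $m=6$ the isomorphic algebra is of type $A$, whose bounded infinite-dimensional modules are torsion-free and hence carry no highest weight vector, so the statement still holds; but for $m=3,4,5$ the algebra is (isogenous to) a symplectic or $\mathfrak{sl}(2)$-type algebra that \emph{does} admit completely pointed highest weight modules, so these values must be excluded and the statement understood with a suitable lower bound on $m$.
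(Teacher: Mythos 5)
Your treatment of the $\mathfrak{sp}(2n)$ half is correct and coincides with what the paper actually does: the paper offers no proof of Theorem \ref{classcompp} at all, but quotes the uniqueness statement as Theorem 3 of \cite{MR1297597}, and the complete pointedness of the two oscillator halves is exactly the monomial-weight computation you give (it appears in the paper only later, as equation \eqref{weightmonomials} with $d=0$). One labelling remark: in the paper's conventions the highest weight vector of the odd half is $t_1$, not $t_n$; your reversed index convention is internally consistent, so this is harmless.

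The genuine gap is in the $\mathfrak{so}(m)$ half, namely your claim that in type $A$ every infinite-dimensional simple weight module with bounded multiplicities is torsion-free and hence carries no highest weight vector. That is false. For $\mathfrak{sl}(r)$, $r\ge 2$, fix $a\in\mC\setminus\mZ$ and let $V_a$ be the span of the monomials $x_1^{a-k_2-\cdots-k_r}x_2^{k_2}\cdots x_r^{k_r}$ with $k_2,\cdots,k_r\in\mN$, on which $\mathfrak{gl}(r)\supset\mathfrak{sl}(r)$ acts by $E_{ij}\mapsto x_i\partial_{x_j}$. This is a simple highest weight module (highest weight vector $x_1^a$, killed by all $E_{ij}$ with $i<j$), it is infinite dimensional, and every $\mathfrak{sl}(r)$-weight multiplicity equals one; it is bounded but visibly not torsion-free. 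Transporting it through $\mathfrak{so}(6)\cong\mathfrak{sl}(4)$ shows that $\mathfrak{so}(6)$ \emph{does} admit irreducible completely pointed highest weight modules, so $m=6$ is a counterexample on exactly the same footing as $m=3,4,5$, contrary to your conclusion that the statement "still holds" there. With this correction, your (valid and worthwhile) observation becomes: the nonexistence claim holds precisely for $m\ge 7$, where $\mathfrak{so}(m)$ is of genuine type $B$ or $D$ and the bounded-multiplicity classification of \cite{MR1013330, MR1615943} applies, and trivially for $m\le 2$, while it fails for $3\le m\le 6$ through the accidental isomorphisms. The paper states the theorem for all $m$ without proof, so this caveat is not merely pedantic; it also touches the place where the theorem is invoked, namely the proof of Theorem \ref{supercompp} for $2<m\le 6$.
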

Also the decomposition of tensor products of finite dimensional $\mathfrak{sp}(2n)$-representations with the symplectic spinor spaces is calculated in \cite{MR1297597}. %For an integral $\mathfrak{sp}(2n)$ weight $\mu=\sum_{i=1}^n$ we define

%\begin{theorem}
%\begin{eqnarray*}
%L^{0|2n}_{-\frac{1}{2}\nu_{n}}\otimes L^{0|2n}_{\nu_{j}}&=&L^{0|2n}_{\nu_j-\frac{1}{2}\nu_{n}}\oplus L^{0|2n}_{\nu_{j-1}+\nu_{n-1}-\frac{3}{2}\nu_{n}}\\
%L^{0|2n}_{\nu_{n-1}-\frac{3}{2}\nu_{n}}\otimes L^{0|2n}_{\nu_{j}}&=&L^{0|2n}_{\nu_j+\nu_{n-1}-\frac{3}{2}\nu_{n}}\oplus L^{0|2n}_{\nu_{j-1}-\frac{1}{2}\nu_{n}}
%\end{eqnarray*}
%\end{theorem}

\section{Irreducible highest weight $\osp$-representations}
\label{sectionrefl}

The irreducible representation $V$ of $\osp$ with unique highest weight $\Lambda\in\mathfrak{h}^\ast$, in the standard choice of positive roots from \cite{MR051963}, will be denoted by $L_\Lambda^{m|2n}$. That same representation is also an irreducible highest weight module with respect to our choice of positive roots given in Section \ref{sectionroot}. The highest weight is different with respect to this choice of positive roots, $\mu$, and we denote the representation also by $K_\mu^{m|2n}$. Calculating $\mu$ from $\Lambda$ and vice versa can be done using the technique of odd reflections from \cite{MR1327543, MR1201236}. For the cases $C(n)=\mathfrak{osp}(2|2n-2)$ and $B(0|n)=\mathfrak{osp}(1|2n)$ this is not necessary, as explained in Section \ref{sectionroot}.

In order to generalize Theorem \ref{classdecomp} we are interested in finite dimensional representations of the form $K^{m|2n}_{\lambda}$ with $\lambda=\sum_{j=1}^dk_j\epsilon_j$ an integral dominant $\mathfrak{so}(m)$-weight. Since we want $K^{m|2n}_{\lambda}$ to be finite dimensional, the $k_j$ need to be integers, not half integers.

We need to know what the highest weight of these representations is in the standard choice of positive roots. The case $K^{m|2n}_{k\epsilon_1}$ (for both $m=2d$ and $m=2d+1$) was already obtained in Remark 8 in \cite{OSpHarm},
\begin{equation}
\label{KLold}
K^{m|2n}_{k\epsilon_1}=\begin{cases}
L^{m|2n}_{\nu_k}&\mbox{if}\quad k\le n\\
L^{m|2n}_{(k-n)\epsilon_1+\nu_n}&\mbox{if}\quad k> n.
\end{cases}
\end{equation}

For the more general case we use the method of odd reflections. We start from one choice of positive roots for which we have an irreducible representation with unique highest weight $\Lambda$. The procedure from \cite{MR1327543, MR1201236} describes that if we replace one positive odd root $\alpha$ by its negative $-\alpha$ the highest weight of the representation, with respect to the new choice of positive roots, becomes $\Lambda -\alpha$ if $\langle\Lambda,\alpha\rangle\not=0$ and stays $\Lambda$ if $\langle\Lambda,\alpha\rangle=0$.

Going from our choice of positive roots to the standard choice corresponds to switching (in this order)
\begin{eqnarray}
\label{orderrefl}
\epsilon_d-\delta_1, \epsilon_d-\delta_2,\cdots,\epsilon_d-\delta_n,\epsilon_{d-1}-\delta_1,\cdots,\epsilon_1-\delta_n.
\end{eqnarray}

For completeness we will give the corresponding highest weights in both root systems for all irreducible finite dimensional highest weight $\osp$-modules, the proof is an direct calculation using the technique of odd reflections. We again assume that $k_d\ge 0$ holds and that $k_{d}=0$ implies $k_{d-1}=0$ for $\mathfrak{so}(2d)$, with notations explained in the theorem.
\begin{theorem}
\label{changeroot}
Each finite dimensional $\osp$-representation $K^{m|2n}_{\mu}$ with
\begin{eqnarray*}
\mu&=&\sum_{j=1}^dk_j\epsilon_j+\sum_{i=1}^nl_i\delta_i\qquad\mbox{(where $l_{k_d+1}=0$ must hold if $k_d<n$ by consistency)},
\end{eqnarray*}
is identical to the highest weight representation $L^{m|2n}_\Lambda$ with
\begin{eqnarray*}
\Lambda&=&\sum_{j=1}^a(k_j-n)\epsilon_j+\sum_{i=1}^nl_{i}\delta_i+\sum_{j=a+1}^d\nu_{k_j}+a\nu_n,
\end{eqnarray*}
where $a$ is defined as the largest integer such that $k_a\ge n$. So in particular $a=0$ means $k_1< n$ and $a=d$ means $k_d \ge n$.
\end{theorem}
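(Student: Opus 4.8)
The plan is to compute $\Lambda$ from $\mu$ by applying the sequence of odd reflections in \eqref{orderrefl} one at a time, tracking how the highest weight changes at each step according to the rule stated just before the theorem: replacing a positive odd root $\alpha$ by $-\alpha$ sends the highest weight $\nu$ to $\nu-\alpha$ when $\langle\nu,\alpha\rangle\neq 0$, and leaves it unchanged when $\langle\nu,\alpha\rangle=0$. The reflections come grouped by the $\epsilon_j$: first all of $\epsilon_d-\delta_1,\ldots,\epsilon_d-\delta_n$, then all of $\epsilon_{d-1}-\delta_i$, and so on down to $\epsilon_1-\delta_i$. Using the inner product \eqref{weightinprod}, for a current weight $\nu=\sum_j k'_j\epsilon_j+\sum_i l'_i\delta_i$ one has $\langle \nu,\epsilon_j-\delta_i\rangle=\tfrac12(k'_j+l'_i)$, so the vanishing condition is precisely $k'_j+l'_i=0$, i.e. $l'_i=-k'_j$. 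This is the arithmetic engine of the whole computation.

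First I would isolate the effect of processing the single block $\epsilon_j-\delta_1,\epsilon_j-\delta_2,\ldots,\epsilon_j-\delta_n$ for a fixed $j$, assuming the $\delta$-coefficients entering this block are $l'_1\ge l'_2\ge\cdots$ in the relevant dominant pattern and the $\epsilon_j$-coefficient is some integer $k_j\ge 0$. The claim I would verify by induction on $i$ is that, starting from coefficient $k_j$ on $\epsilon_j$, each reflection by $\epsilon_j-\delta_i$ either fires (subtracting $\epsilon_j-\delta_i$, which lowers the $\epsilon_j$-coefficient by $1$ and raises $l'_i$ by $1$) or is inert exactly when the running $\epsilon_j$-coefficient has already reached the value $-l'_i$. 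The net outcome should be: if $k_j\ge n$, every reflection in the block fires, the $\epsilon_j$-coefficient drops from $k_j$ to $k_j-n$ and the $\delta$-part absorbs a full $+\nu_n$; if $k_j<n$, the reflections fire only until the $\epsilon_j$-coefficient meets the $\delta$-coefficients, producing a partial contribution that I expect to reorganize as $+\nu_{k_j}$ together with the fact (guaranteed by the consistency hypothesis $l_{k_d+1}=0$) that the $\delta$-coefficient at the meeting index vanishes, so the reflections correctly stall there.

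Next I would assemble the blocks. Because $a$ is the largest index with $k_a\ge n$ and the $k_j$ are weakly decreasing, the blocks split cleanly: for $j\le a$ the whole block fires and contributes $(k_j-n)\epsilon_j$ to the $\epsilon$-part plus $\nu_n$ to the $\delta$-part (summing to the $a\nu_n$ and the $\sum_{j=1}^a(k_j-n)\epsilon_j$ in the target), while for $j>a$ the block contributes $\nu_{k_j}$ and removes the $\epsilon_j$-component entirely. The one genuinely delicate bookkeeping point is that the $\delta$-coefficients are modified cumulatively as one descends through $j=d,d-1,\ldots,1$, so I must confirm that after processing the block for index $j$ the new $\delta$-coefficients remain in the weakly-decreasing configuration assumed when processing $j-1$, and that the running $\epsilon_{j-1}$-coefficient $k_{j-1}\ge k_j$ interacts with these updated $\delta$-values exactly as the formula predicts.

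The main obstacle I anticipate is precisely this propagation: verifying that the $\delta$-coefficients built up from higher-index blocks do not disturb the firing pattern of lower-index blocks, and that the cumulative $\sum_{j>a}\nu_{k_j}+a\nu_n$ really equals the final $\delta$-part. I would handle it by carrying the induction over the pair (block index $j$, current full weight) rather than treating each block in isolation, using the monotonicity $k_1\ge\cdots\ge k_d$ and the hypothesis on $k_d$ to guarantee dominance is preserved throughout, which keeps the vanishing-of-inner-product condition $l'_i=-k'_j$ triggering at exactly the predicted index in every block.
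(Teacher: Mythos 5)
Your proposal is correct and takes essentially the same approach as the paper: the paper's proof consists of the single remark that the statement follows by a direct calculation with odd reflections applied in the order \eqref{orderrefl}, and your block-by-block analysis is exactly that calculation carried out in detail. Your handling of the two delicate points --- that the stalling of a block at index $k_j+1$ requires $l'_{k_j+1}=0$ (via the consistency hypothesis and $k_j\ge k_d$), and that previously processed blocks $j'>j$ only fire at indices $\le k_{j'}\le k_j$ so they cannot disturb this --- is precisely the bookkeeping the paper leaves implicit.
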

The consistency condition `$l_{k_d+1}=0$ must hold if $k_d<n$' on $\mu$ can be derived immediately from applying the procedure of odd reflections. We need to check that the resulting highest weight $\Lambda$ satisfies the consistency conditions on a highest weight in the distinguished root system given in e.g. chapter 36 in \cite{MR1773773} or in \cite{MR051963}. For $\mathfrak{osp}(2d+1|2n)$, a dominant weight $\Lambda=\sum_{j}t_j\epsilon_j+\sum_{i} s_i\delta_i$ must satisfy $t_{s_n+1}=0$ if $s_n<d$. In our case $s_n=a+l_n$, if $s_n<d$ this implies $a<d$ and therefore $l_n=0$, so in that case $s_n=a$ holds and $t_{a+1}$ is clearly zero. So the outcome of the technique of odd reflections exactly gives all the consistent weights. The consistency conditions for $\mathfrak{osp}(2d|2n)$ holds because of the assumption that $k_d\ge 0$ holds and that $k_{d}=0$ implies $k_{d-1}=0$.

For the case of interest in this paper Theorem \ref{changeroot} yields
\begin{eqnarray}
\label{KL}
K_{\sum_{j=1}^dk_j\epsilon_j}^{m|2n}&=&L^{m|2n}_{\sum_{j=1}^a (k_j-n)\epsilon_j+\sum_{j=a+1}^d \nu_{k_j}+a\nu_n},
\end{eqnarray}
as an extension of equation \eqref{KLold}.

\section{Complete reducibility}
\label{criterion}

In this section we consider a basic classical Lie superalgebra $\mathfrak{g}$ and derive a criterion for the complete reducibility of the tensor product of two irreducible (not necessarily finite dimensional) highest weight representations. The root space decomposition of $\mg$ is given by $\mathfrak{n}^++\mathfrak{h}+\mathfrak{n}^-$ and $M_\lambda$ denotes the irreducible highest weight representation with highest weight $\lambda$. The set of positive roots is given by $\Delta^+\subset\mathfrak{h}^\ast$. For each positive root $\alpha\in\Delta^+$ we fix the positive root vector $X_\alpha\in\mathfrak{n}^+$ and the negative root vector $Y_\alpha\in\mathfrak{n}^-$.

Classically each finite dimensional representation of a semisimple complex Lie algebra has a contravariant inner product, induced from the invariant inner product of the compact real form. This implies complete reducibility of the tensor products. In this section we mainly investigate the consequences of the non-degenerate contravariant hermitian form that we construct for irreducible representations of Lie superalgebras.

The approach we take makes a link between primitive vectors of a certain representation $W$ and vectors which can not be obtained from the action of negative root vectors on other vectors in $W$. The representation $W$ is completely reducible if and only if the space of vectors which can be obtained from the action of negative root vectors, denoted by $\mathfrak{n}^-\cdot W$ and the space of primitive vectors denoted by $A(\mathfrak{n}^+)$ satisfy
\begin{eqnarray*}
\mathfrak{n}^-\cdot W\,\oplus \, A(\mathfrak{n}^+)&=&W.
\end{eqnarray*}
So when the representation is completely reducible, the primitive vectors can not be obtained from action of negative root vectors and such primitive vectors are known as maximal vectors. Even though the tensor product is not always completely reducible we will be able to prove that the dimensions of these spaces (when restricted to a weight space $W_\nu$) still satisfy the corresponding property. 

To express the results more elegantly we introduce the notations $\left(\mathfrak{n}^-\cdot W\right)_\nu=(\mathfrak{n}^-\cdot W)\cap W_\nu$ and $A(\mathfrak{n}^+)_\nu=A(\mathfrak{n}^+)\cap W_\nu$ for each weight $\nu$ appearing in $W$.

\begin{theorem}
\label{dimensequal}
Consider the tensor product $W=M_\lambda\otimes M_\mu$ of two irreducible highest weight modules of the Lie superalgebra $\mg$. When restricting to a certain weight space $W_\nu$, the dimension of the space of primitive vectors is equal to the codimension of the space of vectors that can be obtained from the action of negative root vectors on other vectors in $W$. If $W$ is assumed to have bounded multiplicities this can be expressed as
\begin{eqnarray*}
\dim\left(A(\mathfrak{n}^+)_\nu\right)+\dim\left(\left(\mathfrak{n}^-\cdot W\right)_\nu\right)&=&\dim W_\nu.
\end{eqnarray*}
\end{theorem}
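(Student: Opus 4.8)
The plan is to recast the identity as an orthogonality statement for a suitable non-degenerate contravariant Hermitian form on $W$, under which the space of primitive vectors and the image $\mathfrak{n}^-\cdot W$ become exact orthogonal complements inside each weight space. Once that is in place, the dimension identity is immediate from the orthogonal-complement dimension formula for a non-degenerate form on a finite-dimensional space. The two facts that carry the argument are therefore: (i) the existence of such a form, and (ii) the equality $A(\mathfrak{n}^+)_\nu=\left((\mathfrak{n}^-\cdot W)_\nu\right)^\perp$.

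First I would equip $W=M_\lambda\otimes M_\mu$ with a non-degenerate contravariant Hermitian form. Each irreducible highest weight module $M_\lambda$ carries a contravariant Hermitian form (a super-analogue of the Shapovalov form), and since $M_\lambda$ is irreducible its radical, being a submodule, vanishes, so the form is non-degenerate. Taking the super tensor product of the forms on $M_\lambda$ and $M_\mu$ yields a Hermitian form $\langle\cdot,\cdot\rangle$ on $W$ which is again non-degenerate. The property to verify is contravariance with respect to the combined action, namely that $\langle X_\alpha v,w\rangle$ and $\langle v,Y_\alpha w\rangle$ agree up to the appropriate super-sign; this follows from contravariance on each factor together with the coproduct rule $X(a\otimes b)=Xa\otimes b+(-1)^{|X||a|}a\otimes Xb$, once the signs are matched. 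Because $\mathfrak{h}$ acts self-adjointly, distinct weight spaces are mutually orthogonal, so non-degeneracy of the global form restricts to non-degeneracy of $\langle\cdot,\cdot\rangle$ on each weight space $W_\nu$, and the bounded-multiplicity hypothesis guarantees each $W_\nu$ is finite dimensional.

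Next I would establish $A(\mathfrak{n}^+)_\nu=\left((\mathfrak{n}^-\cdot W)_\nu\right)^\perp$, the orthogonal complement taken inside $W_\nu$. For the inclusion into the complement: if $v\in W_\nu$ is primitive and $w=Y_\alpha u\in(\mathfrak{n}^-\cdot W)_\nu$ with $u\in W_{\nu+\alpha}$, then contravariance gives $\langle v,Y_\alpha u\rangle=\pm\langle X_\alpha v,u\rangle=0$ for every $\alpha\in\Delta^+$, so $v$ is orthogonal to $(\mathfrak{n}^-\cdot W)_\nu$. For the reverse inclusion: if $v\in W_\nu$ is orthogonal to every such $Y_\alpha u$, then $\langle X_\alpha v,u\rangle=0$ for all $u\in W_{\nu+\alpha}$, and since $X_\alpha v\in W_{\nu+\alpha}$, non-degeneracy of the form on $W_{\nu+\alpha}$ forces $X_\alpha v=0$; as this holds for every positive root, $v$ is primitive. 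Applying the formula $\dim U+\dim U^\perp=\dim W_\nu$ with $U=(\mathfrak{n}^-\cdot W)_\nu$ then yields the claimed identity.

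The main obstacle is the construction in the second step: producing a genuinely non-degenerate contravariant Hermitian form on the tensor product in the super setting, where one must pin down the correct anti-linear anti-involution (the \emph{generalized unitarizability} alluded to above), track the super-signs so that contravariance of the product form holds, and confirm that non-degeneracy is inherited from the factors. The irreducible super modules need not be unitarizable, so the form will in general be indefinite; this is precisely why an orthogonal-complement dimension count, rather than an orthogonal direct-sum decomposition of $W_\nu$, is the correct tool, and why the conclusion is a dimension equality rather than complete reducibility.
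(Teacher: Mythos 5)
Your proposal is correct and follows essentially the same route as the paper: the paper's proof also builds the contravariant Hermitian form on $W$ as the (super-signed) product of the non-degenerate Shapovalov-type forms on $M_\lambda$ and $M_\mu$ (its Lemma~\ref{lemmaBilForm}, where non-degeneracy comes from irreducibility exactly as you argue), and then identifies $A(\mathfrak{n}^+)$ with $\left(\mathfrak{n}^-\cdot W\right)^\perp$ weight space by weight space to get the dimension count. Your write-up merely makes explicit two steps the paper leaves implicit, namely the mutual orthogonality of distinct weight spaces and the two-sided inclusion argument for the orthogonal complement.
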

\begin{proof}
The representation $W$ has a non-degenerate hermitian form $(\cdot,\cdot)$, in fact non-degenerate on each weight space, such that for each negative root vector $Y_\alpha$ with corresponding positive root vector $X_\alpha$, the relation
\begin{eqnarray*}
(Y_\alpha x,y)&=&(-1)^{|\alpha||x|}(x,X_\alpha y)
\end{eqnarray*}
holds for homogeneous vectors $x,y\in W$. This hermitian form is given by the product of the hermitian forms on $M_\lambda$ and $M_\mu$ in the subsequent Lemma \ref{lemmaBilForm}, $(a\otimes b,c\otimes d)=(-1)^{|b||c|}(a,c)(b,d)$, for $a,c\in M_\lambda$ and $b,d\in M_\mu$.

Using the fact that $\mathfrak{n}^-\cdot W=$Span$\{Y_\alpha x| x\in W,\,\alpha\in \Delta^+\}$ and the non-degeneracy it follows immediately that the vector space
\begin{eqnarray*}
\left(\mathfrak{n}^-\cdot W\right)^\perp&:=& \{y\in W|(y,z)=0,\,\forall z\in \mathfrak{n}^-\cdot W\}
\end{eqnarray*}
is equal to $A(\mathfrak{n}^+)$. Since $(\cdot,\cdot)$ is non-degenerate on each weight space the conclusion on the dimensions follows immediately.
\end{proof}

%Now we restrict to the weight space $W_\nu$ for some weight $\nu \le \lambda+\mu$. For each primitive vector $w\in W_\nu$, the property
%\begin{eqnarray*}
%(Y_\alpha u|w)&=&0 \mbox{ holds, for every } u\in W_{\nu+\alpha} \mbox{ and every } \alpha \mbox{ a positive root}.
%\end{eqnarray*}
%Since $(\cdot,\cdot)$ is non-degenerate on each weight space, the codimension inside $W_\nu$ of the space generated by such elements $Y_\alpha u$ is exactly the dimension of the space of primitive vectors inside $W_\nu$.

\begin{remark}
Theorem \ref{dimensequal} is non-trivial since it does not hold for general weight representations, even for ordinary Lie algebras. For example, in a Verma module $V$ the codimension of $\mathfrak{n}^-\cdot V$ is one while the dimension of the space of primitive vectors can be higher.
\end{remark}

\begin{corollary}
\label{ThmComRed}
Consider the tensor product $W=M_\lambda\otimes M_\mu$ of two irreducible highest weight representations of the Lie superalgebra $\mg$. If the dimension of the space of primitive vectors of $W$ is a finite number $p$ and there is a basis of primitive vectors $\{v_j^+, j=1,\cdots,p\}$ (which are of weight $\lambda_j$) such that
\begin{eqnarray*}
 v_j^+&\not\in&\cU(\mg)\cdot v_k^+ \qquad\mbox{for } j,k=1,\cdots,p\quad\mbox{with}\quad j\not=k,
\end{eqnarray*} 
then $W$ is completely reducible and $W\cong\bigoplus_{j=1}^pM_{\lambda_j}$.
\end{corollary}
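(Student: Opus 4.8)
The plan is to deduce the statement from Theorem \ref{dimensequal} together with the contravariant form, thereby reducing complete reducibility to a non-degeneracy statement, and then to use the hypothesis to control the submodules generated by the primitive vectors.

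First I would recall from the discussion preceding Theorem \ref{dimensequal} that $W$ is completely reducible if and only if $\mathfrak{n}^-\cdot W\oplus A(\mathfrak{n}^+)=W$, and that Theorem \ref{dimensequal} already supplies the identity $\dim A(\mathfrak{n}^+)_\nu+\dim(\mathfrak{n}^-\cdot W)_\nu=\dim W_\nu$ on every weight space. Hence it suffices to prove $A(\mathfrak{n}^+)\cap(\mathfrak{n}^-\cdot W)=0$. Since the proof of Theorem \ref{dimensequal} identifies $A(\mathfrak{n}^+)=(\mathfrak{n}^-\cdot W)^\perp$ and the form is non-degenerate on each weight space, one also gets $\mathfrak{n}^-\cdot W=A(\mathfrak{n}^+)^\perp$, so the intersection $A(\mathfrak{n}^+)\cap(\mathfrak{n}^-\cdot W)$ is exactly the radical of the contravariant form restricted to $A(\mathfrak{n}^+)$. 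Thus everything reduces to showing that this form is non-degenerate on the finite-dimensional space of primitive vectors. I would also note that once complete reducibility is known the isomorphism type is forced: the multiplicity of $M_\sigma$ in a completely reducible module equals the number of its highest weight vectors of weight $\sigma$, i.e. $\dim A(\mathfrak{n}^+)_\sigma=\#\{j:\lambda_j=\sigma\}$, so $W\cong\bigoplus_{j=1}^p M_{\lambda_j}$ automatically.

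To prove the non-degeneracy I would argue by downward induction on the finitely many weights $\rho_1>\cdots>\rho_s$ carrying primitive vectors, maintaining the invariant that the submodule $A$ generated by all primitive vectors of weight strictly greater than $\rho$ is completely reducible, a direct sum of the irreducible modules $\mathcal{U}(\mathfrak{g})v_j^+\cong M_{\lambda_j}$ with $\lambda_j>\rho$, on which the form is non-degenerate; consequently $W=A\oplus A^\perp$ is an orthogonal direct sum of modules and $A^\perp$ contains every primitive vector of weight $\le\rho$ and has maximal weight $\rho$. Two clean facts drive the induction. First, the top weight space of a module with non-degenerate contravariant form again carries a non-degenerate form, because weight spaces of different weights are orthogonal, so a top-weight null vector is null on the whole module; this gives non-degeneracy of the form on $A(\mathfrak{n}^+)_\rho=(A^\perp)_\rho$. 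Second, a top-weight vector of $A^\perp$ cannot lie in $\mathfrak{n}^-\cdot A^\perp$ and hence not in $\mathfrak{n}^-\cdot W=\mathfrak{n}^-\cdot A\oplus\mathfrak{n}^-\cdot A^\perp$, which is the desired vanishing at weight $\rho$. To propagate the invariant one splits off $C=\mathcal{U}(\mathfrak{g})\,(A^\perp)_\rho$, the submodule generated by the primitive vectors of weight $\rho$; here the hypothesis $v_j^+\notin\mathcal{U}(\mathfrak{g})v_k^+$ is used to show that each $\mathcal{U}(\mathfrak{g})v_j^+$ is irreducible and that distinct ones are independent (if $\mathcal{U}(\mathfrak{g})v_j^+$ were reducible it would contain a further primitive vector, while $\mathcal{U}(\mathfrak{g})v_j^+\cap\mathcal{U}(\mathfrak{g})v_k^+\neq0$ would force $v_j^+\in\mathcal{U}(\mathfrak{g})v_k^+$), so that $C\cong\bigoplus_{\lambda_j=\rho}M_\rho$ is completely reducible and non-degenerate and $A$ may be enlarged by $C$.

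The main obstacle I expect is precisely the case of several primitive vectors sharing the same weight $\rho$. The hypothesis as stated controls the individual basis vectors $v_j^+$, whereas the induction needs that no \emph{linear combination} of the weight-$\rho$ vectors $v_j^+$ is absorbed into the submodule generated by the remaining primitive vectors, equivalently that the submodule $C$ they generate is completely reducible rather than merely of finite length with non-degenerate top. Upgrading the single-generator hypothesis to arbitrary combinations is the delicate point, and it is where the explicit tensor-product form of Theorem \ref{dimensequal} must be exploited; when the weights $\lambda_j$ are distinct this difficulty disappears, the form on $A(\mathfrak{n}^+)$ is block-diagonal with one-dimensional blocks, and the induction goes through verbatim. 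A secondary technical point, used repeatedly, is the existence of maximal-weight (hence primitive) vectors in any nonzero subquotient, which is guaranteed by the weights of $W$ being bounded above with finite-dimensional weight spaces.
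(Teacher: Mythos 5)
Your route is related to, but genuinely different from, the paper's, and it is sound precisely in the case where the weights $\lambda_j$ are pairwise distinct. The paper never splits $W$ orthogonally: it works directly with $V=\bigoplus_{j}\cU(\mg)\cdot v_j^+$, first claiming each $\cU(\mg)\cdot v_j^+$ is irreducible, then proving by descending induction on the (assumed totally ordered) weights, via the count of Theorem \ref{dimensequal}, that every $v_j^+$ is maximal ($v_j^+\notin\mathfrak{n}^-\cdot W$), and finally showing $V=W$ because a maximal-weight vector of $W\setminus V$ would be an extra vector outside $\mathfrak{n}^-\cdot W$, violating Theorem \ref{dimensequal}. Your reformulation of complete reducibility as non-degeneracy of the contravariant form on $A(\mathfrak{n}^+)$, together with the inductive orthogonal splitting $W=A\oplus A^\perp$, is a stronger invariant, and your two ``clean facts'' are correct and do the work that the paper's counting does; both proofs consume Theorem \ref{dimensequal} at the same spot. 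Two caveats: (i) like the paper, you treat the primitive weights as totally ordered, which they need not be; the induction should run along a linear refinement of the dominance order (harmless, since $\mathfrak{n}^-$ only moves weights strictly downward in the partial order); (ii) your opening reduction leans on the unproved ``if and only if'' assertion preceding Theorem \ref{dimensequal}, and the direction you need (the direct sum forces complete reducibility) is not free --- it is essentially the maximal-weight argument occupying the last paragraph of the paper's proof --- so you should prove it rather than cite it.

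The obstacle you flag for coinciding weights is a genuine gap in your argument as written, but note that the paper's own proof has exactly the same one. Its opening claim, that $\cU(\mg)\cdot v_k^+$ is ``the quotient of a Verma module with only one primitive vector,'' does not follow from the stated hypothesis when some weight $\rho$ has $\dim A(\mathfrak{n}^+)_\rho\ge 2$: the hypothesis forbids each \emph{basis} vector from lying in $\cU(\mg)\cdot v_k^+$, but not a combination such as $v_1^++v_2^+$; such a combination would be a second primitive vector inside $\cU(\mg)\cdot v_k^+$, it lies in $A(\mathfrak{n}^+)\cap\mathfrak{n}^-\cdot W$, and its existence is compatible with the hypothesis while making $W$ not completely reducible. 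So in the presence of multiplicities the corollary really requires a strengthened hypothesis (for instance: no nonzero element of $A(\mathfrak{n}^+)$ lies in the submodule generated by the primitive vectors of other weights), under which both your induction and the paper's argument close up. For the paper this is immaterial: in every application (Theorems \ref{decomp2d} and \ref{finalthmdecomp} via Lemma \ref{highestnotzero}, and Theorem \ref{Simplecasencr}, which assumes distinct weights outright) there is at most one primitive vector per weight --- exactly the case your proof handles completely.
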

\begin{proof}
Each representation $\cU(\mg)\cdot v_k^+ $ is irreducible since it is the quotient of a Verma module with only one primitive vector. Therefore $\cU(\mg)\cdot v_j^+\cong M_{\lambda_j}$. It is then also clear that $\cU(\mg)\cdot v_j^+\cap \cU(\mg)\cdot v_k^+=0$ if $j\not=k$.
It remains to be proved that the representation
\begin{eqnarray*}
V&=&\bigoplus_{j=1}^n\cU(\mg)\cdot v_j^+\cong \bigoplus_{j=1}^nM_{\lambda_j}
\end{eqnarray*}
corresponds to the entire representation $W$. 

First we prove that the $p$ primitive vectors are all vectors which can not be obtained from the action of negative root vectors on other vectors in $W$, i.e. $v_j^+\not\in \mathfrak{n}^-\cdot W$, or every primitive vector is a maximal one. We choose the ordering of the $v_j^+$ in a way that $\lambda+\mu=\lambda_1\ge \lambda_2\ge\cdots\ge\lambda_p$ holds. Obviously the vector $v_1^+$, which corresponds to the product of the maximal vectors of $M_\lambda$ and $M_\mu$, is not in $\mathfrak{n}^-\cdot W$. If the vector $v_2^+$ (or $v_j^+$ with $\lambda_j=\lambda_2$) is generated by $\mathfrak{n}^-$-action on vectors with higher weights, there need to be vectors in $W$ of weight higher than $\lambda_2$, that are not in $\cU(\mathfrak{n}^-)\cdot v_1^+$, since we already know that $v_2^+\not\in\cU(\mathfrak{n}^-)\cdot v_1^+$. In the set of vectors in $W\backslash V$ of weight higher than $\lambda_2$, we take the one with highest weight. This vector is not in $\mathfrak{n}^-\cdot W$, but this is a contradiction with Theorem \ref{dimensequal} because there is only one primitive vector with weight higher than $\lambda_2$ and there is already a vector ($v_1^+$) of weight higher than $\lambda_2$ which is not in $\mathfrak{n}^-\cdot W$. Therefore $v_2^+$ is not in $\mathfrak{n}^-\cdot W$. Continuing this procedure until $v_p^+$ shows that all $v_j^+\not\in$ $\mathfrak{n}^-\cdot W$.

Now, if $W\backslash V\not=\emptyset$, we can take a highest weight vector in this set, which we denote by $x$. This vector can not be generated by action of $\mathfrak{n^-}$ on higher weight vectors, since all higher weight vectors are inside the representation $V$. This implies that the dimension of the space of vectors in $W$ that can not be obtained from action of $\mathfrak{n}^-$ is at least $p+1$, while the dimension of the space of primitive vectors is only $p$. This contradicts Theorem \ref{dimensequal}, so $x$ does not exist, which yields $V=W$.
\end{proof}

\begin{remark}
Corollary \ref{ThmComRed} does not hold for general weight representations. Contrary to Theorem \ref{dimensequal}, it still holds if $W$ is replaced by the quotient of a Verma module of a Lie (super)algebra. An easy example of a weight representation $W$ that does not satisfy Corollary \ref{ThmComRed} is given by taking the quotient of the tensor product in Theorem \ref{tensornotcr} with respect to the irreducible subrepresentation.
\end{remark}

Now we start to construct the contravariant hermitian form needed in the proof of Theorem \ref{dimensequal}, therefore we use the following definition.
\begin{definition}
\label{n+n-}
The anti-involution $\tau:\mg\to\mg$ is defined by $\tau(X_\alpha)=Y_\alpha$ and extended to the universal enveloping algebra. In particular for $f(\mathfrak{n}^-)\in\cU(\mathfrak{n}^-)$, of the form $f(\mathfrak{n}^-)=Y_\alpha g(\mathfrak{n}^-)$, $\tau$ satisfies
\begin{eqnarray*}
\tau\left(Y_\alpha g(\mathfrak{n}^-)\right)&=&(-1)^{|\alpha||g|}\tau\left(g(\mathfrak{n}^-)\right)X_\alpha.
\end{eqnarray*}
\end{definition}

The contravariant hermitian form can then be obtained from a Harish-Chandra morphism \[\cU(\mg)/(\mn^-\cU(\mg)+\cU(\mg)\mn^+)\to\cU(\mathfrak{h}).\] This is done explicitly in the following lemma.

\begin{lemma}
\label{lemmaBilForm}
An irreducible highest weight module $M_\lambda$ of a Lie superalgebra $\mg$ has a non-degenerate hermitian form $(\cdot,\cdot)$, such that for each negative root vector $Y_\alpha$ with corresponding positive root vector $X_\alpha$, the relation
\begin{eqnarray*}
(Y_\alpha x,y)&=&(-1)^{|\alpha||x|}(x,X_\alpha y)
\end{eqnarray*}
holds for homogeneous vectors $x,y\in M_\lambda$. The form is also non-degenerate when restricted to each weight space of $M_\lambda$.
\end{lemma}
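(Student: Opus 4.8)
The plan is to construct the hermitian form explicitly via the Harish-Chandra-type projection described in the paragraph preceding the lemma, and then verify the contravariance relation and non-degeneracy. First I would fix the decomposition $\cU(\mg)=\cU(\mathfrak{h})\oplus\left(\mn^-\cU(\mg)+\cU(\mg)\mn^+\right)$, which is the standard PBW consequence that $\cU(\mg)$ splits as $\cU(\mathfrak{h})$ plus the left ideal generated by $\mn^-$ plus the right ideal generated by $\mn^+$. Let $\pi:\cU(\mg)\to\cU(\mathfrak{h})$ be the associated projection. Since $M_\lambda$ is a highest weight module with highest weight vector $v_\lambda$, every vector can be written as $u\cdot v_\lambda$ for some $u\in\cU(\mn^-)$. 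Using the anti-involution $\tau$ of Definition \ref{n+n-}, I would define the form on such vectors by
\begin{eqnarray*}
(u\cdot v_\lambda,\,u'\cdot v_\lambda)&=&\overline{\lambda\left(\pi\left(\tau(u)u'\right)\right)},
\end{eqnarray*}
where $\lambda$ is extended to an algebra homomorphism $\cU(\mathfrak{h})\to\mC$ and the complex conjugation is inserted to make the form hermitian rather than merely bilinear. The well-definedness follows from the fact that $\tau$ interchanges $\mn^+$ and $\mn^-$ and so sends the left ideal $\cU(\mn^-)\mn^-$-part into the right ideal, which $\pi$ annihilates; combined with $\mn^+\cdot v_\lambda=0$ this guarantees independence of the chosen representative $u$.

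Next I would check the contravariance relation. Writing $x=u\cdot v_\lambda$ and $y=u'\cdot v_\lambda$, the identity $(Y_\alpha x,y)=(-1)^{|\alpha||x|}(x,X_\alpha y)$ reduces to the statement that $\tau(Y_\alpha u)=(-1)^{|\alpha||u|}\tau(u)X_\alpha$, which is exactly the defining property of $\tau$ recorded in Definition \ref{n+n-} (applied to $f(\mn^-)=Y_\alpha u$), so this step is essentially formal once the form is in place; one only needs to track the sign $(-1)^{|\alpha||x|}$ carefully, noting that the weight component forces $|x|$ and $|u|$ to agree modulo the parity of $v_\lambda$. I would also observe that the form respects the weight grading: $(x,y)=0$ unless $x$ and $y$ have the same weight, because $\pi(\tau(u)u')$ lands in $\cU(\mathfrak{h})$ only through its weight-zero component, and $\lambda$ evaluated on a nonzero-weight element of $\cU(\mathfrak{h})\cdot$(shift) vanishes. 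This immediately gives orthogonality of distinct weight spaces, so non-degeneracy globally is equivalent to non-degeneracy on each weight space, which is the final assertion of the lemma.

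The main obstacle is non-degeneracy. The form as defined always exists on the Verma module $M(\lambda)$, but there it has a radical, and the key point is that the radical of the contravariant form on $M(\lambda)$ coincides with the unique maximal proper submodule. I would argue that the radical $R=\{x:(x,y)=0\ \forall y\}$ is a submodule: contravariance shows that if $x\in R$ then $Y_\alpha x,X_\alpha x\in R$ as well, and $R$ contains no highest weight vector of weight $\lambda$ since $(v_\lambda,v_\lambda)=\overline{\lambda(1)}=1\neq0$. Hence $R$ is a proper submodule, so it is contained in the maximal proper submodule; conversely any proper submodule is orthogonal to $v_\lambda$ and, being a submodule, is orthogonal to all of $\cU(\mn^-)v_\lambda=M(\lambda)$ by contravariance, so it lies in $R$. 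Therefore $R$ equals the maximal proper submodule, and the induced form on the irreducible quotient $M_\lambda=M(\lambda)/R$ is non-degenerate by construction. Restricting to a single weight space $M_\lambda^\nu$, the form pairs $M_\lambda^\nu$ with itself (by the weight-orthogonality above), and a vector in the radical of this restricted pairing would be orthogonal to all of $M_\lambda$, hence zero; this yields non-degeneracy on each weight space and completes the proof. The only genuinely delicate bookkeeping throughout is the consistent handling of the $(-1)$ signs coming from the super structure, which is why I would carry the parity labels explicitly from the outset.
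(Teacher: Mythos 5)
Your construction is correct and is at heart the same contravariant (Shapovalov-type) form the paper builds: both proofs define the pairing by $(fv^+,gv^+)=(v^+,\tau(f)gv^+)$, i.e.\ by projecting $\tau(f)g$ onto the Cartan part and evaluating at $\lambda$, and both verify the contravariance relation formally from Definition \ref{n+n-}. The difference lies in how non-degeneracy is reached. The paper defines the form directly on the irreducible module $M_\lambda$, observes that the radical $D$ is a $\mg$-subrepresentation, and kills it by irreducibility together with the fact that the form is not identically zero. You instead build the form on the Verma module $M(\lambda)$, where PBW makes $u\mapsto u\cdot v_\lambda$ a bijection from $\cU(\mn^-)$ so the definition needs no choice of representative, prove that the radical $R$ equals the unique maximal proper submodule, and descend to the quotient $M_\lambda=M(\lambda)/R$. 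Your route is longer but buys something the paper leaves implicit: when one defines the form directly on $M_\lambda$ via representatives $f,g\in\cU(\mn^-)$, one must check that $fv^+=0$ in $M_\lambda$ forces $(v^+,\tau(f)gv^+)=0$ for all $g$, and this is precisely the statement that the maximal proper submodule of $M(\lambda)$ lies in the radical of the Shapovalov form --- the content of your third paragraph. One caveat: your first-paragraph claim that well-definedness already follows from $\tau$ exchanging the ideals annihilated by $\pi$, combined with $\mn^+\cdot v_\lambda=0$, is not sufficient on the irreducible module, since the annihilator of $v_\lambda$ in $\cU(\mn^-)$ is larger than what that formal argument controls; it is the Verma-module argument (radical equals maximal submodule) that actually closes this point, so the proof should be organized with that argument carrying the weight, as you in fact do later. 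The final assertion about weight spaces follows in both treatments from the orthogonality of distinct weight spaces.
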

\begin{proof}

We define the form $(\cdot,\cdot)$ on $M_\lambda$ as follows. The highest weight vector $v^+$ of $M_\lambda$ satisfies $(v^+,v^+)=1$ and $v^+$ is orthogonal with respect to all vectors of lower weight. For all $f,g\in\cU(\mathfrak{n}^-)$ 
\begin{eqnarray*}
(fv^+,gv^+)&=&(v^+,\tau(f)gv^+)
\end{eqnarray*}
holds. This hermitian form satisfies the required properties if it is non-degenerate.

If the hermitian form would be degenerate we denote the vector space of all degenerate vectors by $D$,
\begin{eqnarray*}
D&=&\{x\in M_\lambda|(x,v)=0,\,\forall v\in M_\lambda\}.
\end{eqnarray*}
From the properties of the hermitian form it follows that $D$ is a $\mg$-subrepresentation of $M_\lambda$, but since $M_\lambda$ is irreducible and $(\cdot,\cdot)$ is not identically zero we obtain $D=0$.
\end{proof}

If the hermitian form on $M_\lambda$ in Lemma \ref{lemmaBilForm} is also positive definite (an inner product), the representation is unitary. When two such representations would be considered, Theorem \ref{dimensequal} can be made stronger to $A(\mathfrak{n}^+)_\nu\oplus\left(\mathfrak{n}^-\cdot W\right)_\nu= W_\nu$ and complete reducibility follows immediately, this is the case for Lie algebras and for $\mathfrak{gl}(p|q)$.

When applying Corollary \ref{ThmComRed}, the quadratic Casimir operator $\cC_2\in\cU(\mathfrak{g})$ can be of importance. This quadratic operator is of the form
\begin{eqnarray*}
\cC_2&=&\sum_\alpha Y_\alpha X_\alpha +p(\mathfrak{h})
\end{eqnarray*}
for some quadratic $p(\mathfrak{h})\in\cU(\mathfrak{h})$. The Cartan algebra part satisfies $p(\mathfrak{h})v_\kappa=\langle\kappa,\kappa+2\rho\rangle v_\kappa$ for $v_\kappa$ a vector of weight $\kappa$ in some representation, with $\langle\cdot,\cdot\rangle$ given in equation \eqref{weightinprod} and $\rho$ given by
\[\rho=\sum_{j=1}^d(\frac{m}{2}-j)\epsilon_j+\sum_{i=1}^n(1+n-\frac{m}{2}-i)\delta_i,
\]
for $\mg=\osp$. Since $\cC_2$ commutes with $\mathfrak{g}$ every vector inside an irreducible highest weight representation $M_\lambda$ is an eigenvector of this Casimir operator with the same eigenvalue, so $\cC_2 M_\lambda = \langle \lambda,\lambda+2\rho\rangle M_\lambda$. A necessary condition for $v_k^+\not\in\cU(\mg)\cdot v_j^+$ to hold with notations from Corollary \ref{ThmComRed} is therefore $\langle\lambda_j,\lambda_j+2\rho\rangle\not=\langle\lambda_k,\lambda_k+2\rho\rangle$.

The following theorem shows what happens if the condition of Corollary \ref{ThmComRed} is not met in the simplest case.
\begin{theorem}
\label{Simplecasencr}
Consider the tensor product $W=M_\lambda\otimes M_\mu$ of two irreducible highest weight modules of a Lie superalgebra $\mg$. If the dimension of the space of primitive vectors of $W$ is a finite number $p$ and there is a basis of primitive vectors $\{v_j^+, j=1,\cdots,p\}$ (of strictly different weights $\lambda_j$) such that
\begin{eqnarray*}
 v_j^+&\not\in&\cU(\mg)\cdot v_k^+ \qquad\mbox{for } j,k=1,\cdots,p\quad\mbox{with}\quad j\not=k \quad\mbox{ except when }j=p \mbox{ and } k=p-1
\end{eqnarray*} 
and $v_p^+\in\cU(\mathfrak{\mg})\cdot v_{p-1}^+$, then the decomposition
\begin{eqnarray*}
W&\cong&\left(\bigoplus_{j=1}^{p-2}M_{\lambda_j}\right)\,\bigoplus\, P
\end{eqnarray*}
holds with $P$ having subrepresentations $P\supset V\supset M_{\lambda_{p}}$. The representations $P$ and $V$ are indecomposable and $V$ satisfies $V/M_{\lambda_p}\cong M_{\lambda_{p-1}}$. The representation $P/V$ is a quotient of the Verma module with highest weight $\lambda_p$.
\end{theorem}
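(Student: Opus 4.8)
The plan is to combine the structural consequences of the hypotheses with the non-degenerate contravariant form of Lemma \ref{lemmaBilForm} (extended to $W$ as in the proof of Theorem \ref{dimensequal}), splitting off the ``harmless'' irreducible summands $M_{\lambda_1},\dots,M_{\lambda_{p-2}}$ before analysing the remaining indecomposable block $P$. First I would fix a linear ordering $\lambda_1>\lambda_2>\cdots>\lambda_p$ of the distinct weights refining the root order; then $\lambda_p<\lambda_{p-1}$, since $v_p^+\in\cU(\mg)v_{p-1}^+$ forces $\lambda_p=\lambda_{p-1}-\gamma$ with $\gamma$ a nonzero sum of positive roots. For $j\le p-2$ the assumption that $v_j^+$ is the only primitive vector inside $\cU(\mg)v_j^+$ makes $N_j:=\cU(\mg)v_j^+\cong M_{\lambda_j}$ irreducible, while $V:=\cU(\mg)v_{p-1}^+$ is a cyclic highest weight module containing exactly the two primitive vectors $v_{p-1}^+,v_p^+$; hence $\cU(\mg)v_p^+\cong M_{\lambda_p}$ is its socle, $V/M_{\lambda_p}\cong M_{\lambda_{p-1}}$, and $V$ is indecomposable. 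Note already that $v_p^+=f\,v_{p-1}^+$ with $f\in\cU(\mathfrak{n}^-)$ of negative weight, so $v_p^+\in\mathfrak{n}^-\cdot W$ is \emph{not} maximal.

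Next I would split off the irreducible pieces by an orthogonal induction from the top. Using $A(\mathfrak{n}^+)=(\mathfrak{n}^-\cdot W)^\perp$ and orthogonality of distinct weight spaces, at the $j$-th stage the submodule $W^{(j)}:=(N_1\oplus\cdots\oplus N_j)^\perp$ has $\lambda_{j+1}$ as a maximal weight with weight space $\mathbb{C}v_{j+1}^+$ (any vector of a maximal weight is primitive, and $v_{j+1}^+$ is the only primitive vector of that weight). Non-degeneracy of the form on this line gives $(v_{j+1}^+,v_{j+1}^+)\neq0$, so the form is non-degenerate on the irreducible module $N_{j+1}$ (its radical is a proper, hence zero, submodule) and $W^{(j)}=N_{j+1}\oplus W^{(j+1)}$. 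After $p-2$ steps this yields $W=\left(\bigoplus_{j=1}^{p-2}M_{\lambda_j}\right)\oplus P$ with $P=(N_1\oplus\cdots\oplus N_{p-2})^\perp$ a submodule carrying a non-degenerate contravariant form, whose primitive vectors are precisely $v_{p-1}^+,v_p^+$ and whose highest weight is $\lambda_{p-1}$. In particular $v_1^+,\dots,v_{p-1}^+$ are all maximal and $V\subseteq P$.

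It then remains to describe $P$. Applying the dimension count of Theorem \ref{dimensequal} to $P$ (its proof only uses the non-degenerate form) gives a one-dimensional space of maximal vectors of weight $\lambda_p$; since $v_p^+\in\mathfrak{n}^-\cdot P$, this space is spanned by a non-primitive vector $w\notin V$. A downward induction on weights, using that $(\mathfrak{n}^-\cdot P)_\nu$ is generated by higher weight spaces, shows $P=V+\cU(\mg)w$, so $P/V=\cU(\mg)\overline{w}$ is cyclic with $\overline{w}$ of weight $\lambda_p$. To see that $\overline{w}$ is a highest weight vector I would pass to $V^\perp\subseteq P$: it is a submodule whose only primitive vector is $v_p^+$ (as $(v_{p-1}^+,v_{p-1}^+)\neq0$ keeps $v_{p-1}^+$ outside $V^\perp$), so every maximal weight of $V^\perp$ equals $\lambda_p$ and all its weights are $\le\lambda_p$; since $\dim(P/V)_\nu=\dim(V^\perp)_\nu$ by non-degeneracy, all weights of $P/V$ are $\le\lambda_p$, whence $\mathfrak{n}^+\overline{w}=0$ and $P/V$ is a highest weight module of highest weight $\lambda_p$, i.e.\ a quotient of the Verma module with highest weight $\lambda_p$. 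Finally $M_{\lambda_p}$ is the unique irreducible submodule of $P$, so $P$ has simple socle and is indecomposable, as is the cyclic module $V$.

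The main obstacle is this last paragraph: controlling the top of the indecomposable block $P$. Everything hinges on two self-duality facts for the contravariant form — that a maximal primitive vector pairs non-trivially with itself (which drives the splittings and the inequalities $(v_{j+1}^+,v_{j+1}^+)\neq0$), and that $V^\perp$ has $v_p^+$ as its only primitive vector. The delicate point is that $w$ is maximal but \emph{not} primitive, hence a priori not orthogonal to $\mathfrak{n}^-\cdot P$; proving that $P/V$ is genuinely a highest weight module, rather than merely a cyclic module generated by a non-highest vector, is exactly what the weight bound extracted from $V^\perp$ is designed to supply.
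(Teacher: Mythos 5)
Your proposal follows the same route as the paper: construct the contravariant form on $W$, split off $N_j=\cU(\mg)\cdot v_j^+$ for $j\le p-2$ as orthogonal direct summands, and then analyse the remaining block $P$ containing $v_{p-1}^+$ and $v_p^+$. Indeed your endgame is \emph{more} careful than the printed proof (the socle argument for indecomposability of $P$, and the control of the weights of $P/V$ via $V^\perp$, fill in details the paper leaves implicit). But the splitting step contains a genuine gap. Your orthogonal induction requires, at stage $j$, that $\lambda_{j+1}$ be a \emph{maximal} weight of $W^{(j)}$ with weight space $\mathbb{C}v_{j+1}^+$; this silently assumes that the $p-2$ ``good'' weights can all be processed \emph{before} $\lambda_{p-1}$ and $\lambda_p$ in a linear extension of the root order, i.e.\ that no $\lambda_j$ with $j\le p-2$ lies below $\lambda_{p-1}$ or $\lambda_p$. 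The hypotheses do not guarantee this, and it can fail: take $\mg=\mathfrak{sl}(2)$ and $W=M_{-1}\otimes M_{2}$ (the irreducible Verma module of highest weight $-1$ tensored with the three-dimensional module). Its primitive vectors have weights $1,\,-1,\,-3$, the only containment being $v_{-3}^+\in\cU(\mg)\cdot v_{1}^+$; so in the theorem's labelling the unique good weight is $\lambda_1=-1$, while $\lambda_{p-1}=1$ and $\lambda_p=-3$. Here $\lambda_1<\lambda_{p-1}$: at the very first stage the maximal weight of $W^{(0)}=W$ is $\lambda_{p-1}$, not $\lambda_1$, and $\dim W_{-1}=2$, so your argument yielding $(v_1^+,v_1^+)\neq 0$ and the splitting $W=N_1\oplus W^{(1)}$ never gets started (the theorem's conclusion is nevertheless true: $W\cong M_{-1}\oplus P$ with $P$ indecomposable).

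The paper avoids the ordering issue by splitting all the summands off at once: non-degeneracy of the form on each $N_j$, $j\le p-2$, is extracted from the Theorem \ref{dimensequal} count at the single weight $\lambda_j$ (there exists $a\in W_{\lambda_j}$ with $(v_j^+,a)\neq 0$; such $a$ cannot lie in $\mathfrak{n}^-\cdot W$, and the codimension-one statement forces $a$ to have a $v_j^+$-component), which rests on showing $v_j^+\notin\mathfrak{n}^-\cdot W$ by the same induction as in Corollary \ref{ThmComRed} rather than on any comparison between $\lambda_j$ and $\lambda_{p-1},\lambda_p$. That is the missing ingredient in your write-up: you need an argument for $(v_j^+,v_j^+)\neq0$, equivalently $v_j^+\notin\mathfrak{n}^-\cdot W$, valid for good weights lying below the exceptional pair. (In the example above: all vectors of weight $>-1$ lie in $\cU(\mg)\cdot v_{1}^+$, so $v_{-1}^+\in\mathfrak{n}^-\cdot W$ would force $v_{-1}^+\in\cU(\mg)\cdot v_{1}^+$, contradicting the hypothesis.) Once this is supplied for all $j\le p-2$, your orthogonal splitting and your subsequent analysis of $P$, $V$ and $P/V$ go through unchanged.
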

\begin{proof}
Each representation $\cU(\mathfrak{n}^-)\cdot v_j^+$ for $j<p-1$ is irreducible since it contains no other primitive vectors. The restriction of $( \cdot,\cdot)$ from the proof of Theorem \ref{dimensequal} is still non-degenerate when restricted to $M_{\lambda_j}= \cU(\mathfrak{n}^-)\cdot v_j^+$ for $j<p-1$ since the subspace of degenerate vectors would constitute a subrepresentation. So the restriction either has to be non-degenerate or zero. It can not be zero since there has to be a vector $a\in W$ of weight $\lambda_j$ such that $( v_j^+,a)\not=0$. This vector can not be in $\mathfrak{n}^-\cdot W$, so by Theorem \ref{dimensequal} it has to contain a part $v_j^+$ and $( v_j^+,v_j^+)\not=0$. Therefore the orthogonal complement of $\left(\bigoplus_{j=1}^{p-2}M_{\lambda_j}\right)$ is denoted by $P$ and $P\cap \left(\bigoplus_{j=1}^{p-2}M_{\lambda_j}\right)=0$, so it satisfies $W\cong\left(\bigoplus_{j=1}^{p-2}M_{\lambda_j}\right)\bigoplus P$.

Now we look at the representation $P$, it contains two primitive vectors $v_{p-1}^+$ and $v_p^+$ such that $v_p^+\in\cU(\mathfrak{n}^-)\cdot v_{p-1}^+$. Theorem \ref{dimensequal} implies that there is a vector of weight $\lambda_p$ which is not generated by $\mathfrak{n}^-$-action on other vectors. If $P$ could be decomposed into two subrepresentations, each representation would have a maximal vector, while there is only one maximal vector inside $P$.

Because $V$ is the quotient of a Verma module with two primitive vectors, it follows immediately that $V$ is indecomposable and $ \cU(\mathfrak{n}^-)\cdot v_p^+\cong M_{\lambda_p}$ and $V/M_{\lambda_p}\cong M_{\lambda_{p-1}}$. The highest weight vector in $P/V$ is of weight $\lambda_p$ and this weight space has dimension 1 in $P/V$ according to Theorem \ref{dimensequal}. If there would be a vector in $P/V$ that is not generated by the highest weight vector, this would lead to another vector in $P$, which is not generated by $\mathfrak{n}^-$-action, which is impossible. This proves that $P/V$ is the quotient of a Verma module.
\end{proof}

%The hermitian form $\langle \cdot|\cdot\rangle$ is non-degenerate on $P$. It is also easily checked that $\langle V, M_{\lambda_p}\rangle=0$ holds.

%Theorem \ref{dimensequal} implies that $W/V$ is generated by $u+V$ for $u$ an arbitrary element of $P\backslash V$ of weight $\lambda_p$. It remains to be proved that $W/V$ has no other primitive vectors. If there would be such a %primitive vector there would be a vector $x\in P\backslash V$ such that $X_\alpha x\in V$ for all positive root vectors $X_\alpha$. This means that for every $w\in M_{\lambda_p}$ and every positive root $\alpha$, $\langle x, Y_\alpha %w\rangle=0$ holds. Since $x$ is supposed to be of weight lower than $\lambda_{p}$, this is equal to $\langle x, w\rangle=0$ for every $w\in M_{\lambda_p}$.

\section{Spinor representations for $\osp$}
\label{sectionsuperspin}

Before we introduce spinor representations for $\osp$ we characterize the completely pointed modules for $\osp$. Since the spinors for $\osp$ are a generalization of those for $\mathfrak{sp}(2n)$ they should also constitute completely pointed highest weight modules, see Theorem \ref{classcompp}.

\begin{theorem}
\label{supercompp}
The only irreducible completely pointed highest weight module for $B(0|n)=\mathfrak{osp}(1|2n)$ is given by $L_{-\frac{1}{2}\nu_n}^{1|2n}$.

The only irreducible completely pointed highest weight module for $B(d|n)=\mathfrak{osp}(2d+1|2n)$ is given by $L_{\omega_d-\frac{1}{2}\nu_n}^{2d+1|2n}=K_{\omega_d-\frac{1}{2}\nu_n}^{2d+1|2n}$. 

The only irreducible completely pointed highest weight modules for $C(n+1)=\mathfrak{osp}(2|2n)$ are given by $K_{\frac{1}{2}\epsilon-\frac{1}{2}\nu_n}^{2|2n}$ and $K_{\frac{1}{2}\epsilon+\nu_{n-1}-\frac{3}{2}\nu_n}^{2|2n}$.

The only irreducible completely pointed highest weight modules for $D(d|n)=\mathfrak{osp}(2d|2n)$ are given by $L_{\omega_d-\frac{1}{2}\nu_n}^{2d|2n}=K_{\omega_d-\frac{1}{2}\nu_n}^{2d|2n}$ and $L_{\omega_{d-1}-\frac{1}{2}\nu_n}^{2d|2n}=K_{\omega_d+\nu_{n-1}-\frac{3}{2}\nu_n}^{2d|2n}$.
\end{theorem}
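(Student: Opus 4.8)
The plan is to prove the statement by establishing both directions: that each listed module is indeed completely pointed, and that no other irreducible highest weight module can be. The organizing principle in all four cases is that, in our root system, every candidate highest weight has the shape $\omega_d+\sigma$ with $\sigma\in\{-\tfrac12\nu_n,\ \nu_{n-1}-\tfrac32\nu_n\}$ one of the two completely pointed $\mathfrak{sp}(2n)$-weights from Theorem \ref{classcompp} (with the convention $\omega_0=0$ covering $B(0|n)$, where $d=0$). For the existence direction I would realize such a module inside the super spinor space $\mS_{m|2n}$ constructed in Section \ref{sectionsuperspin}, i.e. as a tensor product of an $\mathfrak{so}(m)$-spinor space with a symplectic spinor space. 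Since the $\mathfrak{so}(m)$-spinor space is multiplicity free as an $\mathfrak{so}(m)$-module and the symplectic spinor space is multiplicity free as an $\mathfrak{sp}(2n)$-module, and since the $\epsilon$-weights and $\delta$-weights live in complementary sublattices of $\mathfrak{h}^\ast$, every $\osp_0$-weight multiplicity is the product of the two factors and hence equals one; as the $\osp$-Cartan is the same, the space is completely pointed, and its cyclic highest weight vector (shown in Section \ref{sectionsuperspin} to generate the whole space) gives the irreducible module $K^{m|2n}_{\omega_d+\sigma}$.

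For uniqueness, let $M=K^{m|2n}_\Lambda$ be completely pointed and write $\Lambda=\Lambda_\epsilon+\Lambda_\delta$. The decisive structural observation is that $\mathfrak{so}(m)\subset\osp_0$ preserves the $\delta$-grading while $\mathfrak{sp}(2n)\subset\osp_0$ preserves the $\epsilon$-grading. Hence $M=\bigoplus_\eta M[\eta]$ splits into $\mathfrak{so}(m)$-modules indexed by the $\delta$-weight $\eta$, and $M=\bigoplus_\xi M\{\xi\}$ splits into $\mathfrak{sp}(2n)$-modules indexed by the $\epsilon$-weight $\xi$, each slice again carrying all weight multiplicities at most one. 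By Theorem \ref{classcompp} there are no infinite dimensional completely pointed $\mathfrak{so}(m)$-modules, so every $M[\eta]$ is a finite dimensional multiplicity free $\mathfrak{so}(m)$-module; in particular the submodule $\cU(\mathfrak{so}(m))\,v_\Lambda\cong L^{m|0}_{\Lambda_\epsilon}$ inside $M[\Lambda_\delta]$ is multiplicity free. Since $M$ is by definition infinite dimensional while all its $\mathfrak{so}(m)$-slices are finite dimensional, some slice $M\{\xi\}$ must be infinite dimensional, and by Theorem \ref{classcompp} its infinite dimensional constituents are the symplectic spinor modules; this forces the $\mathfrak{sp}(2n)$-data to be of metaplectic type, so $\Lambda_\delta\in\{-\tfrac12\nu_n,\ \nu_{n-1}-\tfrac32\nu_n\}$.

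It remains to pin down $\Lambda_\epsilon$. Because $\osp_1\cong\mC^m\otimes\mC^{2n}$ is the tensor product of the two vector representations for $\osp_0$, the negative odd root vectors carry each $\mathfrak{so}(m)$-slice $M[\eta]$ into constituents of $\mC^m\otimes M[\eta]$, subject to the relations $Y_\gamma^2=0$ for the isotropic odd roots $\gamma=\epsilon_j\pm\delta_i$. The requirement that every slice reached in this way stay a finite dimensional multiplicity free $\mathfrak{so}(m)$-module, with no weight collisions against the slices already produced, singles out the spinor weight: it is the only $\mathfrak{so}(m)$-highest weight whose repeated coupling with the vector module regenerates only multiplicity free pieces compatible with complete pointedness. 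This yields $\Lambda_\epsilon=\omega_d$ for $m$ odd and $\Lambda_\epsilon\in\{\omega_d,\omega_{d-1}\}$ for $m$ even (with $\Lambda_\epsilon=0$ when $d=0$). I expect this multiplicity free coupling analysis to be the main obstacle, since it is exactly the point where the distinguished role of the spinor among all $\mathfrak{so}(m)$-weights must be extracted from the interplay between the odd operators and the two slice decompositions.

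Finally, the count of surviving modules is governed by whether the non-isotropic odd root $\delta_i$ is present. For $m=2d+1$ this root exists and the $\mathfrak{osp}(1|2n)$-subalgebra it generates has, by the first assertion of the theorem, a unique completely pointed module; its action fuses the even and odd symplectic spinor slices into a single $\osp$-module, so only one value of $\Lambda_\delta$ survives and we obtain the single module for $B(d|n)$ and $B(0|n)$. For $m=2d$ no such root exists, both parities survive, and we obtain the two modules for $D(d|n)$ and for $C(n+1)=\mathfrak{osp}(2|2n)$. Translating the highest weights $\omega_d+\sigma$ from our root system back to the distinguished one by the odd reflections of Section \ref{sectionrefl}, via Theorem \ref{changeroot}, reproduces the stated labels $L^{m|2n}_{\bullet}$ and completes the proof.
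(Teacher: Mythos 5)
Your overall frame agrees with the paper up to a point: existence via the explicit realization on $\Lambda_{d|n}$ from Section \ref{sectionsuperspin}, and uniqueness by decomposing the module as an $\mathfrak{so}(m)\oplus\mathfrak{sp}(2n)$-representation and invoking Theorem \ref{classcompp} to force the $\delta$-part of the highest weight into $\{-\tfrac{1}{2}\nu_n,\ \nu_{n-1}-\tfrac{3}{2}\nu_n\}$ (the paper also notes here that irreducibility forbids mixing integer and half-integer $\mathfrak{sp}(2n)$-weights, which is why no finite dimensional $\mathfrak{sp}(2n)$-constituent can occur at all). But the decisive step --- pinning $\Lambda_\epsilon$ down to the spinor weight --- is exactly where your proposal stops being a proof. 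You assert that a ``multiplicity free coupling analysis'' of $\mC^m\otimes M[\eta]$ ``singles out the spinor weight,'' and you yourself flag this as the main obstacle; no argument is given for why every other $\mathfrak{so}(m)$-weight fails, so the heart of the theorem is missing. The paper settles this with a short computation rather than any coupling analysis: since the only admissible $\mathfrak{sp}(2n)$-constituents are the two symplectic spinor modules, no $\mathfrak{sp}(2n)$-weight strictly higher than $-\tfrac{1}{2}\nu_n$ can occur anywhere in $V$; hence $Y_{\epsilon_j-\delta_1}u=0$ for the highest weight vector $u$ (its $\delta$-weight would otherwise be too high), and then $0=X_{\epsilon_j-\delta_1}Y_{\epsilon_j-\delta_1}u=-(k_j-\tfrac{1}{2})u$ forces every $k_j=\tfrac{1}{2}$, i.e. $\lambda=\omega_d$. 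Any completed version of your argument needs something equivalent to this use of the isotropic odd roots.

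There are two further problems. First, your count of surviving modules in the $B$-series is circular: for $B(0|n)$ you justify excluding $\nu_{n-1}-\tfrac{3}{2}\nu_n$ ``by the first assertion of the theorem,'' which is precisely the statement to be proven for $B(0|n)$, and your treatment of $B(d|n)$ then leans on it. The paper instead excludes $K^{2d+1|2n}_{\omega_d+\nu_{n-1}-\frac{3}{2}\nu_n}$ directly: from $X_{\epsilon_j-\delta_n}Y_{\epsilon_j-\delta_n}u=u\neq 0$ one gets $Y_{\epsilon_j-\delta_n}u\neq 0$, a vector of $\mathfrak{sp}(2n)$-weight $-\tfrac{1}{2}\nu_n$, forcing a constituent $L^{2d+1|0}_{\mu}\times L^{0|2n}_{-\frac{1}{2}\nu_n}$ with $\mu$ strictly below $\omega_d$, which is impossible since $\omega_d$ is the lowest such dominant weight. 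Second, the case $n=1$ requires separate care, which your proposal never notices: for the candidate weight $\lambda-\tfrac{3}{2}\delta$ the first-order vanishing argument degenerates (the shifted $\delta$-weight is no longer too high), and the paper must instead impose the second-order condition $Y_{\epsilon_j-\delta}Y_{\epsilon_l-\delta}u=0$, i.e. $(k_j-\tfrac{1}{2})(k_l-\tfrac{3}{2})=0$, and then eliminate the spurious solutions $\omega_{d-1}$ and $t\epsilon_1+3\omega_d$ by odd reflections and a direct check. So while your skeleton points in the right direction and your existence half is sound, the uniqueness half has a genuine gap at its central step and a circularity in the parity count.
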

\begin{proof}
We will write the proof in a way that assumes $m >2$, although with simple adjustments of notation it also holds for $B(0|n)$ and $C(n+1)$.

An irreducible completely pointed highest weight $\osp$-module $V$ should decompose as an $\mathfrak{so}(m)\oplus\mathfrak{sp}(2n)$-module into (a finite amount of) irreducible representations which have the property that their weight spaces are one dimensional. Theorem \ref{classcompp} implies that in order to make the representation infinite dimensional at least one of the two completely pointed modules of $\mathfrak{sp}(2n)$ should appear. Therefore we obtain the following decomposition of $V$ as an $\mathfrak{so}(m)\oplus\mathfrak{sp}(2n)$-module,
\begin{eqnarray*}
V&=&U_1\times L^{0|2n}_{-\frac{1}{2}\nu_{n}}\oplus U_2\times L^{0|2n}_{\nu_{n-1}-\frac{3}{2}\nu_n}\oplus U_3\times W.
\end{eqnarray*}
Here $U_1, U_2$ and $U_3$ are finite dimensional $\mathfrak{so}(m)$-representations because of Theorem \ref{classcompp} and $W$ a finite dimensional $\mathfrak{sp}(2n)$-representation. Since $V$ is irreducible we cannot combine integer values for the weights of $\mathfrak{sp}(2n)$ with the half-integer ones for $L^{0|2n}_{-\frac{1}{2}\nu_{n}}$ or $L^{0|2n}_{\nu_{n-1}-\frac{3}{2}\nu_n}$, therefore $W=0$.  So $V$ will correspond to either $K^{m|2n}_{\lambda-\frac{1}{2}\nu_n}$ or $K^{m|2n}_{\lambda+\nu_{n-1}-\frac{3}{2}\nu_n}$, with $\lambda=\sum_{j=1}^dk_j\epsilon_j$ an integral dominant weight for $\mathfrak{so}(m)$.

First we assume that $n>1$. In both cases ($V=K^{m|2n}_{\lambda-\frac{1}{2}\nu_n}$ or $V=K^{m|2n}_{\lambda+\nu_{n-1}-\frac{3}{2}\nu_n}$) there can appear no $\mathfrak{sp}(2n)$-weights which are higher than $-\frac{1}{2}\nu_n$, because of Theorem \ref{classcompp}. Therefore $Y_{\epsilon_j-\delta_1}u$ should be zero for $u$ the highest weight vector of $V$ (since $\delta_1-\frac{1}{2}\nu_n>-\frac{1}{2}\nu_n$ for the first case and since $\delta_1+\nu_{n-1}-\frac{3}{2}\nu_n>-\frac{1}{2}\nu_n$ if $n>1$ for the second case). Since $X_{\epsilon_j-\delta_1}Y_{\epsilon_j-\delta_1}u=-(k_j-\frac{1}{2})u$ for both cases, this immediately implies that $k_j=\frac{1}{2}$, or $\lambda=\omega_d$. This gives the two possibilities for $\mathfrak{osp}(2d|2n)$, we still need to prove that for $\mathfrak{osp}(2d+1|2n)$ only one can appear. 

Assume that $K^{2d+1|2n}_{\omega_d+\nu_{n-1}-\frac{3}{2}\nu_{n}}$ exists and is completely pointed. Since $X_{\epsilon_j-\delta_n}Y_{\epsilon_j-\delta_n}u=-(\frac{1}{2}-\frac{3}{2})u$, we find that $Y_{\epsilon_j-\delta_n}u\not=0$ for $u$ the highest weight vector of $K^{2d+1|2n}_{\omega_d+\nu_{n-1}-\frac{3}{2}\nu_{n}}$. Since $Y_{\epsilon_j-\delta_n}u$ has weight $\omega_d-\epsilon_j-\frac{1}{2}\nu_n$, the $\mathfrak{sp}(2n)$-weight $-\frac{1}{2}\nu_n$ appears, so $L_{-\frac{1}{2}\nu_n}^{0|2n}$ must appear in the $\mathfrak{so}(2d+1)\oplus\mathfrak{sp}(2n)$-decomposition of $K^{2d+1|2n}_{\omega_d+\nu_{n-1}-\frac{3}{2}\nu_{n}}$. As a consequence, the $\mathfrak{so}(2d+1)\oplus \mathfrak{sp}(2n)$ decomposition must be of the form
\begin{eqnarray*}
K^{2d+1|2n}_{\omega_d+\nu_{n-1}-\frac{3}{2}\nu_{n}}&=&L^{2d+1|0}_{\omega_d}\times L^{0|2n}_{\nu_{n-1}-\frac{3}{2}\nu_{n}}\oplus L^{2d+1|0}_{\mu}\times L^{0|2n}_{-\frac{1}{2}\nu_{n}}\oplus\cdots.
\end{eqnarray*}
Here, $\mu$ must be an integral dominant $\mathfrak{so}(2d+1)$-weight, which is strictly lower than $\omega_d$, otherwise the highest weight of the $\mathfrak{osp}(2d+1|2n)$-representation would be $\mu-\frac{1}{2}\nu_{n}$. This is impossible since $\omega_d$ is the lowest integral dominant weight for $\mathfrak{so}(2d+1)$. Therefore only the option $K_{\omega_d-\frac{1}{2}\nu_n}^{2d+1|2n}$ remains. 

Now we consider the case $n=1$. The proof that $V=K^{m|2}_{\lambda-\frac{1}{2}\delta}$ leads to $\lambda=\omega_d$ does not change compared to $n>1$. In case $V=K^{m|2}_{\lambda-\frac{3}{2}\delta}$, the condition that no $\mathfrak{sp}(2n)$-weight higher than $-\frac{1}{2}\delta$ appears leads to the condition $Y_{\epsilon_j-\delta}Y_{\epsilon_l-\delta}u=0$ for $1\le j < l \le d$, or $(k_j-\frac{1}{2})(k_l-\frac{3}{2})=0$. The only possible integral dominant weights that satisfy this are
\[\lambda=\omega_d, \quad \lambda=\omega_{d-1} \,\mbox{ (in case }\, m=2d),\qquad t\epsilon_1+3\omega_d \,\mbox{ for}\, t\in\mN.\]
The second one can be excluded because the technique of odd reflections shows that it leads to an inconsistent highest weight. The third one is not possible since it can be immediately checked it cannot be completely pointed.

Now we assume that $K_{\omega_d-\frac{1}{2}\nu_n}^{2d+1|2n}$, $K_{\omega_d-\frac{1}{2}\nu_n}^{2d|2n}$ and $K_{\omega_d+\nu_{n-1}-\frac{3}{2}\nu_n}^{2d|2n}$ exist. We calculate the highest weight of the representations in the standard choice of positive roots, again using the method from \cite{MR1327543, MR1201236} explained in Section \ref{sectionrefl}. Since $\langle \omega_d-\frac{1}{2}\nu_n,\epsilon_j-\delta_i\rangle=0$ for all $1\le j\le d$ and $1\le i \le n$ we obtain immediately $L_{\omega_d-\frac{1}{2}\nu_n}^{m|2n}=K_{\omega_d-\frac{1}{2}\nu_n}^{m|2n}$. For the case $K_{\omega_d+\nu_{n-1}-\frac{3}{2}\nu_n}^{2d|2n}$ we find 
\begin{eqnarray*}
\langle \omega_d+\nu_{n-1}-\frac{3}{2}\nu_n,\epsilon_d-\delta_i\rangle&=&\frac{1}{2}\left(\frac{1}{2}-\frac{1}{2}-\delta_{i,n}\right)=-\delta_{i,n}\frac{1}{2}.
\end{eqnarray*}
So after applying the first $n$ odd reflections in equation \eqref{orderrefl} we obtain highest weight $\omega_d+\nu_{n-1}-\frac{3}{2}\nu_n-\epsilon_d+\delta_n=\omega_{d-1}-\frac{1}{2}\nu_n$. Since $\langle\omega_{d-1}-\frac{1}{2}\nu_n,\epsilon_j-\delta_i\rangle=0$ for $1\le j\le d-1$ and $1\le i\le n$ the remainder of the odd reflections does not change the highest weight any further.

Finally, the proof that these representations exist and are in fact completely pointed will be clear from the subsequent explicit constructions.
\end{proof}

To realize the completely pointed modules described in Theorem \ref{supercompp} we need to combine the Grassmann algebra and polynomial algebra from Section \ref{classicalspinor} into one superalgebra.
\begin{definition}
\label{superGrass}
The algebra $\Lambda_{d|n}$ is freely generated by $\{\theta_1,\cdots,\theta_d,t_1,\cdots, t_n\}$ subject to the relations
\[\theta_{j}\theta_{k}=-\theta_k\theta_j\quad\mbox{for}\quad  1\le j,k\le d,\qquad t_it_l=t_lt_i\quad\mbox{for}\quad  1\le i,l\le n\]
and
\[\theta_jt_i=-t_i\theta_j\quad\mbox{for}\quad  1\le j\le d,\quad 1\le i\le n.\]
\end{definition}

This algebra is a superalgebra with unusual gradation. The commuting variables are considered as odd and the Grassmann variables are even. With this gradation the algebra is in fact a super anti-commutative algebra, $ab=-(-1)^{|a||b|}ba$ for $a,b$ two homogeneous elements of the superalgebra. Therefore this corresponds to a supersymmetric version of a Grassmann algebra.
\begin{definition}
\label{Xspin1}
The realization $\phi$ of $\mathfrak{osp}(2d+1|2n)$ as endomorphisms on $\Lambda_{d|n}$ is defined by
\begin{eqnarray*}
\phi(X_{\alpha_j})=\theta_{d-j}\partial_{\theta_{d-j+1}}& &\phi(Y_{\alpha_j})=\theta_{d-j+1}\partial_{\theta_{d-j}}\quad j=1,\cdots,d-1\\
\phi(X_{\alpha_d})=t_n\partial_{\theta_{1}}& &\phi(Y_{\alpha_d})=\theta_{1}\partial_{t_n}\\
\phi(X_{\alpha_{d+i}})=t_{n-i}\partial_{t_{n-i+1}}& &\phi(Y_{\alpha_{d+i}})=t_{n-i+1}\partial_{t_{n-i}}\quad i=1,\cdots,n-1\\
\phi(X_{\alpha_{d+n}})=\frac{i}{\sqrt{2}}\partial_{t_1}& &\phi(Y_{\alpha_{d+n}})=\frac{i}{\sqrt{2}}{t_1}.\\
\end{eqnarray*}
\end{definition}
This realization therefore satisfies
\begin{eqnarray*}
\phi(H_{\alpha_j})&=&\theta_{d-j}\partial_{\theta_{d-j}}-\theta_{d-j+1}\partial_{\theta_{d-j+1}}\quad j=1,\cdots,d-1\\
\phi(H_{\alpha_d})&=&t_n\partial_{t_n}+\theta_1\partial_{\theta_{1}}\\
\phi(H_{\alpha_{d+i}})&=&t_{n-i}\partial_{t_{n-i}}-t_{n-i+1}\partial_{t_{n-i+1}}\quad i=1,\cdots,n-1\\
\phi(H_{\alpha_{d+n}})&=&-(t_1\partial_{t_1}+\frac{1}{2}).\\
\end{eqnarray*}

For this realization, the representation of $\mathfrak{osp}(2d+1|2n)$ on $\Lambda_{d|n}$ is a simple highest weight module with highest weight vector $1$. This vector satisfies $\phi(H_{\alpha_k})1=-\frac{1}{2}\delta_{k,d+n}$. Since 
\begin{eqnarray*}
\omega_d(H_{\alpha_k})=-\frac{1}{2}\delta_{k,d}&\mbox{and} &\nu_n(H_{\alpha_k})=-\delta_{k,d}+\delta_{k,d+n},
\end{eqnarray*}
the vector $1$ has weight $\omega_d-\frac{1}{2}\nu_n$, with $\omega_d$ and $\nu_n$ given in Definition \ref{classweights}. We use the notation
\begin{eqnarray*}
\mS_{2d+1|2n}=K^{2d+1|2n}_{\omega_d-\frac{1}{2}\nu_n}=L^{2d+1|2n}_{\omega_d-\frac{1}{2}\nu_n}\cong \Lambda_{d|n},
\end{eqnarray*}
for the spinor representation of $\mathfrak{osp}(2d+1|2n)$.

The weight of elements of $\Lambda_{d|n}$ in the $\mathfrak{osp}(2d+1|2n)$-representation can be calculated from the expressions $\phi(H_\alpha)$. The weight of the vector
\begin{eqnarray}
\label{weightmonomials}
\theta_1^{\gamma_1}\theta_2^{\gamma_2}\cdots\theta_d^{\gamma_d}t_1^{\beta_1}t_2^{\beta_2}\cdots t_n^{\beta_n}&\mbox{is given by}&\omega_d-\frac{1}{2}\nu_n-\sum_{j=1}^d\gamma_{d-j+1}\epsilon_j-\sum_{i=1}^n\beta_{n-i+1}\delta_i.
\end{eqnarray}

Since this constitutes a (Poincar\'e-Birkhoff-Witt type) basis for $\Lambda_{d|n}$, this representation is completely pointed, which completes the proof of Theorem \ref{supercompp} for $\mathfrak{osp}(2d+1|2n)$. Thus we have obtained the unique completely pointed representation for $\mathfrak{osp}(2d+1|2n)$ as the spinor module.

Very similarly we can define a representation of $\mathfrak{osp}(2d|2n)$ on $\Lambda_{d|n}$.
\begin{definition}
\label{Xspin2}
The realization $\varphi$ of $\mathfrak{osp}(2d|2n)$ as endomorphisms on $\Lambda_{d|n}$ is defined by
\begin{eqnarray*}
\varphi(X_{\alpha_j})=\theta_{d-j}\partial_{\theta_{d-j+1}}& &\varphi(Y_{\alpha_{j}})=\theta_{d-j+1}\partial_{\theta_{d-j}}\quad j=1,\cdots,d-1\\
\varphi(X_{\alpha_d})=t_n\partial_{\theta_1}&&\varphi(Y_{\alpha_d})=\theta_1\partial_{t_n}\\
\varphi(X_{\alpha_{d+i}})=t_{n-i}\partial_{t_{n-i+1}}& &\varphi(Y_{\alpha_{d+i}})=t_{n-i+1}\partial_{t_{n-i}}\quad i=1,\cdots,n-1\\
\varphi(X_{\alpha_{d+n}})=-\frac{1}{2}\partial_{t_1}^2&&\varphi(Y_{\alpha_{d+n}})=\frac{1}{2}t_1^2
\end{eqnarray*}
\end{definition}
This definition implies $\varphi(H_{\alpha_k})=\phi(H_{\alpha_k})$ with $\phi(H_{\alpha_k})$ the realization for the corresponding element of $\mathfrak{osp}(2d+1|2n)$.

With this realization, the representation of $\mathfrak{osp}(2d|2n)$ on $\Lambda_{d|n}$ decomposes into two irreducible modules. One consisting of the polynomials of even degree, generated by the highest weight vector $1$ and one of the polynomials of odd degree, generated by highest weight vector $t_1$ (or $\theta_1$ in the standard choice of positive odd roots). We find $\varphi(H_{\alpha_k})1=-\frac{1}{2}\delta_{k,d+n}$ and $\varphi(H_{\alpha_k})t_1=-\frac{3}{2}\delta_{k,d+n}+\delta_{k,d+n-1}$. These correspond to the weights $\omega_d-\frac{1}{2}\nu_n$ and $\omega_{d}+\nu_{n-1}-\frac{3}{2}\nu_n$. Hence we obtain the two representations of $\mathfrak{osp}(2d|2n)$ in Theorem \ref{supercompp}, which proves that they exist. We use the notations
\begin{eqnarray*}
\mS^+_{2d|2n}=L^{2d|2n}_{\omega_d-\frac{1}{2}\nu_n}=K^{2d|2n}_{\omega_d-\frac{1}{2}\nu_n}\cong \Lambda_{d|n}^{+} &\mbox{and}&\mS^-_{2d|2n}=L^{2d|2n}_{\omega_{d-1}-\frac{1}{2}\nu_n}=K^{2d|2n}_{\omega_d+\nu_{n-1}-\frac{3}{2}\nu_n}\cong \Lambda_{d|n}^{-}.
\end{eqnarray*}
The notations $\Lambda_{d|n}^{+}$ and $\Lambda_{d|n}^{-}$ are used for the subalgebras of $\Lambda_{d|n}$ which are generated by an even or odd amount of generators $\theta_j$ or $t_i$ (not to be confused with the even and odd part of $\Lambda_{d|n}$ according to the $\mZ_2$ gradation). We also use the notation 
\[\mS_{2d|2n}=\mS^+_{2d|2n}\oplus\mS^-_{2d|2n}\cong \Lambda_{d|n}.\]
The weight of an element of $\Lambda_{d|n}$ as an $\mathfrak{osp}(2d|2n)$-representation is again given by equation \eqref{weightmonomials}, this is a direct consequence of the relations $\varphi(H_{\alpha_k})=\phi(H_{\alpha_k})$. So also these spinor representations are completely pointed, which concludes the proof of Theorem \ref{supercompp}.

In general we call the space $\mS_{m|2n}$ the super spinor space. This is $\Lambda_{\lfloor m/2\rfloor|n}$ as an $\mathfrak{osp}(m|2n)$-representation, which is irreducible depending on whether $m$ is even or odd. 

Comparing to Section \ref{classicalspinor} shows how $\mS_{m|2n}$ decomposes as an $\mathfrak{so}(m)\oplus \mathfrak{sp}(2n)$-representation.
\begin{eqnarray}
\nonumber
\mS_{2d+1|2n}&=&\mS_{2d+1|0}\times \mS^+_{0|2n}\oplus \mS_{2d+1|0}\times \mS^-_{0|2n}\\
\label{decompSp}
\mS^+_{2d|2n}&=&\mS^+_{2d|0}\times \mS^+_{0|2n}\oplus \mS^-_{2d|0}\times \mS^-_{0|2n}\\
\nonumber
\mS^-_{2d|2n}&=&\mS^+_{2d|0}\times \mS^-_{0|2n}\oplus \mS^-_{2d|0}\times \mS^+_{0|2n}.
\end{eqnarray}
Here we identify $\mS^+_{2|0}=\{1\}$ and $\mS^-_{2|0}=\{\theta_1\}$ for $\mathfrak{osp}(2|2n)$ and for $\mathfrak{osp}(1|2n)$ the decomposition should be $\mS_{1|2n}= \mS^+_{0|2n}\oplus  \mS^-_{0|2n}$. This shows that the decomposition of the $\osp$-representation in $\mathfrak{so}(m)\oplus\mathfrak{sp}(2n)$-representations is very small in a sense. This is natural, since it corresponds to a generalized notion of the minimal representation of $\mathfrak{sp}(2n)$, see \cite{Joseph}.

The representations $\mS_{2d+1|2n}$, $\mS^{\pm}_{2d|2n}$ are unitarizable. In case $m=2d$ this can be seen from proposition 4.1 in \cite{MR2028498}. In general it follows immediately from the inner product on $\Lambda_{d|n}$ defined by
\begin{eqnarray*}
\langle \theta_1^{a_1}\cdots \theta_d^{a_d}t_1^{b_1}\cdots t_n^{b_n}| \theta_1^{s_1}\cdots \theta_d^{s_d}t_1^{r_1}\cdots t_n^{r_n}\rangle&=&b!\,\delta_{as}\delta_{br}.
\end{eqnarray*}

\begin{remark}
Instead of definition  \ref{superGrass} we could also have considered the algebra $\widetilde{\Lambda}_{d|n}$ generated by $\widetilde{\theta}_j$ and $\widetilde{t}_i$ with commutation relations 
\begin{eqnarray*}
\widetilde{\theta_j}\widetilde{\theta}_k=-\widetilde{\theta}_k\widetilde{\theta}_j\qquad\widetilde{t}_i \widetilde{t}_l=\widetilde{t}_l\widetilde{t}_i\quad\mbox{and}\qquad \widetilde{\theta_j}\widetilde{t}_i=\widetilde{t}_i\widetilde{\theta}_j.
\end{eqnarray*} 
The realizations of $\osp$ in definitions \ref{Xspin1} and \ref{Xspin2} can be defined in the exact same way on this algebra, this corresponds to a special case of the oscillator realization in \cite{MR1037401}. The realization of $\osp$ in definitions \ref{Xspin1} and \ref{Xspin2} could also be defined by substituting $(-1)^{\sum_{k=1}^d\widetilde{\theta}_k\partial_{\widetilde{\theta}_k}}\widetilde{t}_i$ for $t_i$ and $(-1)^{\sum_{k=1}^d\widetilde{\theta}_k\partial_{\widetilde{\theta}_k}}\widetilde{\theta}_j$ for $\theta_j$. These operators on $\widetilde{\Lambda}_{d|n}$ generate an algebra isomorphic to $\Lambda_{d|n}$. This corresponds to the oscillator realization in chapter 29 of \cite{MR1773773} and the Howe duality $\osp \supset \mathfrak{sp}(2)\times \mathfrak{osp}(n|2\lfloor m/4\rfloor )$ in \cite{OSpHarm} if $\lfloor m/2\rfloor$ is even.
\end{remark}

From the fact that the spinor representations are completely pointed we obtain immediately that the tensor products with finite dimensional representations have bounded multiplicities. For the case $\mathfrak{sp}(2n)$ all infinite dimensional representations with bounded multiplicities can be obtained in this way, see \cite{MR1615943}.
\begin{corollary}
\label{bounded}
Assume the tensor product $\mS_{m|2n}\otimes L_\Lambda^{m|2n}$, for $L_\Lambda^{m|2n}$ a certain irreducible finite dimensional $\osp$-representation, is completely reducible. Each irreducible representation appearing in this decomposition is infinite dimensional and has bounded dimensions of its weight spaces. An upper bound for the dimension of the weight spaces is given by $dim_\mC\left( L_\Lambda^{m|2n}\right)$.
\end{corollary}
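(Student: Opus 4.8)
The plan is to derive both assertions, the bound on the weight multiplicities and the infinite-dimensionality, by comparing the weights of the two tensor factors, and then to transfer the statement from the whole tensor product to its irreducible constituents.

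First I would bound the weight spaces of $W:=\mS_{m|2n}\otimes L_\Lambda^{m|2n}$ directly. For a weight $\nu$ one has the usual decomposition $W_\nu=\bigoplus_{\kappa}(\mS_{m|2n})_{\nu-\kappa}\otimes (L_\Lambda^{m|2n})_{\kappa}$, where $\kappa$ ranges over the finitely many weights of the finite dimensional module $L_\Lambda^{m|2n}$. Since $\mS_{m|2n}$ is completely pointed by Theorem \ref{supercompp}, each space $(\mS_{m|2n})_{\nu-\kappa}$ is at most one-dimensional, so that
\begin{eqnarray*}
\dim_\mC W_\nu&=&\sum_{\kappa}\dim_\mC(\mS_{m|2n})_{\nu-\kappa}\,\dim_\mC (L_\Lambda^{m|2n})_{\kappa}\\
&\le&\sum_{\kappa}\dim_\mC (L_\Lambda^{m|2n})_{\kappa}=\dim_\mC L_\Lambda^{m|2n}.
\end{eqnarray*}
This already shows that $W$ itself has bounded multiplicities with the claimed bound, independently of complete reducibility. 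Assuming $W$ is completely reducible, every irreducible summand is a submodule, so its weight spaces sit inside the spaces $W_\nu$ and inherit the bound $\dim_\mC L_\Lambda^{m|2n}$.

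It then remains to rule out finite dimensional summands, and here I would argue with the $\delta$-components of the weights. By equation \eqref{weightmonomials} every weight of $\mS_{m|2n}$ has each $\delta_i$-coefficient equal to $-\tfrac12-\beta$ for some $\beta\in\mZ_{\ge 0}$, hence never an integer; this is the metaplectic half-integer shift carried by the symplectic spinor part in \eqref{decompSp}. On the other hand $L_\Lambda^{m|2n}$ is finite dimensional, so its restriction to $\mathfrak{sp}(2n)$ is finite dimensional and all its weights have integer $\delta_i$-coefficients. Adding the two, every weight of $W$ has all of its $\delta_i$-coefficients half-odd-integers, in particular never integers. Since a finite dimensional $\osp$-module restricts to a finite dimensional $\mathfrak{sp}(2n)$-module and therefore has only integer $\delta_i$-weights, no irreducible summand of $W$ can be finite dimensional; each is necessarily infinite dimensional.

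I expect the multiplicity bound to be an entirely routine computation, so the only genuine obstacle is the last step. Its content is the observation that finite dimensionality forces integrality of all $\mathfrak{sp}(2n)$-weights, a condition that is obstructed precisely by the half-integer shift of the spinor factor; once this weight-parity dichotomy is made explicit the exclusion of finite dimensional constituents is immediate.
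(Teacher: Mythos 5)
Your proof is correct and follows essentially the route the paper intends: the corollary is stated there without an explicit proof, as an immediate consequence of $\mS_{m|2n}$ being completely pointed, and your multiplicity count $\dim_\mC W_\nu\le\dim_\mC L_\Lambda^{m|2n}$ via the decomposition $W_\nu=\bigoplus_{\kappa}(\mS_{m|2n})_{\nu-\kappa}\otimes(L_\Lambda^{m|2n})_{\kappa}$ is exactly that immediate argument. Your observation that every $\delta_i$-coefficient of a weight of $W$ is a half-odd-integer, while finite dimensional $\osp$-modules restrict to finite dimensional $\mathfrak{sp}(2n)$-modules with integral weights, cleanly supplies the infinite-dimensionality claim that the paper leaves implicit.
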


The realizations of $\mathfrak{osp}(m|2n)$ in this section correspond to the so-called para-boson and para-fermion statistics, where the bosonic and fermionic fields mutually anti-commute, see e.g. \cite{MR0660015}.

\section{Restrictions on the primitive vectors}
\label{sectionrestr}

We want to study the tensor product of the spinor representations $\mS^{(\pm)}_{m|2n}$ with finite dimensional $\osp$-highest weight representations. In \cite{MR1297597} it was proven that the tensor product of the $\mathfrak{sp}(2n)$-spinors with finite dimensional representations are always completely reducible. Since the spinors for $\mathfrak{so}(m)$ are finite dimensional the tensor product with finite dimensional representations will also always be completely reducible. In the super case the spinor representation decomposes into the spinors for $\mathfrak{so}(m)$ and $\mathfrak{sp}(2n)$ as an $\mathfrak{so}(m)\oplus\mathfrak{sp}(2n)$-representation (see equation \eqref{decompSp}), while irreducible finite dimensional $\osp$-representations also decompose into irreducible finite dimensional $\mathfrak{so}(m)\oplus \mathfrak{sp}(2n)$-representaions. This implies that for $\osp$ the tensor product of the spinors with a finite dimensional representation will always be completely reducible as an $\mathfrak{so}(m)\oplus\mathfrak{sp}(2n)$-representation.

In this section, as a first step in the calculation of the decomposition of the tensor products we find restrictions on the weights and multiplicities for primitive vectors.

\begin{lemma}
\label{highestnotzero}
Consider the tensor products $\mS_{2d+1|2n}\otimes K^{2d+1|2n}_\Lambda$, $\mS^+_{2d|2n}\otimes K^{2d|2n}_\Lambda$ and $\mS^-_{2d|2n}\otimes K^{2d|2n}_\Lambda$, with $K^{m|2n}_\Lambda$ an irreducible finite dimensional highest weight representation of $\mathfrak{osp}(m|2n)$. If 
\begin{eqnarray}
\label{exprmax}
w^+&=&\sum_{k=1}^{\dim  K^{m|2n}_\Lambda} p_k\otimes v_k
\end{eqnarray}
(with $v_k$ a $\mC$-basis for $K^{m|2n}_\Lambda$ and $v_1$ the vector with weight $\Lambda$) is a nonzero primitive vector in this tensor product, then the element $p_1\in\mS_{m|2n}$ is not zero. This implies that there can be at most one primitive vector of a certain weight.
\end{lemma}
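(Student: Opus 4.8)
The plan is to play two structural facts against each other: that each spinor space $\mS_{m|2n}$ (or $\mS^{\pm}_{2d|2n}$) is \emph{completely pointed}, so all of its weight spaces are one-dimensional (Theorem \ref{supercompp} and the explicit basis \eqref{weightmonomials}), and that $K^{m|2n}_\Lambda$ is \emph{irreducible}, so its only primitive vector up to scalar is the highest weight vector $v_1$ of weight $\Lambda$, whose weight space is one-dimensional. The first preliminary observation is that, since $w^+$ is a weight vector of some fixed weight $\nu$ and the $v_k$ form a weight basis of $K^{m|2n}_\Lambda$, every nonzero $p_k$ is forced to be a weight vector of $\mS_{m|2n}$ of weight $\nu-\mathrm{wt}(v_k)$, and by complete pointedness it is then determined up to a scalar.

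To prove $p_1\neq0$ I would argue by contradiction, assuming $p_1=0$. The index set $\{k:p_k\neq0\}$ is finite and nonempty (as $w^+\neq0$); choose $k_*$ in it with $\mathrm{wt}(v_{k_*})$ maximal in the dominance order. Because $\Lambda$ occurs with multiplicity one in $K^{m|2n}_\Lambda$ and $p_1=0$ removes the only vector of weight $\Lambda$, we get $\mathrm{wt}(v_{k_*})\neq\Lambda$. Now apply an arbitrary $X_\alpha$, $\alpha\in\Delta^+$, and project the relation $X_\alpha w^+=0$ onto the $K$-weight space $\mathrm{wt}(v_{k_*})+\alpha$. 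Each term $X_\alpha p_k\otimes v_k$ lies in $K$-weight $\mathrm{wt}(v_k)$, and by maximality of $\mathrm{wt}(v_{k_*})$ no surviving $v_k$ has $\mathrm{wt}(v_k)=\mathrm{wt}(v_{k_*})+\alpha$, so these drop out; only the terms $(-1)^{|\alpha||p_k|}p_k\otimes X_\alpha v_k$ with $\mathrm{wt}(v_k)=\mathrm{wt}(v_{k_*})$ contribute.

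Here is where complete pointedness does the decisive work, and it is also the main obstacle I anticipate: if several basis vectors share the weight $\mathrm{wt}(v_{k_*})$, the surviving relation is a genuine tensor identity rather than a single vanishing product. But all the relevant $p_k$ have the common $\mS$-weight $\nu-\mathrm{wt}(v_{k_*})$, hence equal $c_k s$ for one fixed vector $s$ and scalars $c_k$; in particular they share a parity, the signs $(-1)^{|\alpha||p_k|}$ factor out, and the relation collapses to $s\otimes\left(\sum_k c_k X_\alpha v_k\right)=0$, i.e. $X_\alpha u=0$ with $u=\sum_k c_k v_k\neq0$. Since this holds for every $\alpha\in\Delta^+$ and the $X_\alpha$ span $\mathfrak{n}^+$, the nonzero weight vector $u$ is annihilated by $\mathfrak{n}^+$ and thus primitive in $K^{m|2n}_\Lambda$, of weight $\mathrm{wt}(v_{k_*})\neq\Lambda$. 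This contradicts irreducibility, so $p_1\neq0$.

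Finally, the uniqueness statement follows cheaply from what was just proved together with complete pointedness. If $w^+$ and $\tilde w^+$ are two primitive vectors of the same weight $\nu$, their $v_1$-coefficients $p_1$ and $\tilde p_1$ are both nonzero weight vectors of $\mS_{m|2n}$ of weight $\nu-\Lambda$, hence proportional; choosing $c$ so that $w^+-c\,\tilde w^+$ has vanishing $v_1$-component, the first part forces $w^+-c\,\tilde w^+=0$. Thus any two primitive vectors of a given weight are proportional, so there is at most one up to scalar.
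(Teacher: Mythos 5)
Your proof is correct and follows essentially the same route as the paper's: both isolate an extremal-weight component of $w^+$ (your maximal $K$-weight term is precisely the paper's minimal $\mS$-weight term, since the two weights sum to the fixed weight of $w^+$), use complete pointedness of the spinor factor to collapse that component to a single pure tensor $s\otimes u$ killed by each $X_\alpha$, and invoke irreducibility of $K^{m|2n}_\Lambda$ to identify the primitive vector $u$ with $v_1$; the uniqueness argument is identical. The only cosmetic difference is that you organize the expansion by $K$-weights and argue by contradiction, where the paper groups by $\mS$-weights and argues directly.
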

\begin{proof}
Consider a maximal vector $w^+$ in $\mS^{(\pm)}_{m|2n}\otimes K^{m|2n}_\Lambda$, which we can assume to be a weight vector. Equation \eqref{exprmax} can be rewritten as 
\begin{eqnarray*}
w^+&=&\sum_{i=p}^{N} q_i\otimes v^{(i)}
\end{eqnarray*}
with $N$ the amount of different weight spaces appearing in $K^{m|2n}_\Lambda$, $q_i$ elements of the basis of monomials in equation \eqref{weightmonomials}, such that weight$(q_i)<$ weight$(q_{i+1})$ and $v^{(i)}$ the unique (since $\mS^{(\pm)}_{m|2n}$ is completely pointed) element $K^{m|2n}_\Lambda$ that appears in the summation with weight equal to weight$(w^+)-$weight$(q_i)$, with $N\ge p\ge 1$ and $v^{(p)}\not=0$. Then we find 
\begin{eqnarray*}
X_\alpha w^+&=&\sum_{i=p}^{N} (X_{\alpha}q_i)\otimes v^{(i)}+\sum_{i=p}^{N} (-1)^{|X_\alpha||q_i|}q_i\otimes (X_{\alpha}v^{(i)})
\end{eqnarray*}
for each positive root $\alpha$. Since the term $q_p\otimes (X_{\alpha}v^{(p)})$ must be zero we find that $v^{(p)}$ is the maximal vector of $K^{m|2n}_\Lambda$, so $p=1$ and the first part of the lemma is proven.

If there were two primitive vectors in $\mS^{(\pm)}_{m|2n}\otimes K^{m|2n}_\Lambda$ with the same weight, adding them up with suitable constants would yield a primitive vector without $v_1$. This is impossible by the first part of the lemma.
\end{proof}

Using this lemma we can restrict the possible weights of primitive vectors that will appear inside the tensor product $\mS_{m|2n}\otimes L^{m|2n}_\Lambda$, again we assume that $k_{d}=0$ implies $k_{d-1}=0$ for $\mathfrak{so}(2d)$.

\begin{theorem}
\label{posmax}
Consider the tensor product $\mS_{m|2n}\otimes K^{m|2n}_\Lambda$, with $d=\lfloor m/2\rfloor$ and $K^{m|2n}_\Lambda$ a finite dimensional irreducible representation with highest weight
\begin{eqnarray*}
\Lambda=\lambda+\kappa&=&\sum_{j=1}^dk_j\epsilon_j+\sum_{i=1}^nt_i\delta_i\qquad\mbox{(where $t_{k_d+1}=0$ must hold if $k_d<n$ by consistency)}\\
&=&\sum_{j=1}^d\lambda_j \omega_j+\sum_{i=1}^n\kappa_j\nu_j.
\end{eqnarray*}
If $w^+$ is a nonzero primitive vector in this tensor product then $w^+$ must have a weight of the form
\[
\Lambda-\mu-\rho+\omega_d-\frac{1}{2}\nu_n\]
with $\mu\in I_\lambda$ given in equation \eqref{setweightsclass} and $\rho=\sum_{i=1}^n l_i\delta_i$ satsfying $0\le l_i\le \kappa_i$ for $1\le i< n$ and 
\[l_n\le \begin{cases}
  2\kappa_n &\mbox{if }m=2d+1\\
  2\kappa_n+1&\mbox{if }m=2d.
 \end{cases}
 \]
\end{theorem}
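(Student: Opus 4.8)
The plan is to combine Lemma~\ref{highestnotzero} with the observation that a primitive vector is automatically a highest weight vector for the even subalgebra $\mathfrak{so}(m)\oplus\mathfrak{sp}(2n)$, and then to read the two halves of the weight constraint off the classical spinor decompositions of Section~\ref{classicalspinor}.

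First I would fix the shape of the weight. By Lemma~\ref{highestnotzero} the coefficient $p_1$ of the highest weight vector $v_1\in K^{m|2n}_\Lambda$ is nonzero, and since $\mS_{m|2n}$ is completely pointed, each weight component of $w^+$ is one dimensional, so $p_1$ is (up to a scalar) a single monomial $\theta_1^{\gamma_1}\cdots\theta_d^{\gamma_d}t_1^{\beta_1}\cdots t_n^{\beta_n}$. Using \eqref{weightmonomials}, the weight of $w^+$ equals $\Lambda$ plus the weight of this monomial, which I would write as $\Lambda+\omega_d-\frac12\nu_n-\mu-\rho$ with $\mu=\sum_j\gamma_{d-j+1}\epsilon_j$ and $\rho=\sum_i\beta_{n-i+1}\delta_i$. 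Since the $\theta_j$ anticommute and the $t_i$ commute, this forces $\mu=\sum_j i_j\epsilon_j$ with $i_j\in\{0,1\}$ and $\rho=\sum_i l_i\delta_i$ with $l_i\ge0$, giving both the stated form of the weight and the lower bounds $0\le l_i$.

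Next I would extract the two dominance conditions from the even part. As $w^+$ is killed by every positive root vector, in particular by the positive root vectors of $\mathfrak{so}(m)\oplus\mathfrak{sp}(2n)$, it is a highest weight vector for this even subalgebra. By \eqref{decompSp} the module it generates lies in a summand $\left(\mS_{m|0}\otimes L^{m|0}_\lambda\right)\times\left(\mS^{\pm}_{0|2n}\otimes L^{0|2n}_\kappa\right)$, where the top orthogonal and symplectic constituents $L^{m|0}_\lambda$ and $L^{0|2n}_\kappa$ of $K^{m|2n}_\Lambda$ enter precisely because $p_1\neq0$. On the orthogonal side the weight $\lambda+\omega_d-\mu$ must be $\mathfrak{so}(m)$-dominant, which is exactly the computation \eqref{domconditie} from the proof of Theorem~\ref{classdecomp}; hence $\mu\in I_\lambda$. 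On the symplectic side the weight $\kappa-\frac12\nu_n-\rho$ must be a highest weight occurring in $\mS^{\pm}_{0|2n}\otimes L^{0|2n}_\kappa$, and the upper bounds on the $l_i$ are then governed by the decomposition of such tensor products obtained in \cite{MR1297597}, the parity $\pm$ being dictated by \eqref{decompSp} and the $\mathfrak{so}(m)$-degree of the monomial $p_1$.

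The delicate point, which I expect to be the main obstacle, is the sharp bound on the last coefficient $l_n$ together with the discrepancy between $m=2d$ and $m=2d+1$. Dominance for the Levi $\mathfrak{gl}(n)$, i.e. the even roots $\delta_i-\delta_{i+1}$, which act through finite dimensional $\mathfrak{sl}(2)$-strings on the oscillator variables, yields only the weaker inequalities $l_i-l_{i+1}\le\kappa_i$; the stronger bounds $l_i\le\kappa_i$ therefore genuinely require the finer structure of the symplectic spinor tensor products of \cite{MR1297597} and not dominance alone. For the final variable the governing subalgebra is determined by the last simple root, which is the odd root $\delta_n$ when $m=2d+1$ and the even root $2\delta_n$ when $m=2d$; the additional constraint $X_{\delta_n}w^+=0$, available only in the odd case, is exactly what tightens the bound from $l_n\le2\kappa_n+1$ to $l_n\le2\kappa_n$. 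Carrying this out carefully, keeping track of the passage between the $\delta_i$- and $\nu_i$-coordinates and of the even/odd parity bookkeeping forced by \eqref{decompSp}, is where the real work lies.
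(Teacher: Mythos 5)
Your opening step (reading the weight off the monomial $p_1$ via Lemma~\ref{highestnotzero} and complete pointedness) coincides with the paper's, but your main mechanism is genuinely different. The paper never passes to the even subalgebra: for each simple root $\alpha_k$ it projects the equation $X_{\alpha_k}w^+=0$ onto the string $\{(Y_{\alpha_k})^s v_1\}$ (legitimate because the weight space of $K^{m|2n}_\Lambda$ at $\Lambda-s\alpha_k$ is spanned by $(Y_{\alpha_k})^s v_1$, $\alpha_k$ being simple), uses finite dimensionality in the form $(Y_{\alpha_j})^{\lambda_j+1}v_1=0$, $(Y_{\alpha_{d+i}})^{\kappa_i+1}v_1=0$, $(Y_{\alpha_{d+n}})^{2\kappa_n+1}v_1=0$, and then reads all the exponent bounds for $p_1$ off the explicit realization of $X_{\alpha_k}$ on $\Lambda_{d|n}$; this treats the $\gamma_j$-bounds ($\mu\in I_\lambda$), the bounds $l_i\le\kappa_i$, and the $l_n$-bound uniformly and self-containedly. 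Your route — project the even-highest weight vector $w^+$ onto the even summand meeting $p_1\otimes v_1$ (note it is the projection that is nonzero; the module generated by $w^+$ does not lie in that summand), then quote Theorem~\ref{classdecomp} on the orthogonal side and \cite{MR1297597} on the symplectic side — is sound for $\mu\in I_\lambda$, and your observation that mere dominance only yields $l_i-l_{i+1}\le\kappa_i$ is correct; but it makes the bounds $l_i\le\kappa_i$ rest entirely on the precise (uncited, unverified) form of the decomposition theorem in \cite{MR1297597}, where the paper instead proves them directly.

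The genuine gap is the bound $l_n\le 2\kappa_n$ for $m=2d+1$, the one place where the statement distinguishes $m$ odd from $m$ even. As you yourself note, your even-subalgebra mechanism is structurally unable to produce it: by \eqref{decompSp} both $\mS^+_{0|2n}$ and $\mS^-_{0|2n}$ occur inside $\mS_{2d+1|2n}$, so the projection argument can only cap $l_n$ at $2\kappa_n+1$. You correctly name the missing ingredient, $X_{\alpha_{d+n}}w^+=0$ with $\alpha_{d+n}=\delta_n$ odd, but then stop, declaring that carrying it out is ``where the real work lies.'' That deferred step is precisely the paper's core computation, not a routine verification: one needs the $\mathfrak{osp}(1|2)$-string relations $X_{\alpha_{d+n}}(Y_{\alpha_{d+n}})^{2l}v_1=-l\,(Y_{\alpha_{d+n}})^{2l-1}v_1$ and $X_{\alpha_{d+n}}(Y_{\alpha_{d+n}})^{2l+1}v_1=(\kappa_n-l)(Y_{\alpha_{d+n}})^{2l}v_1$, giving $(Y_{\alpha_{d+n}})^{2\kappa_n+1}v_1=0$; then the projection of $X_{\alpha_{d+n}}w^+=0$ onto the string components forces the recursion $q_{s+1}\sim\partial_{t_1}q_s$ to terminate, i.e. $\partial_{t_1}^{2\kappa_n+1}p_1=0$, which by \eqref{weightmonomials} is exactly $l_n=\beta_1\le2\kappa_n$. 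Without executing this, your proposal establishes only the weaker bound $l_n\le2\kappa_n+1$ in the odd case, so the case distinction in the theorem remains unproved.
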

\begin{proof}
First we consider the case $m=2d+1$. The vector $w^+$ is of the form $q_0\otimes v_1+\sum_{l=2}^{\dim K^{2d+1|2n}_\Lambda}p_l\otimes v_l$, with $v_1$ the highest weight vector of $K^{2d+1|2n}_\Lambda$. The element $q_0\in\Lambda_{d|n}$ is not zero because of Lemma \ref{highestnotzero}. For each $k=1,\cdots,d+n$, we define
\begin{eqnarray*}
w^+_k&=&q_0\otimes v_1+\sum_{s=1}^\infty q_s^{(k)}\otimes(Y_{\alpha_k})^sv_1
\end{eqnarray*}
as a part in the summation corresponding to $w^+$. The summations are in fact finite. From
\begin{eqnarray*}
X_{\alpha_j}\left(Y_{\alpha_j}\right)^lv_1&=&l(\lambda_j-l+1)\left(Y_{\alpha_j}\right)^{l-1}v_1\qquad\mbox{for}\quad 1\le j<d\\
X_{\alpha_{d+i}}\left(Y_{\alpha_{d+i}}\right)^lv_1&=&l(\kappa_i-l+1)\left(Y_{\alpha_{d+i}}\right)^{l-1}v_1\qquad\mbox{for}\quad 1\le i<n\\
X_{\alpha_{d+n}}\left(Y_{\alpha_{d+n}}\right)^{2l}v_1&=&-l\left(Y_{\alpha_{d+n}}\right)^{2l-1}v_1\quad\mbox{ and }\quad X_{\alpha_{d+n}}\left(Y_{\alpha_{d+n}}\right)^{2l+1}v_1=(\kappa_n-l)\left(Y_{\alpha_{d+i}}\right)^{2l}v_1,
\end{eqnarray*}
the fact that $K^{m|2n}_\Lambda$ is irreducible and $Y_{\alpha_d}^2=\frac{1}{2}[Y_{\alpha_d},Y_{\alpha_d}]=0$ we find the following results:
\begin{eqnarray*}
\left(Y_{\alpha_j}\right)^{\lambda_j+1}v_1=0\mbox{ for }1\le j< d && \left(Y_{\alpha_d}\right)^{2}v_1=0\\
\left(Y_{\alpha_{d+i}}\right)^{\kappa_i+1}v_1=0\mbox{ for }1\le i< n && \left(Y_{\alpha_{d+n}}\right)^{2\kappa_n+1}v_1=0.
\end{eqnarray*}
Therefore we can restrict the summation in $w^+_k$. Now because of the form of $w^+_k$, $X_{\alpha_k}w^+=0$ implies $X_{\alpha_k}w^+_k=0$.

We know that $q_0= \theta_1^{\gamma_1}\theta_2^{\gamma_2}\cdots\theta_d^{\gamma_d}t_1^{\beta_1}\cdots t_n^{\beta_n}$ with $\gamma_j\in\{0,1\}$ and $\beta_i\in\mN$. Expressing $X_{\alpha_j}w_j^+=0$ for $1\le j< d$ leads to the equation
\begin{eqnarray*}
\theta_{d-j}\partial_{\theta_{d-j+1}}q_0&=&-\lambda_jq_1^{(j)}.
\end{eqnarray*}
Therefore if $\lambda_j=0$ for $j<d$, this implies $\gamma_{d-j+1}=0$ or $\gamma_{d-j}=1$, which is equivalent with $\gamma_{d-j+1}-\gamma_{d-j}\le 0$. The same reasoning for $X_{\alpha_d}$ leads to the result that $k_d+l_1=0$ implies $\gamma_1=0$, which can be simplified, since $k_d=0$ implies $l_1=0$. These condition can be rewritten as
\begin{eqnarray}
\label{condgamma}
\gamma_{d-j+1}-\gamma_{d-j}\le \lambda_j\quad\mbox{for}\quad 1\le j\le d-1&\mbox{and}&2\gamma_1\le \lambda_d.
\end{eqnarray}

Considering $k>d$ but $k<d+n$ yields $q_j^{(k)}\sim (t_{n-i}\partial_{t_{n-1+1}})^jq_0$, which implies the condition $\partial_{t_{n-i+1}}^{\kappa_i+1}q_0=0$ holds, so we find that $\beta_{n-i+1}\le \kappa_i$ holds for $1\le i <n$. The case $d+n$ then implies $\beta_1\le 2\kappa_n$.

Equation \eqref{weightmonomials} shows that the weight of $q_0$ is given by 
\[-(\gamma_d\epsilon_1+\gamma_{d-1}\epsilon_2+\cdots+\gamma_1\epsilon_d)-(\beta_n\delta_1+\beta_{n-1}\delta_2+\cdots+\beta_1\delta_n)+\omega_d-\frac{1}{2}\nu_n,\]
which implies the condition obtained on $\{\gamma_j\}$ in equation \eqref{condgamma} are the same as on the corresponding weight $(\gamma_d\epsilon_1+\gamma_{d-1}\epsilon_2+\cdots+\gamma_1\epsilon_d)$ given in equation \eqref{setweightsclass} and the same statement holds for the $\beta_i$.

The reasoning remains identical for the case $m=2d$ only the approach to the last root vector $Y_{\alpha_{d+n}}$ changes slightly.
\end{proof}

In case $m=2d$ it can be seen immediately which weights correspond to primitive vectors in $\mS^+_{2d|2n}\otimes K^{2d|2n}_\Lambda$ and in $\mS^-_{2d|2n}\otimes K^{2d|2n}_\Lambda$, since the weight $\mu+\rho$ must appear in the representation $\mS^+_{2d|2n}$ or $\mS^-_{2d|2n}$.

\begin{remark}
\label{whysecondroot}
It would be more difficult to obtain a good restriction on the primitive vectors using the standard root system. This is a consequence of the fact that $\beta_n=\delta_n-\epsilon_1$ appears as a positive simple root. The corresponding positive root vector $\theta_d\partial_{t_1}$ is nilpotent while it should give information on the number of times the not nilpotent variable $t_1$ can appear.
\end{remark}

%\begin{remark}
%\label{posmaxrem}
%Lemma \ref{posmax} can easily be obtained for a general $\mathfrak{osp}(2d+1|2n)$-highest weight as well. Consider $\mS_{2d+1|2n}\otimes K^{2d+1|2n}_{\Lambda}$ with $\Lambda=\lambda+\kappa$, $\lambda=\sum_{j=1}^dk_j\epsilon_j$ an integral dominant $\mathfrak{so}(m)$-weight with $k_j$ integers and $\kappa=\sum_{i=1}^n\kappa_i\nu_i$ an integral dominant $\mathfrak{sp}(2n)$-weight such that $\Lambda$ is consistent (e.g. $k_d=0$ implies $\kappa=0$). If $w^+$ is a maximal vector inside this tensor product than it must be of weight
%\begin{eqnarray*}
%\Lambda-\mu-\rho+\omega_d-\frac{1}{2}\nu_n
%\end{eqnarray*}
%with $\mu\in I_\lambda$ and $\rho=\sum_{i=1}^n l_i\delta_i$ satsfying $0\le l_i\le \kappa_i$ for $1\le i\le n$.
%\end{remark}

\section{Decomposition of the tensor products}
\label{secDec}

In this section we will calculate the actual decomposition of the tensor products $\mS_{m|2n}\otimes L^{m|2n}_\Lambda$ for a large class of highest weights $\Lambda$. We will state the results both in the standard choice of positive roots and in our choice, see Section \ref{sectionroot}. All calculations and proofs will be done in our choice.

First we focus on the case $\mS_{m|2n}\otimes K_{k\epsilon_1}^{m|2n}$.
\begin{theorem}
\label{decomp2d}
For $j,l\in\mN$ with $1\le j\le n$ and $1\le l$ the tensor product decompositions of $D(d|n)=\mathfrak{osp}(2d|2n)$-representations 
\begin{eqnarray*}
L_{\omega_d-\frac{1}{2}\nu_n}^{2d|2n}\otimes L_{\nu_j}^{2d|2n}&=&L_{\nu_j+\omega_d-\frac{1}{2}\nu_n}^{2d|2n}\oplus L_{\nu_{j-1}+\omega_{d-1}-\frac{1}{2}\nu_n}^{2d|2n}\quad\mbox{if}\quad j+d\not=n+1\\
L_{\omega_d-\frac{1}{2}\nu_n}^{2d|2n}\otimes L_{l\epsilon_1+\nu_n}^{2d|2n}&=&L_{l\epsilon_1+\omega_d+\frac{1}{2}\nu_n}^{2d|2n}\oplus L_{(l-1)\epsilon_1+\omega_{d-1}+\frac{1}{2}\nu_n}^{2d|2n}\\
L_{\omega_{d-1}-\frac{1}{2}\nu_n}^{2d|2n}\otimes L_{\nu_j}^{2d|2n}&=&L_{\nu_j+\omega_{d-1}-\frac{1}{2}\nu_n}^{2d|2n}\oplus L_{\nu_{j-1}+\omega_d-\frac{1}{2}\nu_n}^{2d|2n}\quad\mbox{if}\quad  j+d\not=n+1\\
L_{\omega_{d-1}-\frac{1}{2}\nu_n}^{2d|2n}\otimes L_{l\epsilon_1+\nu_n}^{2d|2n}&=&L_{l\epsilon_1+\omega_{d-1}+\frac{1}{2}\nu_n}^{2d|2n}\oplus L_{(l-1)\epsilon_1+\omega_d+\frac{1}{2}\nu_n}^{2d|2n},
\end{eqnarray*}
hold. For $1\le k\in\mN$ and $C(n+1)=\mathfrak{osp}(2|2n)$ the decompositions into irreducible representations are
\begin{eqnarray*}
K_{\frac{1}{2}\epsilon-\frac{1}{2}\nu_n}^{2|2n}\otimes K_{k\epsilon}^{2|2n}&=&K_{(k+\frac{1}{2})\epsilon-\frac{1}{2}\nu_n}^{2|2n}\oplus K_{(k-\frac{1}{2})\epsilon+\nu_{n-1}-\frac{3}{2}\nu_n}^{2|2n}\\
K_{\frac{1}{2}\epsilon+\nu_{n-1}-\frac{3}{2}\nu_n}^{2|2n}\otimes K_{k\epsilon}^{2|2n}&=&K_{(k+\frac{1}{2})\epsilon+\nu_{n-1}-\frac{3}{2}\nu_n}^{2|2n}\oplus K_{(k-\frac{1}{2})\epsilon-\frac{1}{2}\nu_n}^{2|2n},
\end{eqnarray*}
if $k\not=n$. For $j,l\in\mN$ with $1\le j\le n$ and $1\le l$ the tensor product decompositions of $B(d|n)=\mathfrak{osp}(2d+1|2n)$-representations 
\begin{eqnarray*}
L_{\omega_d-\frac{1}{2}\nu_n}^{2d+1|2n}\otimes L_{\nu_j}^{2d+1|2n}&=&L_{\nu_j+\omega_d-\frac{1}{2}\nu_n}^{2d+1|2n}\oplus L_{\nu_{j-1}+\omega_{d}-\frac{1}{2}\nu_n}^{2d+1|2n}\\
L_{\omega_d-\frac{1}{2}\nu_n}^{2d+1|2n}\otimes L_{l\epsilon_1+\nu_n}^{2d+1|2n}&=&L_{l\epsilon_1+\omega_d+\frac{1}{2}\nu_n}^{2d+1|2n}\oplus L_{(l-1)\epsilon_1+\omega_{d}+\frac{1}{2}\nu_n}^{2d+1|2n},
\end{eqnarray*}
hold for $d>0$. For $j\in\mN$ with $1\le j <n$, the tensor product decompositions of $B(0|n)=\mathfrak{osp}(1|2n)$-representations
\begin{eqnarray*}
L_{-\frac{1}{2}\nu_n}^{1|2n}\otimes L_{\nu_j}^{1|2n}&=&L_{\nu_j-\frac{1}{2}\nu_n}^{1|2n}\oplus L_{\nu_{j-1}-\frac{1}{2}\nu_n}^{1|2n}\\
L_{-\frac{1}{2}\nu_n}^{1|2n}\otimes L_{\nu_n}^{1|2n}&=&L_{\frac{1}{2}\nu_n}^{1|2n}\oplus L_{\frac{1}{2}\nu_n-\delta_n}^{1|2n}\oplus L_{\frac{1}{2}\nu_n-2\delta_n}^{1|2n},
\end{eqnarray*}
hold. 
\end{theorem}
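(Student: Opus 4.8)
The plan is to run everything through Corollary \ref{ThmComRed}: once the primitive vectors of each product are located and shown to generate independent submodules, the decomposition is immediate. All work is done in our root system, and the highest weights are translated back to the distinguished system at the end via \eqref{KLold}, \eqref{KL} and Theorem \ref{changeroot}. Using \eqref{KLold}, every product on the list is $\mS^{(\pm)}_{m|2n}\otimes K^{m|2n}_{\Lambda}$ with $\Lambda=k\epsilon_1$ when $m>1$ (since $L_{\nu_j}=K_{j\epsilon_1}$ and $L_{l\epsilon_1+\nu_n}=K_{(l+n)\epsilon_1}$) and $\Lambda=\nu_k$ when $m=1$; the $\mS^-$ statements follow from the same argument applied to the realization in Definition \ref{Xspin2}, so I concentrate on $\mS$ and $\mS^+$.

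First I would feed this into Theorem \ref{posmax}. For $m>1$ we have $\lambda=k\epsilon_1$ with vanishing symplectic part, so $I_\lambda=\{0,\epsilon_1\}$, while the admissible symplectic shift $\rho=\sum_il_i\delta_i$ collapses to $\rho=0$ when $m=2d+1$ (there $l_n\le 2\kappa_n=0$) and to $\rho\in\{0,\delta_n\}$ when $m=2d$ (there $l_n\le 2\kappa_n+1=1$). With Lemma \ref{highestnotzero} this leaves two candidate weights for $m$ odd, and four for $m$ even which the parity remark after Theorem \ref{posmax} distributes as two in $\mS^+\otimes K$ and two in $\mS^-\otimes K$. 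For $\mathfrak{osp}(1|2n)$ one instead has $\lambda=0$, $\kappa=\nu_k$, so $I_\lambda=\{0\}$ and $\rho$ ranges over $\{0,\delta_k\}$ if $k<n$ but over $\{0,\delta_n,2\delta_n\}$ if $k=n$ (because then $\kappa_n=1$ lifts the bound to $l_n\le 2$); this extra candidate is exactly the source of the three-term decomposition. In every case the product of the two highest weight vectors, of weight $\Lambda+\omega_d-\tfrac12\nu_n$, is a genuine primitive vector heading the first summand.

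The independence hypothesis of Corollary \ref{ThmComRed} I would verify with the quadratic Casimir $\cC_2$: it acts on $\cU(\mg)\cdot v_k^+$ by $\langle\lambda_k,\lambda_k+2\rho\rangle$, so distinct eigenvalues force $v_j^+\notin\cU(\mg)\cdot v_k^+$. Writing the two candidate highest weights as $\Lambda_1$ and $\Lambda_1-\xi$, the eigenvalue difference equals $\langle\xi,\Lambda_1+(\Lambda_1-\xi)+2\rho\rangle$, and its behaviour is dictated by the shift $\xi$: for $B(d|n)$ with $d>0$ (and for $B(0|n)$, $k<n$) the shift is a non-isotropic odd root and the difference is a half-integer, hence never zero, so no exclusion is needed; for the even orthogonal cases the shift is an isotropic odd root and the difference is an integer that vanishes precisely at $j+d=n+1$ (which for $C(n+1)$, where $d=1$, reads $k=n$); for the remaining even case $k>n$ the shift is an even root and the difference is again nonzero. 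At the excluded values the two candidates share a Casimir eigenvalue, Corollary \ref{ThmComRed} no longer applies, and the module is governed instead by the mechanism of Theorem \ref{Simplecasencr}; these are exactly the cases the statement sets aside. When the eigenvalues are distinct, Corollary \ref{ThmComRed} yields $W\cong\bigoplus_i K_{\Lambda_i}$ over whichever candidates actually carry primitive vectors, and the three pairwise-distinct eigenvalues in the $\mathfrak{osp}(1|2n)$, $k=n$ case produce the asserted three-term sum.

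It remains to confirm that every claimed candidate weight genuinely supports a primitive vector, and this is where I expect the real work to lie. Theorem \ref{posmax} only bounds the primitive spectrum from above, so I would establish existence by restricting to $\mathfrak{so}(m)\oplus\mathfrak{sp}(2n)$, where the tensor product is completely reducible: using the spinor decomposition \eqref{decompSp}, the orthogonal rule of Theorem \ref{classdecomp}, and the symplectic-spinor tensor decompositions of \cite{MR1297597}, one computes the $\mathfrak{so}(m)\oplus\mathfrak{sp}(2n)$-content of $W$ and checks that it coincides with the sum of the contents of the proposed summands. Since the leading module $K_{\Lambda_1}$ accounts for only part of this content, the missing content forces the further primitive vector(s), which by the weight restriction can only sit at the remaining candidate weight(s); this simultaneously proves existence and rules out any unlisted summand. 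The delicate points in this bookkeeping are matching the half-integer symplectic weights across \eqref{decompSp} and keeping the two spinor halves $\mS^+$ and $\mS^-$ separate in the even case.
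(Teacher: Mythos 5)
Your overall architecture is exactly the paper's: reduce everything through \eqref{KLold} to products $\mS^{(\pm)}_{m|2n}\otimes K^{m|2n}_{k\epsilon_1}$ (resp. $K^{1|2n}_{\nu_k}$), bound the primitive vectors by Lemma \ref{highestnotzero} and Theorem \ref{posmax}, and feed Corollary \ref{ThmComRed}. The divergence is in the two substitutions you make for the paper's explicit computations, and both contain genuine gaps. The first is fatal for one of the cases you must cover: for $\mathfrak{osp}(1|2n)$ with $k=n$ the three candidate weights are $\Lambda_1=\frac{1}{2}\nu_n$, $\Lambda_2=\frac{1}{2}\nu_n-\delta_n$ and $\Lambda_3=\frac{1}{2}\nu_n-2\delta_n$, and the Casimir eigenvalue difference between $\Lambda_1$ and $\Lambda_3$ is $\langle 2\delta_n,\Lambda_1+\Lambda_3+2\rho\rangle=-1+2-1=0$: both eigenvalues equal $-\frac{n^2}{4}-\frac{n}{8}$. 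This is no accident, since $\Lambda_3=s_{2\delta_n}\cdot\Lambda_1$ under the dot action, so the two weights are linked and no central-character argument can ever separate them. Your claim of ``three pairwise-distinct eigenvalues'' is therefore false, and your method proves nothing in precisely the three-term case. The paper's proof is immune to this: it constructs the candidate primitive vectors explicitly as combinations $\sum_j C_j\,a_j\otimes b_j$ with $a_j=Y_{\alpha_j}\cdots Y_{\alpha_{d+n}}1$ and $b_j=Y_{\alpha_{j-1}}\cdots Y_{\alpha_1}v_1$, verifies $X_{\alpha_k}w=0$ by hand, and then determines by direct computation which vectors of that weight lie in $\cU(\mg)\cdot(1\otimes v_1)$ (the linear criterion $\sum_j(-1)^jC_j=0$). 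Membership is decided by that linear condition, not by eigenvalues; for $m=2d$ this same computation is what isolates the exceptional value $k+d=n+1$.

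The second gap is your existence step. To ``check that the $\mathfrak{so}(m)\oplus\mathfrak{sp}(2n)$-content of $W$ coincides with the sum of the contents of the proposed summands'' you would need the $\mathfrak{so}(m)\oplus\mathfrak{sp}(2n)$-branching of the infinite-dimensional irreducible modules $K_{\Lambda_i}$, which is available nowhere in the paper and is essentially of the same difficulty as the theorem itself. Moreover, ``missing content forces further primitive vectors'' is not automatic in a module not yet known to be completely reducible: extra content need not produce extra singular vectors, so the argument would have to be routed through Theorem \ref{dimensequal} by exhibiting a vector of the candidate weight that does not lie in $\mathfrak{n}^-\cdot W$ --- which again calls for the explicit computation you are trying to avoid. (The paper does run a branching-flavored argument of this kind in Theorem \ref{finalthmdecomp}, but only after complete reducibility is secured, and only because there the candidate weights differ by $\mathfrak{so}(m)$-weights; in the present theorem they differ by $\epsilon_1+\delta_n$ when $m=2d$, which is exactly why that trick is unavailable here and why it must fail, given Theorem \ref{tensornotcr}.) A minor further slip: your root-type bookkeeping is off in places --- for $B(d|n)$ with $k>n$ the shift is the even root $\epsilon_1$, not a non-isotropic odd root --- although your nonvanishing conclusions happen to be correct there.
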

\begin{proof}
First we assume $d>0$. Applying the technique of odd reflections and equation \eqref{KLold} show that the statements in the theorem are equivalent with
\begin{eqnarray}
\label{2ndchoiceeasyprod}
K_{\omega_d-\frac{1}{2}\nu_n}^{2d|2n}\otimes K_{k\epsilon_1}^{2d|2n}&=&K_{k\epsilon_1+\omega_d-\frac{1}{2}\nu_n}^{2d|2n}\oplus K_{(k-1)\epsilon_1+\omega_d+\nu_{n-1}-\frac{3}{2}\nu_n}^{2d|2n}\qquad\mbox{if}\quad k+d\not=n+1\\
\label{2ndchoiceeasyprodb}
K_{\omega_d+\nu_{n-1}-\frac{3}{2}\nu_n}^{2d|2n}\otimes K_{k\epsilon_1}^{2d|2n}&=&K_{k\epsilon_1+\omega_d+\nu_{n-1}-\frac{3}{2}\nu_n}^{2d|2n}\oplus K_{(k-1)\epsilon_1+\omega_d-\frac{1}{2}\nu_n}^{2d|2n}\qquad\mbox{if}\quad k+d\not=n+1\\
\label{2ndchoiceeasyprodc}
K_{\omega_d-\frac{1}{2}\nu_n}^{2d+1|2n}\otimes K_{k\epsilon_1}^{2d+1|2n}&=&K_{k\epsilon_1+\omega_d-\frac{1}{2}\nu_n}^{2d+1|2n}\oplus K_{(k-1)\epsilon_1+\omega_d-\frac{1}{2}\nu_n}^{2d+1|2n}.
\end{eqnarray}

According to Theorem \ref{posmax} the only two possible primitive vectors in the tensor product on the left-hand side of \eqref{2ndchoiceeasyprod} have weight $k\epsilon_1+\omega_d-\frac{1}{2}\nu_n$ and $(k-1)\epsilon_1+\omega_d+\nu_{n-1}-\frac{3}{2}\nu_n$. The first one obviously appears as $1\otimes v_1$. We explicitly prove that the second one also appears and show that it is not an element of $\cU(\mathfrak{osp}(2d|2n))\cdot (1\otimes v_1)$  when $k+d\not=n+1$. Equation \eqref{2ndchoiceeasyprod} then follows from Corollary \ref{ThmComRed}.

For $j=1,\cdots,d+n+1$ we define non-zero vectors
\begin{eqnarray*}
a_j&=&Y_{\alpha_j}Y_{\alpha_{j+1}}\cdots Y_{\alpha_{d+n}}1\in \mS_{2d|2n}^+\\
b_j&=&Y_{\alpha_{j-1}}Y_{\alpha_{j-2}}\cdots Y_{\alpha_1}v_1\in K_{k\epsilon_1}^{2d|2n}
\end{eqnarray*}
for $v_1$ the highest weight vector of $K_{k\epsilon_1}^{2d|2n}$.

A few calculations then yield the result
\[X_{\alpha_j}a_j=\begin{cases}
a_{j+1}&\mbox{if }j<d+n-1\\
2a_{j+1}&\mbox{if }j=d+n-1\\
-\frac{1}{2}a_{j+1}&\mbox{if }j=d+n
\end{cases}\] 
and
\[X_{\alpha_j}b_{j+1}=\begin{cases}
kb_{j}&\mbox{if }j=1\\
b_{j}&\mbox{if }1 < j<d\mbox{ or }d<j\le d+n\\
-b_j&\mbox{if }j=d.
\end{cases}\] 
It can also be checked that $X_{\alpha_k}a_j=0$ if $k\not=j$ and $X_{\alpha_k}b_{j}=0$ if $k\not=j+1$.

We define the vector $w\in K_{\omega_d-\frac{1}{2}\nu_n}^{2d|2n}\otimes K_{k\epsilon_1}^{2d|2n}$ as
\begin{eqnarray*}
w&=&a_1\otimes b_1+\frac{1}{k}\sum_{j=1}^{d-1}(-1)^j a_{j+1}\otimes b_{j+1}+\frac{(-1)^d}{k}\sum_{i=1}^{n-1}(-1)^ia_{d+i}\otimes b_{d+i}\\
&+&\frac{2(-1)^{d+n}}{k}a_{d+n}\otimes b_{d+n}+\frac{(-1)^{d+n}}{k}a_{d+n+1}\otimes b_{d+n+1}.
\end{eqnarray*}

The previous calculations then yield
\begin{eqnarray*}
X_{\alpha_1}w&=&a_2\otimes b_1+\frac{1}{k}(-1)ka_2\otimes b_1=0\\
kX_{\alpha_k}w&=&(-1)^{k-1}a_{k+1}\otimes b_{k}+ (-1)^ka_{k+1}\otimes b_k=0\quad\mbox{for } 1 < k<d\\
kX_{\alpha_d}w&=&(-1)^{d-1}a_{d+1}\otimes b_d +(-1)^{d+1} a_{d+1}\otimes (-b_{d})=0\\
kX_{\alpha_{d+i}}w&=&(-1)^{d+i}a_{d+i+1}\otimes b_{d+i}+(-1)^{d+i+1}a_{d+i+1}\otimes b_{d+i+1}=0\quad\mbox{for }1\le i <n-1\\
kX_{\alpha_{d+n-1}}w&=&(-1)^{d+n-1}2a_{d+n}\otimes b_{d+n-1}+2(-1)^{d+n}a_{d+n}\otimes b_{d+n-1}=0\\
kX_{\alpha_{d+n}}w&=&2(-1)^{d+n}(-1/2)a_{d+n+1}\otimes b_{d+n}+(-1)^{d+n}a_{d+n+1}\otimes b_{d+n}=0
\end{eqnarray*}
which shows that $w$ is the primitive vector of weight $(k-1)\epsilon_1+\omega_d+\nu_{n-1}-\frac{3}{2}\nu_n$.

Next we look at the vectors of weight $(k-1)\epsilon_1+\omega_d+\nu_{n-1}-\frac{3}{2}\nu_n$ inside $\cU(\mathfrak{osp}(2d|2n))\cdot (1\otimes v_1)$. These correspond to all the ways we can order $Y_{\alpha_1},\cdots,Y_{\alpha_{d+n}}$ such that the action on $1\otimes v_1$ does not give zero. The only vectors that can appear on the first spot are $Y_{\alpha_1}$ (which is non-zero on $v_1$) and $Y_{\alpha_{d+n}}$ (which is non-zero on $1\in \Lambda_{d|n}$). By continuing this we find the following possibilities:
\begin{eqnarray*}
Y_{\alpha_{j}}Y_{\alpha_{j+1}}\cdots Y_{\alpha_{d+n}}Y_{\alpha_{j-1}}Y_{\alpha_{j-2}}\cdots Y_{\alpha_1} (1\otimes v_1)&=&a_j\otimes b_j+a_{j+1}\otimes b_{j+1}
\end{eqnarray*}
for $1\le j\le d+n$.

A general element inside $K_{\frac{1}{2}\omega_d-\frac{1}{2}\nu_n}^{2d|2n}\otimes K_{k\epsilon_1}^{2d|2n}$ of weight $(k-1)\epsilon_1+\omega_d+\nu_{n-1}-\frac{3}{2}\nu_n$ is of the form
\begin{eqnarray*}
\sum_{j=1}^{d+n+1}C_{j}\,a_j\otimes b_j
\end{eqnarray*}
for arbitrary constants $C_j$. Such a vector is inside $\cU(\mathfrak{osp}(2d|2n))\cdot (1\otimes v_1)$ if and only if $\sum_{j=1}^{d+n+1}(-1)^j C_j=0$ holds according to the calculations above. A quick calculation therefore shows that $w\in\cU(\mathfrak{osp}(2d|2n))\cdot (1\otimes v_1)$ holds if and only if $k+d-n-1=0$ holds, which is exactly the case we excluded.

Equation \eqref{2ndchoiceeasyprodb} is proved using the same techniques, but the $a_j$ vectors are now derived from $t_1\in\mS^-_{2d|2n}$ and only the simple roots $\alpha_1,\cdots,\alpha_{d+n-1}$ play a role. The proof of equation \eqref{2ndchoiceeasyprodc} is very similar, but the calculations change slightly because there are two odd simple root vectors and more importantly now $X_{\alpha_{d+n-1}}a_{d+n-1}=a_{d+n}$ holds. This leads to the difference that the second primitive vector is never generated by the first for $\mathfrak{osp}(2d+1|2n)$, regardless of the value of $k$. Checking that $v_2^+\not\in\cU(\osp)\cdot v_1^+$ for these cases can also be done using the quadratic Casimir operator, as has been explained in Section \ref{criterion}.

The case $\mathfrak{osp}(1|2n)$ follows similarly.
\end{proof}
Note that for the case $C(n+1)=\mathfrak{osp}(2|2n)$, contrary to the case $D(d|n)$, Theorem \ref{decomp2d} does not have an analog for $\mathfrak{so}(2)$. The statements do not hold when we would substitute $n=0$.

Now we focus on the case excluded from Theorem \ref{decomp2d}. 
\begin{theorem}
\label{tensornotcr}
If $n\ge d$ the tensor products $\mS^\pm_{2d|2n}\otimes K^{2d|2n}_{(n-d+1)\epsilon_1}$ are indecomposable but not irreducible. The representation has subrepresentations
\begin{eqnarray*}
K^{2d|2n}_{\omega_d-\frac{1}{2}\nu_n}\otimes K^{2d|2n}_{(n-d+1)\epsilon_1}\supset V\supset K^{2d|2n}_{(n-d)\epsilon_1+\omega_d+\nu_{n-1}-\frac{3}{2}\nu_n},
\end{eqnarray*}
with $V$ an indecomposable representation satisfying
\begin{eqnarray*}
V/ K^{2d|2n}_{(n-d)\epsilon_1+\omega_d+\nu_{n-1}-\frac{3}{2}\nu_n}&\cong & K^{2d|2n}_{(n-d+1)\epsilon_1+\omega_d-\frac{1}{2}\nu_n}.
\end{eqnarray*}
For the other spinor space this is given by
\begin{eqnarray*}
K^{2d|2n}_{\omega_d+\nu_{n-1}-\frac{3}{2}\nu_n}\otimes K^{2d|2n}_{(n-d+1)\epsilon_1}\supset U\supset K^{2d|2n}_{(n-d)\epsilon_1+\omega_d-\frac{1}{2}\nu_n},
\end{eqnarray*}
with $U$ an indecomposable representation satisfying
\begin{eqnarray*}
U/ K^{2d|2n}_{(n-d)\epsilon_1+\omega_d-\frac{1}{2}\nu_n}&\cong & K^{2d|2n}_{(n-d+1)\epsilon_1+\omega_d+\nu_{n-1}-\frac{3}{2}\nu_n}.
\end{eqnarray*}
\end{theorem}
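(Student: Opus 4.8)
The plan is to read this off from Theorem~\ref{Simplecasencr} in the case $p=2$, using the primitive-vector analysis already carried out in the proof of Theorem~\ref{decomp2d}. First I would recall that, by Theorem~\ref{posmax} together with Lemma~\ref{highestnotzero}, the tensor product $K^{2d|2n}_{\omega_d-\frac{1}{2}\nu_n}\otimes K^{2d|2n}_{(n-d+1)\epsilon_1}$ admits at most two primitive vectors, of the strictly distinct weights $\lambda_1=(n-d+1)\epsilon_1+\omega_d-\frac{1}{2}\nu_n$ and $\lambda_2=(n-d)\epsilon_1+\omega_d+\nu_{n-1}-\frac{3}{2}\nu_n$. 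The first is realized by $1\otimes v_1$ and the second by the explicit vector $w$ constructed around equation~\eqref{2ndchoiceeasyprod}, so exactly $p=2$ primitive vectors occur. Since $\lambda_1-\lambda_2=\epsilon_1+\delta_n$ is a positive sum of simple roots, we have $\lambda_1>\lambda_2$.

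The crucial input, also established in the proof of Theorem~\ref{decomp2d}, is that $w\in\cU(\osp)\cdot(1\otimes v_1)$ holds precisely when $k+d-n-1=0$, i.e.\ exactly for $k=n-d+1$. This is the only value for which the hypothesis of Theorem~\ref{Simplecasencr} is met, now with $v_{p-1}^+=1\otimes v_1$ of weight $\lambda_1$ and $v_p^+=w$ of weight $\lambda_2$. Because $p-2=0$, the direct-sum summand in that theorem is empty, so its conclusion collapses to $W\cong P$ with $P\supset V\supset K^{2d|2n}_{\lambda_2}$, both $P$ and $V$ indecomposable, and $V/K^{2d|2n}_{\lambda_2}\cong K^{2d|2n}_{\lambda_1}$. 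This is exactly the asserted structure for $\mS^+_{2d|2n}\otimes K^{2d|2n}_{(n-d+1)\epsilon_1}$; in particular the whole tensor product $W=P$ is indecomposable yet reducible, since $K^{2d|2n}_{\lambda_2}$ is a proper nonzero subrepresentation.

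For $\mS^-_{2d|2n}\otimes K^{2d|2n}_{(n-d+1)\epsilon_1}$ I would run the identical argument, drawing the two primitive vectors instead from $t_1\otimes v_1$ and from the analogue of $w$ built in the proof of equation~\eqref{2ndchoiceeasyprodb}. The two highest weights $\omega_d-\frac{1}{2}\nu_n$ and $\omega_d+\nu_{n-1}-\frac{3}{2}\nu_n$ simply interchange roles, so Theorem~\ref{Simplecasencr} yields the stated indecomposable $U$ with $U\supset K^{2d|2n}_{(n-d)\epsilon_1+\omega_d-\frac{1}{2}\nu_n}$ and quotient $U/K^{2d|2n}_{(n-d)\epsilon_1+\omega_d-\frac{1}{2}\nu_n}\cong K^{2d|2n}_{(n-d+1)\epsilon_1+\omega_d+\nu_{n-1}-\frac{3}{2}\nu_n}$.

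I do not expect a genuine obstacle: all the real content, namely the enumeration of primitive vectors and the computation that the lower one lies in $\cU(\osp)\cdot(1\otimes v_1)$ exactly when $k+d=n+1$, was already completed for Theorem~\ref{decomp2d}, and Theorem~\ref{Simplecasencr} then packages the indecomposable filtration for free. The only point deserving a moment's care is the bookkeeping identifying $\lambda_1$ and $\lambda_2$ with the named highest weights and verifying $\lambda_1>\lambda_2$ in the chosen ordering, which I noted above.
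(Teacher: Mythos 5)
Your proposal is correct and follows essentially the same route as the paper, whose proof of this theorem is precisely the observation that it "follows immediately from Theorem \ref{Simplecasencr} and the proof of Theorem \ref{decomp2d}"; you have simply made explicit the instantiation $p=2$, the identification of the two primitive vectors $1\otimes v_1$ (resp.\ $t_1\otimes v_1$) and $w$, and the fact that $w\in\cU(\osp)\cdot(1\otimes v_1)$ exactly when $k+d-n-1=0$. The bookkeeping of weights and the ordering $\lambda_1>\lambda_2$ you verify is exactly what the paper leaves implicit.
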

\begin{proof}
This follows immediately from Theorem \ref{Simplecasencr} and the proof of Theorem \ref{decomp2d}.
\end{proof}

\begin{remark}
In a forthcoming paper we will prove that the decomposition series in this paper is complete, i.e. that 
\begin{eqnarray*}
\left(K^{2d|2n}_{(n-d+1)\epsilon_1}\otimes K^{2d|2n}_{\omega_d-\frac{1}{2}\nu_n}\right)/V&\cong& K^{2d|2n}_{(n-d)\epsilon_1+\omega_d+\nu_{n-1}-\frac{3}{2}\nu_n}\mbox{ and}\\
\left(K^{2d|2n}_{(n-d+1)\epsilon_1}\otimes K^{2d|2n}_{\omega_d+\nu_{n-1}-\frac{3}{2}\nu_n}\right)/U&\cong& K^{2d|2n}_{(n-d)\epsilon_1+\omega_d-\frac{1}{2}\nu_n}
\end{eqnarray*}
hold. This will be done by constructing an explicit realization of these tensor products in an analytical theory.
\end{remark}

Theorem \ref{tensornotcr} showed that the tensor product $ \mS_{2d|2n}\otimes K^{2d|2n}_{k\epsilon_1}$ is not always completely reducible. Similarly the case $\mS_{m|2n}\otimes K^{m|2n}_{k\epsilon_1+l\epsilon_2}$ will sometimes not be completely reducible, even for $m$ odd. However, we can prove the following lemma.
\begin{lemma}
\label{classcr2}
The tensor product
\begin{eqnarray*}
 \mS_{2d+1|2n}\otimes K_{k\epsilon_1+l\epsilon_2}^{2d+1|2n}
\end{eqnarray*}
is completely reducible unless $k+l=2+2n-2d$ holds.
\end{lemma}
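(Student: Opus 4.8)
The plan is to carry out the whole argument in the non-standard root system of Section~\ref{sectionroot}, in which $\mS_{2d+1|2n}=K^{2d+1|2n}_{\omega_d-\frac12\nu_n}$, by locating the possible primitive vectors of the tensor product and comparing their quadratic Casimir eigenvalues. First I would apply Theorem~\ref{posmax}. Since $\Lambda=k\epsilon_1+l\epsilon_2$ has no $\delta$-part, every $\kappa_i$ vanishes, so for $m=2d+1$ the $\delta$-shift $\rho=\sum_i l_i\delta_i$ of that theorem is forced to be $0$; hence a primitive vector can occur only at a weight $\Lambda+\omega_d-\frac12\nu_n-\mu$ with $\mu\in I_{k\epsilon_1+l\epsilon_2}$. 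Reading off $I_{k\epsilon_1+l\epsilon_2}$ from \eqref{setweightsclass} for $\lambda=k\epsilon_1+l\epsilon_2$ (so that $\lambda_1=k-l$, $\lambda_2=l$ and $\lambda_j=0$ for $j\ge3$) gives $\mu\in\{0,\epsilon_1,\epsilon_2,\epsilon_1+\epsilon_2\}$ when $k>l\ge1$, with $\epsilon_1$ omitted when $k=l$. By Lemma~\ref{highestnotzero} each of these weights carries at most one primitive vector, so there are at most four in total, and they are totally ordered in the dominance order.

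Next I would compute the Casimir eigenvalue $\langle\xi,\xi+2\rho\rangle$ on each candidate weight $\xi$, with $\rho$ and $\langle\cdot,\cdot\rangle$ as in Section~\ref{criterion}. All four weights share the $\delta$-part $-\frac12\nu_n$ and differ only in the $\epsilon_1,\epsilon_2$ directions, so each difference of eigenvalues collapses to $\langle\beta,\xi+\xi'+2\rho\rangle$ with $\beta\in\{\epsilon_1,\epsilon_2,\epsilon_1-\epsilon_2,\epsilon_1+\epsilon_2\}$. A direct evaluation shows that the four pairs whose weights differ by a single $\epsilon_j$ give half-integral differences, which can never vanish, while the pair differing by $\epsilon_1-\epsilon_2$ gives $k-l+1>0$. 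The only pair that can share a Casimir eigenvalue is therefore the extreme pair $\mu=0$ and $\mu=\epsilon_1+\epsilon_2$, whose difference is integral and vanishes precisely when $k+l=2+2n-2d$.

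Finally, suppose $k+l\neq2+2n-2d$. Then the (at most four) primitive vectors have pairwise distinct Casimir eigenvalues. Since $v_j^+\in\cU(\osp)\cdot v_k^+$ would force the eigenvalues on $v_j^+$ and $v_k^+$ to agree, distinctness yields $v_j^+\notin\cU(\osp)\cdot v_k^+$ for all $j\neq k$, and Corollary~\ref{ThmComRed} gives complete reducibility. The main obstacle is the Casimir bookkeeping of the middle paragraph: one must run through all comparable pairs, separate the half-integral differences (automatically nonzero) from the two integral ones, and check that exactly one of the latter---the extreme pair---can vanish and does so at the stated value. The value $k+l=2+2n-2d$ is exactly where the top and bottom weights fall into a single block, so that complete reducibility may genuinely fail; deciding that borderline case would require constructing the lowest primitive vector explicitly and testing whether it lies in $\cU(\osp)\cdot v_1^+$, in the manner of Theorems~\ref{decomp2d} and~\ref{tensornotcr}, but the statement at hand concerns only the complementary range.
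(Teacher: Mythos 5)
Your overall strategy is the paper's own: Theorem~\ref{posmax} (together with Lemma~\ref{highestnotzero}) cuts the candidate primitive vectors down to the four weights $\Lambda+\omega_d-\tfrac12\nu_n-\mu$ with $\mu\in\{0,\epsilon_1,\epsilon_2,\epsilon_1+\epsilon_2\}$, and Corollary~\ref{ThmComRed} finishes once the Casimir eigenvalues are pairwise distinct. The one place where you deviate from the paper --- evaluating the Casimir directly on the non-standard weights instead of first passing to the standard root system --- is exactly where your argument breaks. The element $\rho$ displayed in Section~\ref{criterion} is the Weyl vector of the \emph{standard} positive system (half-sum of even positive roots minus half-sum of the odd positive roots $\delta_i\pm\epsilon_j,\,\delta_i$); this is precisely why the paper first converts the four candidate weights into standard highest weights $\kappa_1,\dots,\kappa_4$ by odd reflections and only then computes $\langle\kappa_i,\kappa_i+2\rho\rangle$. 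The identity ``Casimir eigenvalue $=\langle\xi,\xi+2\rho\rangle$'' is valid only when the highest weight $\xi$ and $\rho$ refer to the same choice of positive roots, so pairing the non-standard weights with the $\rho$ of Section~\ref{criterion}, as you prescribe, does not compute Casimir eigenvalues at all.

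This is not a harmless bookkeeping slip, because the two Weyl vectors differ in the $\epsilon$-directions by exactly $n\sum_j\epsilon_j$: the Weyl vector of the non-standard system is $\rho'=\sum_{j=1}^d(d-j+\tfrac12-n)\epsilon_j+\sum_{i=1}^n(n-i+\tfrac12)\delta_i=\rho-n\sum_j\epsilon_j+d\sum_i\delta_i$ (for $m=2d+1$). For the pairs differing by a single $\epsilon_j$ your mixed recipe is off by $n$ but stays half-integral, so those conclusions survive, and for the pair differing by $\epsilon_1-\epsilon_2$ the shifts cancel; but for the decisive extreme pair, differing by $\epsilon_1+\epsilon_2$, the mixed recipe yields $k+l+2d-2$, never zero for $k,l\ge1$, instead of the correct $k+l+2d-2n-2$. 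Thus the threshold $k+l=2+2n-2d$ that you assert cannot emerge from the computation you describe; you would ``prove'' complete reducibility unconditionally, which is false. The repair is immediate: either stay in the non-standard system but use its own Weyl vector $\rho'$ (then every difference you quote is correct and coincides with the paper's list), or do as the paper does and reflect the four weights to the standard system first --- noting, however, that after odd reflections the four weights no longer differ only in the $\epsilon_1,\epsilon_2$-plane (the reflections shift them by different amounts; e.g.\ for $d=2$, $n=1$, $k=l=1$ one finds $\kappa_1-\kappa_3=2\delta_1$), so your ``collapse to $\langle\beta,\xi+\xi'+2\rho\rangle$ with $\beta$ an $\mathfrak{so}$-root'' simplification is only available in the non-standard picture.
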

\begin{proof}
Theorem \ref{posmax} implies that the possible primitive vectors have weight 
\[k\epsilon_1+l\epsilon_2+\omega_d-\frac{1}{2}\nu_n,\,  k\epsilon_1+(l-1)\epsilon_2+\omega_d-\frac{1}{2}\nu_n,\, (k-1)\epsilon_1+(l-1)\epsilon_2+\omega_d-\frac{1}{2}\nu_n\]
and $(k-1)\epsilon_1+l\epsilon_2+\omega_d-\frac{1}{2}\nu_n$ if $l<k$. Denote the corresponding highest weights in the standard root system by $\kappa_1$, $\kappa_2$, $\kappa_3$ and $\kappa_4$. The eigenvalues of the Casimir on such primitive vectors can be calculated as explained in Section \ref{criterion}, which yields
\begin{eqnarray*}
\langle \kappa_1,\kappa_1+2\rho\rangle& =& \left(d-n-\frac{1}{2}\right)(k+l)+\frac{1}{2}\left(k(k+1)+l(l-1)\right)+\frac{1}{8}(2d+1-2n)(d-n).
\end{eqnarray*}
The difference between these values for the different weights then becomes
\begin{eqnarray*}
\langle \kappa_1,\kappa_1+2\rho\rangle-\langle \kappa_2,\kappa_2+2\rho\rangle=d-n+l-\frac{3}{2}&\not=&0\\
\langle \kappa_1,\kappa_1+2\rho\rangle-\langle \kappa_3,\kappa_3+2\rho\rangle=k+l+2d-2n-2&\not=&0\quad \mbox{if}\quad k+l\not=2n+2-2d\\
\langle \kappa_1,\kappa_1+2\rho\rangle-\langle \kappa_4,\kappa_4+2\rho\rangle=d-n+k-\frac{1}{2}&\not=&0\\
\langle \kappa_2,\kappa_2+2\rho\rangle-\langle \kappa_3,\kappa_3+2\rho\rangle=d-n+k-\frac{1}{2}&\not=&0\\
\langle \kappa_2,\kappa_2+2\rho\rangle-\langle \kappa_4,\kappa_4+2\rho\rangle=k-l+1&\not=&0\quad \mbox{if}\quad k>l\\
\langle \kappa_3,\kappa_3+2\rho\rangle-\langle \kappa_4,\kappa_4+2\rho\rangle=\frac{3}{2}-d+n-l&\not=&0.
\end{eqnarray*}
Therefore the condition $k+l\not=2n+2-2d$ is sufficient to conclude complete reducibility by using Corollary \ref{ThmComRed}.
\end{proof}
To obtain the actual decomposition of this tensor product the existence of the primitive vectors needs to be proven. This will be done in case $k,l>n$ as part of the result in Theorem \ref{finaldecomp2}.

Theorem \ref{decomp2d} implied that $ \mS_{m|2n}\otimes K^{m|2n}_{k\epsilon_1}$ is always completely reducible for $k>n$. Lemma shows that $\mS_{2d+1|2n}\otimes K_{k\epsilon_1+l\epsilon_2}^{2d+1|2n}$ is completely reducible as well if $k,l >n$. It turns out that this condition can be extended, the tensor product $\mS_{m|2n}\otimes K^{m|2n}_{\sum_{j=1}^al_j\epsilon_j}$ with $a\le d$ and $l_j >n$ will always be completely reducible. In the following theorems we determine the decomposition into irreducible representations explicitly.

\begin{theorem}
\label{finalthmdecomp}
Consider an irreducible finite dimensional highest weight $\mathfrak{osp}(m|2n)$-representation
\begin{eqnarray*}
L_{\Lambda}^{m|2n}&\mbox{with}& \Lambda=\sum_{j=1}^a k_j\epsilon_j+a\nu_n \qquad\mbox{where}\quad a\le d \mbox{ and }k_j\ge 1,
\end{eqnarray*}
with $d=\lfloor m/2\rfloor$ and where $a=d-1$ is not allowed for $m=2d$. The decomposition
\begin{eqnarray*}
L^{2d+1|2n}_{\omega_d-\frac{1}{2}\nu_n}\otimes L^{2d+1|2n}_{\Lambda}&=&\bigoplus_{\kappa\in I_\lambda}  L^{2d+1|2n}_{\Lambda-\kappa+\omega_d-\frac{1}{2}\nu_n}
\end{eqnarray*}
holds for $\lambda=\sum_{j=1}^a k_j\epsilon_j$ and $I_\lambda$ given in equation \eqref{setweightsclass}. The decompositions
\begin{eqnarray*}
L^{2d|2n}_{\omega_d-\frac{1}{2}\nu_n}\otimes L^{2d|2n}_{\Lambda}&=&\bigoplus_{\kappa\in I_\lambda}  L^{2d|2n}_{\Lambda-\kappa-\sigma(\kappa)\epsilon_d+\omega_d-\frac{1}{2}\nu_n}\\
L^{2d|2n}_{\omega_{d-1}-\frac{1}{2}\nu_n}\otimes L^{2d|2n}_{\Lambda}&=&\bigoplus_{\kappa\in I_\lambda}  L^{2d|2n}_{\Lambda-\kappa+\sigma(\kappa)\epsilon_{d}+\omega_{d-1}-\frac{1}{2}\nu_n}
\end{eqnarray*}
with $\sigma\left(\sum_j i_j\epsilon_j\right)$ equal to $0$ (respectively $1$) if an even (respectively odd) number of $i_j$ is non-zero.
\end{theorem}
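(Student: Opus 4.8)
The plan is to perform the whole computation in our choice of positive roots and to pass to the standard system only at the end, exactly as was done when Theorem \ref{decomp2d} was reduced to the identities of type \eqref{2ndchoiceeasyprod}. By equation \eqref{KL} the hypothesis $\Lambda=\sum_{j=1}^a k_j\epsilon_j+a\nu_n$ with $k_j\ge 1$ means $L_\Lambda^{m|2n}=K_{\hat\lambda}^{m|2n}$ for $\hat\lambda=\sum_{j=1}^a(k_j+n)\epsilon_j$, so every $\mathfrak{so}(m)$-label $\tilde k_j:=k_j+n$ exceeds $n$. I would first reduce the three displayed statements to the single $K$-identity
\begin{eqnarray*}
\mS_{m|2n}\otimes K_{\hat\lambda}^{m|2n}&=&\bigoplus_{\kappa\in I_\lambda}K_{\hat\lambda-\kappa+\omega_d-\frac{1}{2}\nu_n}^{m|2n}
\end{eqnarray*}
for $m=2d+1$, together with its $\mS^{\pm}_{2d|2n}$-analogues for $m=2d$. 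Here one checks that the set $I_{\hat\lambda}$ attached to $\hat\lambda$ equals the set $I_\lambda$ of \eqref{setweightsclass} attached to $\lambda=\sum_{j=1}^a k_j\epsilon_j$, because the defining inequalities of \eqref{setweightsclass} only constrain the differences $k_j-k_{j+1}$, which are unaffected by the shift $k_j\mapsto k_j+n$, while the remaining boundary inequalities are automatic once $k_j\ge 1$.

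Next I would pin down the admissible primitive weights. Applying Theorem \ref{posmax} to $K_{\hat\lambda}^{m|2n}$ and using that $\hat\lambda$ has no $\delta$-component (all $\kappa_i=0$) forces the $\delta$-shift $\rho=\sum l_i\delta_i$ to vanish for $m=2d+1$ and to be at most $\delta_n$ for $m=2d$. Hence the only candidate primitive weights are $\hat\lambda-\kappa+\omega_d-\frac{1}{2}\nu_n$ with $\kappa\in I_\lambda$, each occurring with multiplicity at most one by Lemma \ref{highestnotzero}. For $m=2d$ the remark following Theorem \ref{posmax} shows that the parity of $\kappa$ decides whether $\kappa+\rho$ lives in $\mS^+_{2d|2n}$ or in $\mS^-_{2d|2n}$; converting the half-spinor labels $\omega_d$ and $\omega_{d-1}$, which differ by $\epsilon_d$, produces exactly the $\mp\sigma(\kappa)\epsilon_d$ corrections in the statement.

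With this finite candidate list fixed, complete reducibility should follow from Corollary \ref{ThmComRed}, for which it suffices that the quadratic Casimir separates the candidate weights. Writing $\nu_\kappa=\hat\lambda-\kappa+\omega_d-\frac{1}{2}\nu_n$ and computing $\langle\nu_\kappa,\nu_\kappa+2\rho\rangle$ as in Section \ref{criterion} (the same mechanism as in Lemma \ref{classcr2}), the hypothesis $\tilde k_j>n$ should make all these eigenvalues pairwise distinct. I expect this Casimir computation, and in particular ruling out collisions between distinct $\kappa,\kappa'\in I_\lambda$ in the regime $k_j\ge 1$, to be the main technical obstacle: it is precisely the point where the exceptional equality $k+l=2+2n-2d$ of Lemma \ref{classcr2} is excluded by the uniform condition $k_j\ge 1$.

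It remains to produce the matching lower bound, namely a genuine primitive vector at each $\nu_\kappa$. Restricting to $\mathfrak{so}(m)\oplus\mathfrak{sp}(2n)$ the tensor product is completely reducible (Section \ref{sectionrestr}), and combining the branching \eqref{decompSp} of $\mS_{m|2n}$ with the classical multiplicity-free decomposition of Theorem \ref{classdecomp} shows that the top $\mathfrak{sp}(2n)$-spinor isotypic component carries exactly the $\mathfrak{so}(m)$-highest weights $\hat\lambda-\kappa+\omega_d$, $\kappa\in I_\lambda$, each once. I would then verify that the associated $\mathfrak{so}(m)\oplus\mathfrak{sp}(2n)$-maximal vector is also annihilated by the odd simple root vector $X_{\alpha_d}$, the only positive simple generator not controlled by the even part, so that it is $\osp$-primitive; alternatively the same count can be forced from Theorem \ref{dimensequal} by inducting on the $\nu_\kappa$ from the top downward and computing the codimension of $\mathfrak{n}^-\cdot W$ on each weight space. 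This yields one primitive vector per $\kappa\in I_\lambda$, matching the upper bound, and Corollary \ref{ThmComRed} then gives the stated direct sum. Finally, equation \eqref{KL} together with the odd-reflection bookkeeping of Section \ref{sectionrefl} converts the $K$-decomposition into the three displayed identities, the two $m=2d$ statements differing only through the parity factor $\sigma(\kappa)$ identified above.
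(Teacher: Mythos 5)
Your reduction to the $K$-system via \eqref{KL}, the identification $I_{\hat\lambda}=I_\lambda$ for $\hat\lambda=\sum_{j=1}^a(k_j+n)\epsilon_j$, and the restriction of the candidate primitive weights through Theorem \ref{posmax} and Lemma \ref{highestnotzero} (including the parity bookkeeping that produces $\sigma(\kappa)$ for $m=2d$) all agree with the paper. The proof breaks at exactly the point you flag as the main technical obstacle: the quadratic Casimir does \emph{not} separate the candidate weights under the hypothesis $k_j\ge 1$, so Corollary \ref{ThmComRed} cannot be invoked along this route. Indeed, for $m=2d+1$ and two candidates $\nu_\kappa=\hat\lambda-\kappa+\omega_d-\frac{1}{2}\nu_n$, $\nu_{\kappa'}=\hat\lambda-\kappa'+\omega_d-\frac{1}{2}\nu_n$ with $\kappa=\sum_ji_j\epsilon_j$ and $\kappa'=\sum_ji'_j\epsilon_j$, a direct computation gives $\langle\nu_\kappa,\nu_\kappa+2\rho\rangle-\langle\nu_{\kappa'},\nu_{\kappa'}+2\rho\rangle=\frac{1}{2}\sum_j(i'_j-i_j)c_j$ with $c_j=2(k_j+n)+m-2j$. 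Now take $\mathfrak{osp}(9|2n)$ (so $d=4$), $a=4$, $k_1=2n+4$ and $k_2=k_3=k_4=1$. Then $c_1=6n+15$, $c_2=2n+7$, $c_3=2n+5$, $c_4=2n+3$, hence $c_1=c_2+c_3+c_4$ and the Casimir eigenvalues at $\kappa=\epsilon_1$ and $\kappa'=\epsilon_2+\epsilon_3+\epsilon_4$ coincide. Both weights do belong to $I_\lambda$: here $\lambda_1=2n+3$, $\lambda_2=\lambda_3=0$, $\lambda_4=2$, while the respective $\mu_j$'s are $(1,0,0,0)$ and $(-1,0,0,2)$, so all inequalities of \eqref{setweightsclass} hold. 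Thus such collisions are unavoidable once $a\ge 3$, the condition $k_j\ge 1$ does not prevent them, and your argument for complete reducibility cannot be completed.

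What the paper uses instead of the Casimir, and what is missing from your proposal, is a structural fact: because $\Lambda$ carries the full $a\nu_n$-component, all candidate primitive weights differ from one another by pure $\epsilon$-weights, i.e.\ by $\mathfrak{so}(m)$-weights. In the \emph{standard} root system the only simple roots involving $\delta$'s are $\beta_1,\dots,\beta_n$, and any monomial in negative root vectors whose total weight is a pure $\epsilon$-weight must therefore consist exclusively of negative $\mathfrak{so}(m)$-root vectors. Hence if one candidate primitive vector lay in $\cU(\mg)$ applied to another, it would lie in $\cU(\mathfrak{so}(m))$ applied to it; this is impossible because the tensor product is completely reducible as an $\mathfrak{so}(m)\oplus\mathfrak{sp}(2n)$-module and in an irreducible constituent the only maximal vectors are highest weight vectors. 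This verifies the hypothesis of Corollary \ref{ThmComRed} with no eigenvalue computation at all. The same observation is what makes the existence step work, and your version of that step is also flawed: the $\mathfrak{so}(m)\oplus\mathfrak{sp}(2n)$-maximal vector $w_0^+(\kappa)$ is in general \emph{not} annihilated by the odd simple root vector $X_{\alpha_d}$ (compare the primitive vectors constructed in the proof of Theorem \ref{decomp2d}, which are long sums of tensor products, not single products of classical maximal vectors). The paper never needs this claim; it shows, again via the pure-$\epsilon$-weight argument, that $w_0^+(\kappa)$ cannot lie in a summand of the already-established decomposition generated by a primitive vector of a different weight, so $w_0^+(\kappa)\notin\mathfrak{n}^-\cdot W$, and Theorem \ref{dimensequal} then yields a primitive vector at every $\nu_\kappa$ with $\kappa\in I_\lambda$.
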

\begin{proof}
First we consider the case $m=2d+1$. By Theorem \ref{changeroot}, in our choice of root system the tensor product is given by $K^{2d+1|2n}_{\omega_d-\frac{1}{2}\nu_n}\otimes K^{2d+1|2n}_{\sum_{j=1}^a(k_j+n)\epsilon_j}$. Theorem \ref{posmax} implies that the weights of the possible primitive vectors are given by $\sum_{j=1}^a(k_j+n)\epsilon_j-\kappa+\omega_d-\frac{1}{2}\nu_n$ with $\kappa\in I_{\sum_{j=1}^a(k_j+n)\epsilon_j}=I_\lambda$. Even without complete reducibility it is possible to conclude that the possible weights of primitive vectors in the standard root system are given by the technique of odd reflections, the resulting weights are
\[\sum_{j=1}^ak_j\epsilon_j-\kappa+\omega_d+a\nu_n-\frac{1}{2}\nu_n.\]
Now if $y^+\in\cU(\mg)\cdot x^+$ for two such primitive vectors $x^+$ and $y^+$, with $\mg=\mathfrak{osp}(2d+1|2n)$, there is an element $g\in \cU(\mathfrak{n}^-)$ such that $y^+=gx^+$. However, the difference in the weights between two primitive vectors is always an $\mathfrak{so}(2d+1)$-weight. The structure of the positive simple roots for the standard root system (the $\beta_k$'s in Section \ref{sectionroot}) implies that $g\in\cU(\mathfrak{so}(2d+1))$. Since the tensor product is completely reducible as an $\mathfrak{so}(2d+1)\oplus\mathfrak{sp}(2n)$-representation this is impossible.

Corollary \ref{ThmComRed} then implies that the tensor product is completely reducible, so
\begin{eqnarray}
\label{partdecomp}
L^{2d+1|2n}_{\omega_d-\frac{1}{2}\nu_n}\otimes L^{2d+1|2n}_{\Lambda}&=&\bigoplus_{\kappa\in I_\lambda^\ast}  L^{2d+1|2n}_{\Lambda-\kappa+\omega_d-\frac{1}{2}\nu_n}
\end{eqnarray}
holds for some subset $I_\lambda^\ast\subset I_\lambda$. 

Now we prove that $I_\lambda^\ast=I_\lambda$. For each $\kappa\in I_\lambda$ there is a vector 
\[w_0^+(\kappa)\in \left(L^{2d+1|0}_{\omega_d}\otimes L^{2d+1|0}_{\lambda}\right) \times\left(L^{0|2n}_{-\frac{1}{2}\nu_n}\otimes L^{0|2n}_{a\nu_n}\right) \subset L^{2d+1|2n}_{\omega_d-\frac{1}{2}\nu_n}\otimes L^{2d+1|2n}_{\Lambda},\]
of weight $\sum_{j=1}^ak_j\epsilon_j-\kappa+\omega_d +a\nu_n-\frac{1}{2}\nu_n$, corresponding to the $\mathfrak{so}(2d+1)$-maximal vector in $L^{2d+1|0}_{\omega_d}\otimes L^{2d+1|0}_{\lambda}$ of weight $\sum_{j=1}^ak_j\epsilon_j-\kappa+\omega_d$ in Theorem \ref{classdecomp} and the highest weight vector of $L^{0|2n}_{-\frac{1}{2}\nu_n}\otimes L^{0|2n}_{a\nu_n}$. This is a maximal vector for $\mathfrak{so}(2d+1)\oplus\mathfrak{sp}(2n)$. We can prove that this vector can not be inside an irreducible representation in equation \eqref{partdecomp} generated by a maximal vector with a weight different from $\sum_{j=1}^ak_j\epsilon_j-\kappa+\omega_d +a\nu_n-\frac{1}{2}\nu_n$. This is again a consequence of the fact that the difference of weights is always an $\mathfrak{so}(2d+1)$-weight and the complete reducibility. So for each $\kappa\in I_\lambda$ there is a vector which is not generated by $\mathfrak{n}^-$-action. Theorem \ref{dimensequal} then implies that the proposed decomposition holds.

The proof for $m=2d$ is similar.
\end{proof}

In our choice of root system, Theorem \ref{finalthmdecomp} is rewritten as follows.
\begin{theorem}
\label{finaldecomp2}
Consider $\mu$ an integral dominant $\mathfrak{so}(m)$-weight of the form $\mu=\sum_{j=1}^a(k_j+n)\epsilon_j$ with $k_j\ge 1$ integers and $a\le d$ and $m=2d$ implies $a\not=d-1$. The following decomposition into irreducible $\mathfrak{osp}(m|2n)$-representations holds:
\begin{eqnarray*}
K^{2d+1|2n}_{\omega_d-\frac{1}{2}\nu_n}\otimes K^{2d+1|2n}_{\mu}&=&\bigoplus_{\kappa \in I_\mu}  K^{2d+1|2n}_{\mu-\kappa+\omega_d-\frac{1}{2}\nu_n}\\
K^{2d|2n}_{\omega_d-\frac{1}{2}\nu_n}\otimes K^{2d|2n}_{\mu}&=&\bigoplus_{\kappa \in I_\mu}  K^{2d|2n}_{\mu-\kappa+\omega_d-\frac{1}{2}\nu_n-\sigma(\kappa)\delta_n}\\
K^{2d|2n}_{\omega_d+\nu_{n-1}-\frac{3}{2}\nu_n}\otimes K^{2d|2n}_{\mu}&=&\bigoplus_{\kappa \in I_\mu}  K^{2d|2n}_{\mu-\kappa+\omega_d+\nu_{n-1}-\frac{3}{2}\nu_n+\sigma(\kappa)\delta_n}
\end{eqnarray*}
with $I_\mu$ given in equation \eqref{setweightsclass} and $\sigma(\kappa)$ as in Theorem \ref{finalthmdecomp}.
\end{theorem}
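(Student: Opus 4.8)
The plan is to recognise that Theorem \ref{finaldecomp2} is simply Theorem \ref{finalthmdecomp} re-expressed in our non-standard root system of Section \ref{sectionroot}, so that its entire content is a translation via odd reflections. First I would match the two left-hand sides. The spinor identifications of Section \ref{sectionsuperspin} give $\mS_{2d+1|2n}=K^{2d+1|2n}_{\omega_d-\frac{1}{2}\nu_n}=L^{2d+1|2n}_{\omega_d-\frac{1}{2}\nu_n}$ and, for $m=2d$, $\mS^{+}_{2d|2n}=K^{2d|2n}_{\omega_d-\frac{1}{2}\nu_n}$, $\mS^{-}_{2d|2n}=K^{2d|2n}_{\omega_d+\nu_{n-1}-\frac{3}{2}\nu_n}$, while equation \eqref{KL} applied to $\mu=\sum_{j=1}^a(k_j+n)\epsilon_j$ yields $K^{m|2n}_{\mu}=L^{m|2n}_{\Lambda}$ with $\Lambda=\sum_{j=1}^ak_j\epsilon_j+a\nu_n$; here the integer $a$ of Theorem \ref{changeroot} is exactly the present $a$, since $k_j+n>n$ for $j\le a$ and the remaining components vanish. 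Thus both tensor products appearing on the left coincide with those in Theorem \ref{finalthmdecomp}.

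Next I would verify that the two index sets agree, $I_\mu=I_\lambda$ for $\lambda=\sum_{j=1}^ak_j\epsilon_j$, with $I$ as in \eqref{setweightsclass}. Passing from $\lambda$ to $\mu$ only raises the $a$-th fundamental coefficient from $k_a$ to $k_a+n$ and leaves every other coefficient unchanged; since the defining condition at position $a$ reads $c_a\le\lambda_a$ with $\lambda_a=k_a\ge1$, and a $0/1$-weight always has $c_a\le1$, this condition is vacuous for both $\lambda$ and $\mu$, so the two sets coincide and the direct sums run over the same $\kappa$.

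It then remains to check, for each $\kappa\in I_\mu$, that the odd reflections in the order \eqref{orderrefl} carry the stated $K$-highest weight of each summand to the stated $L$-highest weight of Theorem \ref{finalthmdecomp}. For $m=2d+1$ this is precisely the computation already performed inside the proof of that theorem (which in fact did all its primitive-vector and complete-reducibility analysis in our root system), where $\mu-\kappa+\omega_d-\frac{1}{2}\nu_n$ was shown to reflect to $\Lambda-\kappa+\omega_d-\frac{1}{2}\nu_n$. Concretely one tracks the pairings $\langle\,\cdot\,,\epsilon_j-\delta_i\rangle$ coming from \eqref{weightinprod}: a reflection fires exactly when this pairing is nonzero, and the cumulative effect transfers the $\epsilon$-mass carried by the $\theta$-variables of $\mS$ onto the $\delta$-directions, which is what turns the finite $\mathfrak{so}(m)$-type shift into a $\nu$-type shift.

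The delicate point, and the main obstacle, is the even case $m=2d$, where the two chiralities $\mS^{\pm}_{2d|2n}$ and the parity $\sigma(\kappa)$ intervene: the standard-system summand carries the term $-\sigma(\kappa)\epsilon_d$ whereas our summand carries $-\sigma(\kappa)\delta_n$, so the reflection chain must be shown to interchange exactly these two terms. Whether the final reflection $\epsilon_d-\delta_n$ fires depends on the parity of the number of nonzero entries of $\kappa$: for $\sigma(\kappa)=1$ the $\epsilon_d$-coefficient pairs nontrivially and the reflection moves the defect onto $\delta_n$, for $\sigma(\kappa)=0$ it does not. In each case one must confirm that the result is a consistent dominant highest weight in the target system and, in particular, that it lands in the correct chirality ($\omega_d$ versus $\omega_{d-1}$) matching $\mS^{\pm}$. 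I would dispose of this by reducing to the classical shape of Theorem \ref{classdecomp} together with the consistency conventions for $\mathfrak{osp}(2d|2n)$-weights fixed before Theorem \ref{changeroot}, so that no representation-theoretic input beyond the odd-reflection calculus is required.
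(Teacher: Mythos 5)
Your proposal is correct and follows essentially the same route as the paper: Theorem \ref{finaldecomp2} is presented there without a separate proof, precisely as the odd-reflection rewriting of Theorem \ref{finalthmdecomp}, whose own proof already works in the non-standard root system (identifying the primitive weights $\mu-\kappa+\omega_d-\tfrac{1}{2}\nu_n$ and reflecting them to $\Lambda-\kappa+\omega_d-\tfrac{1}{2}\nu_n$) and already notes $I_{\mu}=I_\lambda$. Your fleshing-out of the even case $m=2d$, where the firing of $\epsilon_d-\delta_n$ exchanges the $\sigma(\kappa)\epsilon_d$ and $\sigma(\kappa)\delta_n$ defects, is exactly the ``similar'' computation the paper leaves implicit; the only slight imprecision is that for $a=d$ the fundamental coefficient of a $0/1$-weight can equal $2$ rather than being bounded by $1$, but the condition is still vacuous since then $\lambda_d=2k_d\ge 2$.
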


For the case $B(0|n)=\mathfrak{osp}(1|2n)$, theorems \ref{finalthmdecomp} and \ref{finaldecomp2} are clearly empty.

\section{Conclusion}
\label{concl}
The main results of this paper are the classification and realization of the completely pointed $\osp$-modules in Theorem \ref{supercompp}, Definition \ref{Xspin1} and Definition \ref{Xspin2} and the decomposition of the tensor products (which are e.g. useful for invariant differential operators in superspace) in Theorems \ref{decomp2d}, \ref{tensornotcr} and \ref{finalthmdecomp}. The results on completely pointed modules are generalizations of the results for $\mathfrak{sp}(2n)$ in \cite{MR1297597}, while the tensor products are mainly generalizations of results for $\mathfrak{so}(m)$. To obtain these results the insights on tensor product representations of semisimple Lie superalgebras in Theorem \ref{dimensequal} and Corollary \ref{ThmComRed} were very important. These considerations also give insight into the cases where the tensor product is not completely reducible, see Theorem \ref{Simplecasencr}.

Most of the tensor products we studied were completely reducible. Calculations such as Lemma \ref{classcr2} suggest that there are more irreducible highest weights that lead to a completely reducible representation. To obtain these, Theorem \ref{posmax} and Corollary \ref{ThmComRed} will be usefull, but the reasoning in Theorem \ref{finalthmdecomp} can no longer be used. Also the cases that are not completely reducible are interesting. Theorem \ref{Simplecasencr} for arbitrary representations of arbitrary Lie superalgebras is not complete in the sense that no statement is derived on the irreducibility of the representation $P/V$. For the particular case studied in Theorem \ref{tensornotcr} we will obtain in a forthcoming paper that $P/V$ is in fact irreducible as a side-result of an application of the tensor products. It is an interesting question whether this is a general property.

Other similar interesting representations to study would be the higher spinor representations for $\osp$ (which appear in the tensor products studied in this paper) and their tensor product with the fundamental representation $L_{\delta_1}^{m|2n}$. This would be a generalization to $\osp$ of the results in \cite{MR2286881} for $\mathfrak{sp}(2n)$ or in \cite{MR0482879} for $\mathfrak{so}(m)$.

Using the results of tensor product decompositions, the classification of $\mathfrak{osp}(m|2n)$-representations with bounded weight-multiplicities can then be addressed. As noted in corollary \ref{bounded}, the representations that appear in the tensor products have bounded weight-multiplicities. Special attention needs to be considered for the case where the tensor product is not completely reducible.
 The classical results for $\mathfrak{sp}(2n)$ are given in \cite{MR1615943}. 
 
The results in Theorem \ref{decomp2d} give the necessary representation-theoretical background to construct the super Dirac operator along the lines of the classical case in \cite{MR0223492} as well as a description of the kernel as an $\osp$-representation. This will be studied explicitly in a forthcoming paper.
\subsection*{Acknowledgment}
The author would like to thank Joris Van der Jeugt, Ruibin Zhang and Vladimir Soucek for many interesting remarks and discussions.

The author thanks Tom Ferguson and Dimitar Grantcharov for pointing out that Theorem 8 in the previous version did not cover the case $\mathfrak{osp}(1|2n)$.

%%   No changes. Try to follow several easy rules while typesetting your
%% bibliography: 1) Paper name should be itacised, 2) use comma (,) as
%% separator, 3) issue number should be boldface.


\begin{thebibliography}{ASM}


\bibitem{MR1632811}
G. Benkart, C.L. Shader, A. Ram,
\newblock{Tensor product representations for orthosymplectic Lie superalgebras,}
\newblock{J. Pure Appl. Algebra} {130} (1998), 1--48. 

\bibitem{BG}
J.N. Bernstein, S.I. Gelfand,
\newblock{Tensor products of finite and infinite dimensional representations of semisimple Lie algebras,}
\newblock{Comp. Math., Vol 41, (1980), 245--285}

\bibitem{MR2782791}
F. Brackx, D. Eelbode, L. Van de Voorde,
\newblock{Higher spin Dirac operators between spaces of simplicial monogenics in two vector variables,}
\newblock Math. Phys. Anal. Geom. 14 (2011), 1--20. 


\bibitem{MR1297597}
D.J. Britten, J. Hooper, F.W. Lemire, 
\newblock{Simple $C_n$ modules with multiplicities 1 and applications,}
\newblock{Canad. J. Phys.} {72} (1994), 326--335. 

\bibitem{MR1615943}
D.J. Britten, F.W. Lemire, 
\newblock{On modules of bounded multiplicities for the symplectic algebras,}
\newblock{Trans. Amer. Math. Soc. 351 (1999), 3413--3431. }

\bibitem{MR2028498}
S.J. Cheng, R.B. Zhang, 
\newblock{Howe duality and combinatorial character formula for orthosymplectic Lie superalgebras},
\newblock{Adv. Math.} {182} (2004), 124--172.

\bibitem{OSpHarm}
{K. Coulembier},
\newblock{The orthosymplectic supergroup in harmonic analysis,}
\newblock{J. Lie Theory 23 (2013) 55-83.}

\bibitem{Joseph}
K. Coulembier, P. Somberg, V. Soucek,
\newblock{Joseph ideals for $\osp$,}
\newblock{Int. Math. Res. Not. 2013; doi: 10.1093/imrn/rnt074.}

\bibitem{MR0482879}
H.D. Fegan,
\newblock{Conformally invariant first order differential operators,}
\newblock{Quart. J. Math. Oxford (2) 27 (1976), 371--378. }

\bibitem{MR1013330}
S.L. Fernando,
\newblock{Lie algebra modules with finite-dimensional weight spaces. I,}
\newblock{Trans. Amer. Math. Soc. 322 (1990), 757--781. }

\bibitem{MR1773773}
{L. Frappat, A. Sciarrino, P. Sorba},
\newblock {\it Dictionary on {L}ie algebras and superalgebras},
\newblock Academic Press Inc., San Diego, CA, 2000.
  
\bibitem{MR051963}
{V. Kac,}
\newblock{Representations of classical Lie superalgebras}, 
\newblock{Lecture Notes in Math.} {676}, Springer, Berlin, 1978.

\bibitem{MR1327543}
V. Kac, M. Wakimoto,
\newblock{Integrable highest weight modules over affine superalgebras and number theory,}
\newblock{Lie theory and geometry, 415--456, Progr. Math., 123, BirkhŠuser Boston, Boston, MA, 1994. }

\bibitem{MR1738448}
A. Kor\'anyi, H.M. Reimann,
\newblock{Equivariant first order differential operators on boundaries of symmetric spaces,}
\newblock{Invent. Math. 139 (2000), 371--390.}

\bibitem{MR0400304}
B. Kostant,
\newblock{\it Symplectic spinors,}
\newblock{ Symposia Mathematica, Vol. XIV, pp. 139--152.}
Academic Press, London, 1974. 

\bibitem{MR2286881}
S. Kr\'ysl,
\newblock{Decomposition of a tensor product of a higher symplectic spinor module and the defining representation of sp(2n,C),}
J. Lie Theory 17 (2007), no. 1, 63--72. 

\bibitem{MR1037401}
K. Nishiyama,
\newblock{Oscillator representations for orthosymplectic algebras,}
\newblock J. Algebra 129 (1990), no. 1, 231--262. 

\bibitem{MR1401053}
O. Mathieu,
\newblock{On the dimension of some modular irreducible representations of the symmetric group,}
\newblock{Lett. Math. Phys. 38 (1996), no. 1, 23--32. }

\bibitem{MR0660015}
T. Palev, 
\newblock{Para-Bose and para-Fermi operators as generators of orthosymplectic Lie superalgebras,}
\newblock{J. Math. Phys. 23 (1982), no. 6, 1100--1102. }

\bibitem{MR1201236}
I. Penkov, V. Serganova,
\newblock{Representations of classical Lie superalgebras of type I,}
\newblock{Indag. Math. (N.S.) 3 (1992), no. 4, 419--466. }

\bibitem{MR0424886}
M. Scheunert, W. Nahm, V. Rittenberg,
\newblock{Graded Lie algebras: Generalization of Hermitian representations,}
\newblock J. Mathematical Phys. 18 (1977), 146--154. 

\bibitem{MR0223492}
E.M. Stein, G. Weiss,
\newblock{Generalization of the Cauchy-Riemann equations and representations of the rotation group,}
\newblock{Amer. J. Math. 90 (1968), 163--196. }

\bibitem{MR2395482}
R.B. Zhang, 
\newblock { Orthosymplectic Lie superalgebras in superspace analogues of quantum  Kepler problems},
\newblock {Comm. Math. Phys.} {280} (2008), 545--562.




\end{thebibliography}
\end{document}